%% file: main.tex
\documentclass[11pt]{article}

\usepackage[T1]{fontenc}
\usepackage{lmodern}

\usepackage{mathtools}
\usepackage{amssymb}
\usepackage{amsthm}

\usepackage{graphicx}
\usepackage{tikz}
\usepackage{caption}

\usepackage{microtype}

\usepackage[numbers,sort&compress]{natbib}

\usepackage{hyperref}
\usepackage{cleveref}

\usetikzlibrary{decorations.markings}

\theoremstyle{plain}
\newtheorem{theorem}{Theorem}[section]
\newtheorem{proposition}[theorem]{Proposition}
\newtheorem{lemma}[theorem]{Lemma}
\newtheorem{corollary}[theorem]{Corollary}

\theoremstyle{definition}
\newtheorem{definition}[theorem]{Definition}
\newtheorem{conjecture}[theorem]{Conjecture}

\theoremstyle{remark}
\newtheorem{remark}[theorem]{Remark}

\newcommand{\energy}{\mathcal{B}}                 
\newcommand{\excess}{\Lambda}                     
\newcommand{\disk}{\mathbb{D}}                    
\newcommand{\emb}{\mathcal{E}}                     
\newcommand{\minen}{m}                            
\newcommand{\turning}{\mathcal{T}}                
\newcommand{\sweepdens}{\omega}                   
\newcommand{\econn}{e_{\mathcal{A}}}              
\newcommand{\conncl}{\mathcal{A}}                 
\newcommand{\dgconst}{S}                          
\newcommand{\dipconst}{c_{*}}                     
\newcommand{\cstar}{c_{1}}                        
\newcommand{\quartrad}{\mathrm{q}}                
\newcommand{\dipcorr}{\mathrm{g}}                 
\newcommand{\ctwo}{c_{2}}                         
\newcommand{\bootstrap}{K}                        
\newcommand{\depthconst}{a_{*}}                   
\newcommand{\connarc}{\xi}                        
\newcommand{\bterm}{\mathrm{b}}                   

\DeclareMathOperator{\ind}{ind}
\DeclareMathOperator{\dist}{dist}
\DeclareMathOperator{\sgn}{sgn}
\DeclarePairedDelimiter{\abs}{\lvert}{\rvert}
\DeclarePairedDelimiter{\norm}{\lVert}{\rVert}

\title{Confined elastic wires of large length:\\ the sharp second-order asymptotics}
\author{L. Mugnai}
\date{\today}

\begin{document}

\maketitle

\begin{abstract}
We study the least bending energy $\minen_L$ of a closed embedded wire of prescribed length $L$ confined to the
closed unit disk.
It is classical that the bending energy of any closed curve in the disk is at least its length, with equality
only for multiply covered unit circles.
Such a circle is the energy limit of a stack of disjoint embedded circles, but not of a single embedded loop,
and the size of the resulting defect $\minen_L-L$ has remained open.
We prove that
\[
  \minen_L \;=\; L+\cstar\,L^{4/9}+O\bigl(L^{1/3}\bigr)\qquad\text{as }L\to\infty ,
\]
with a sharp constant $\cstar=5.223049\ldots$ given in closed form by two one-dimensional model problems.
In particular $\minen_L-L\asymp L^{4/9}$, which improves the previously known upper bound of order
$\sqrt{L}$ and provides the first lower bound beyond the trivial one.

This paper was produced with substantial assistance from large language models; 
\cref{sec:ai} sets out in detail what they contributed and what I verified.\end{abstract}

\input{sec1-intro}
\input{sec2-heuristics}
\input{sec3-notation}
\input{sec4-upper}

\input{sec5-lower}

\input{sec6-remarks}
\input{sec7-tool-disclosure}
\appendix
\input{appA-computations}

\bibliographystyle{plainnat}
\bibliography{refs}

\end{document}

%% file: sec1-intro.tex
\section{Introduction}
\label{sec:intro}

A closed wire of length $L$ is confined to the closed unit disk $\overline{\disk}\subset\mathbb{R}^{2}$ and adopts
the shape minimising its Bernoulli--Euler bending energy.
Writing $\emb_{L}$ for the class of embedded closed curves of class $W^{2,2}$, parametrised by arclength on
$\mathbb{R}/L\mathbb{Z}$ and with image in $\overline{\disk}$, the quantity of interest is
\[
  \minen_{L}\ :=\ \inf_{\gamma\in\emb_{L}}\ \energy(\gamma),
  \qquad \energy(\gamma):=\int_{0}^{L}\kappa^{2}\,ds .
\]
For $L\le2\pi$ the minimiser is the circle of radius $L/2\pi$ and $\minen_{L}=4\pi^{2}/L$, which decreases with
$L$; at $L=2\pi$ the wire fills $\partial\disk$.
Beyond that length the wire must fold into the disk, and the behaviour of $\minen_{L}$ for $L\gg1$ is the subject
of this paper.

Two elementary bounds frame the question.
For any closed curve in $\overline{\disk}$ one has $\int_{0}^{L}\abs{\kappa}\,ds\ge L$, with equality only for a
multiply covered unit circle: this is Chakerian's inequality, proved in \cite{Chakerian1962} by an
integral-geometric argument following F\'ary \cite{Fary1950}, and again in \cite{Chakerian1964} by the
integration of $\frac{d}{ds}\langle\gamma,\tau\rangle$ that we repeat in \cref{prop:budgets} below; see also
\cite[Prop.~6.1]{Sullivan2008}.
Cauchy--Schwarz then gives
\begin{equation}
  \label{eq:trivialbound}
  \energy(\gamma)\ \ge\ \frac{1}{L}\Bigl(\int_{0}^{L}\abs{\kappa}\,ds\Bigr)^{2}\ \ge\ L .
\end{equation}
Equality forces $\abs{\kappa}\equiv1$ and $r\equiv1$, that is, a unit circle covered $k=L/2\pi$ times, a closed
curve only when $L\in2\pi\mathbb{N}$.
That this curve is not embedded is not, by itself, the obstruction: it is the limit, in energy,
of $k$ disjoint embedded circles of radii close to $1$, that is, of a union of $k$ Jordan curves, and a stack
of independent wires would realise \cref{eq:trivialbound} up to $o(1)$.
What it is not is the $C^{1}$ limit of a \emph{single} embedded loop --- an embedded closed plane curve has
turning number $\pm1$, which passes to $C^{1}$ limits, whereas the $k$-fold circle has turning number $k$
(among the closed elasticae, classified in \cite{LangerSinger1984,AvvakumovKarpenkovSossinsky2013}, only the
once covered circle is such a limit \cite[Lem.~2.3]{Wojtowytsch2018}).
The defect in \cref{eq:trivialbound} is therefore the price of connectedness: it is incurred because the wire is
one embedded loop rather than a stack of independent circles, and its size is what we determine.

\begin{theorem}
  \label{thm:sharpmain}
  There are $C$ and $L_{0}$ such that, for all $L\ge L_{0}$,
  \[
    \minen_{L}\ =\ L+\cstar\,L^{4/9}+O\bigl(L^{1/3}\bigr),
    \qquad
    \cstar=\frac95\bigl(\dgconst^{2}\bigr)^{5/9}\Bigl(\frac{5\dipconst}{8\pi}\Bigr)^{4/9}=5.223049\ldots,
  \]
  where
  \[
    \dgconst=\int_{0}^{\pi}\sqrt{\sin t}\;dt=\frac{4\sqrt2\,\pi^{3/2}}{\Gamma(1/4)^{2}},
    \qquad
    \dipconst=\frac{32}{15}\,72^{1/4}.
  \]
  Moreover $\minen_{L}\le L+\cstar L^{4/9}+\ctwo L^{2/9}+C$ with
  $\ctwo=-1.4844067\ldots$
\end{theorem}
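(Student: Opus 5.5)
We prove the two-sided estimate by matching an explicit construction with a lower bound built on the budget identity of \cref{prop:budgets}. The structure of $\cstar$ suggests an optimisation over one scale of the form $A\epsilon^{-a}+B\epsilon^{b}$. The factor $\frac95$ and the exponents $5/9,4/9$ arise from minimising $\alpha x^{5}+\beta x^{-4}$-type expressions after rescaling by $L$. The two one-dimensional model problems are a \emph{depth/gap} problem, whose value gives $\dgconst=\int_0^\pi\sqrt{\sin t}\,dt$, and a \emph{dipole/connector} problem, whose value is $\dipconst$.

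\textbf{Heuristic picture.} The optimal curve is a tightly packed spiral of roughly $k\approx L/2\pi$ turns hugging $\partial\disk$. The $j$-th turn sits at radius $r_j=1-\delta_j$. The excess energy has two sources.
\begin{enumerate}
\item The shortfall of the winding: $\int\kappa^2-L\approx\int(\kappa-1)^2$ plus the radial defect. Since $\kappa\approx1/r$ on a circle of radius $r$, this costs roughly $\sum_j 2\pi\delta_j^{2}$ per turn.
\item A single localized connector, a ``dipole'' turning region where the innermost strand returns to the outer one. This costs $\dipconst$ times a power of the gap between strands.
\end{enumerate}
Embeddedness forces consecutive turns to be separated, and the connector must cross $k$ strands. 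Balancing the radial packing cost (the $\dgconst$ problem, with the $\sqrt{\sin t}$ profile coming from the optimal distribution of the $\delta_j$ around the crossing angle) against the connector cost (the $\dipconst$ problem, the $72^{1/4}$ coming from an optimal quartic Euler--Lagrange profile) yields the scale $L^{4/9}$.

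\textbf{Upper bound.} We build an explicit competitor in $\emb_L$.
\begin{enumerate}
\item Take concentric near-circles with radial spacing given by the optimiser of the depth/gap problem.
\item Join them through a single transition sector in which the strands are sheared by one gap, following the rescaled optimal dipole profile.
\item Insert the connector arc $\connarc$ returning from the innermost to the outermost turn.
\item Fix the length exactly to $L$ by a small dilation of the innermost turn.
\end{enumerate}
We then compute the energy with Taylor expansions (deferred to the appendix). Optimising the free scale gives $L+\cstar L^{4/9}$. Keeping the next order in the expansion yields $\ctwo L^{2/9}$, with the remaining errors $O(1)$. This part is routine though computation-heavy.

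\textbf{Lower bound.} Here we need $\minen_L\ge L+\cstar L^{4/9}-CL^{1/3}$. The plan has four steps.
\begin{enumerate}
\item Use \cref{prop:budgets}: integrating $\frac{d}{ds}\langle\gamma,\tau\rangle$ writes $\energy-L$ as $\int(\kappa-1)^2$ plus a nonnegative term controlling $\int(1-r)$, plus boundary terms. This gives a priori bounds $\energy-L\lesssim L^{4/9}$ for near-minimisers, hence most of the curve is $C^1$-close to the boundary circle.
\item Using the turning number $\pm1$ of an embedded loop, show there is a region where the tangent angle must undo $\approx 2\pi(k-1)$ of winding relative to a $k$-fold cover. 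Localise this to a ``turning'' set $\turning$, and bound its cost from below by the dipole model via a blow-up and compactness argument.
\item Embeddedness gives the strands in the annulus an ordering. Their radial deficits then satisfy a discrete constraint whose continuum relaxation is the $\dgconst$ problem. The sweep density $\sweepdens$ of strands over angles provides the one-dimensional variable.
\item Combine the two lower bounds. Both are expressed in terms of a common gap parameter, so minimising their sum over that parameter recovers $\cstar L^{4/9}$. The error terms from localisation and from cross-interaction between the connector and the packing are controlled at order $L^{1/3}$.
\end{enumerate}

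\textbf{Main obstacle.} The hardest step is the lower-bound decoupling: showing rigorously that any embedded near-minimiser splits into a packing part and a connector part whose costs add, with only $O(L^{1/3})$ loss. A priori the connector could spread out or be distributed over many small defects. I would first try a bootstrap: start from the crude bound, improve the localisation of $\turning$ iteratively, and use convexity of the model problems so that splitting the defect into several pieces is never cheaper (subadditivity in the wrong direction). I expect the $L^{1/3}$ error, rather than the sharper $L^{2/9}$, to be exactly the loss from this bootstrap.
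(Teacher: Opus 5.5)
There is a genuine gap, and it starts before the lower bound: you have misidentified the mechanism, and with it both model problems.

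\textbf{Packing and depth cost.} Packing the strands costs nothing at leading order. The wire has no thickness, so the pitch can be made negligible; the paper takes $p=\Delta^{5}$, at total cost $O(N^{2}p)$. Lying at depth $\delta$ costs linearly, about $4\pi\delta$ per turn. Your per-turn cost $2\pi\delta^{2}$ keeps only $\int(\kappa-1)^{2}$ and misses the dominant cross term $2\int(\kappa-1)\,ds$, which is the term $1-r^{2}=2h-h^{2}$ of \cref{eq:master2}.

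\textbf{Your competitor cannot be built.} An embedded closed curve has turning number $\pm1$, so an arc closing a one-sense spiral of $k$ turns must rotate its tangent by about $2\pi(k-1)$ in the reverse sense. On length $\ell$ its excess is therefore at least $(2\pi(k-1))^{2}/\ell-\ell$, which is far above $L^{4/9}$ unless $\ell\approx2\pi k$. In that case your ``connector'' is itself half of the wire, threading back out between the turns in the opposite sense.

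\textbf{The actual structure.} The signed angular sweep about the origin is $2\pi\ind_{\gamma}(0)\in\{0,2\pi\}$, while the unsigned sweep is $\approx L$. So two interleaved halves rotate in opposite senses, and the wire must reverse its sense of rotation near $\partial\disk$. This fold is the \emph{single} expensive object: a $\pi$-turn of the tangent within depth $\Delta$ costs at least $\dgconst^{2}/\Delta$, attained in the strip limit by the free elastica $\kappa\propto\sqrt{\sin\vartheta}$ (\cref{lem:fold,prop:head}). That, not a distribution of strand depths, is where $\int_{0}^{\pi}\sqrt{\sin t}\,dt$ comes from. What occurs $\approx L/2\pi$ times is the \emph{dip} each passage must make to get under the fold. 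Its least cost is $\dipconst\Delta^{5/4}$, given by the quartic obstacle profile (\cref{lem:quartic,lem:dipsharp}). The interior connector costs only $O(1)$.

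The constant confirms this reading: $\cstar L^{4/9}=\min_{\Delta}(a/\Delta+b\Delta^{5/4})=\frac95a^{5/9}(\frac54b)^{4/9}$ with $a=\dgconst^{2}$ and $b=\dipconst L/2\pi$. So it is $\dipconst$, not $\dgconst$, that carries the number of turns. Likewise $\ctwo$ comes from the $-3\int(\beta')^{2}$ correction to the dip cost (\cref{lem:dipcost}), which your construction does not contain.

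Your lower-bound plan therefore prices the wrong objects. Its steps~2--4 also lack the ideas that make the two costs add. A blow-up/compactness step would at best give an error $o(L^{4/9})$ with no rate, not $O(L^{1/3})$. What is needed is the following.
\begin{enumerate}
\item A fold on the \emph{outer envelope} (\cref{lem:env}), so that nothing lies between it and $\partial\disk$ (\cref{prop:uncovered}). Otherwise a wide, cheap fold shielded by other passages forces no dips.
\item A price for that fold against $\partial\disk$ alone, computed in the depth chart by Cauchy--Schwarz on the two halves on either side of the point of radial motion (\cref{prop:head}).
\item A blocking argument (\cref{lem:blocking}): an arc staying within depth $\Delta$ cannot sweep much more than $2\pi$ without meeting the barrier formed by the fold and the radial segment above it. Combined with the scarcity of sign alternations of $\sweepdens$ (\cref{prop:scarcity}), this produces $M\ge\frac{L}{2\pi}(1-CT)$ disjoint windows, each reaching depth $\Delta$ (\cref{lem:separation}).
\item The localised master identity (\cref{prop:masteridentity,prop:conversion}) together with the free-end calibration of \cref{lem:dipsharp}, to price each window. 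The prices are then added by the superadditivity \cref{eq:additivity}.
\end{enumerate}
In this scheme your worry about the defect fragmenting does not arise, since any additional fold only adds nonnegative charge.

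Finally, the $L^{1/3}$ is not a bootstrap loss; the bootstrap only restricts to $\excess\le\bootstrap L^{4/9}$ and yields $\Delta\asymp L^{-4/9}$. The $L^{1/3}$ is the collar loss $CT\asymp\Delta^{1/4}\asymp L^{-1/9}$ in the passage count, multiplied by the dip total $\asymp L^{4/9}$.
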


The defect is not a single term, and the theorem is the first step of a longer expansion.
The mechanism described in \cref{sec:heuristics} balances the cost $\asymp\delta^{-1}$ of one fold against the
cost of the $\asymp L$ dips of depth $\delta$ it forces, and the cost of a dip has an expansion in powers of
$\delta^{1/2}$; at the optimal depth $\delta\asymp L^{-4/9}$ the successive terms are of orders $L^{4/9}$,
$L^{2/9}$ and $L^{0}$, after which they vanish.
Adding the price of joining the two counter-rotating halves of the wire through the interior, which does not
grow with $L$, one is led to expect
\begin{equation}
  \label{eq:expansion}
  \minen_{L}\;=\;L+\cstar L^{4/9}+\ctwo L^{2/9}+c_{0}+o(1)\qquad(L\to\infty),
\end{equation}
in which $c_{0}$ collects the connector's contribution together with the $O(1)$ terms of the fold and of the
dips.
\Cref{thm:sharpmain} establishes the first two terms of \cref{eq:expansion}, and the coefficient of the third
from above only.

The two constants come from the two model problems of \cref{sec:heuristics}, and each is the value of an
explicit one-dimensional minimisation: $\dgconst^{2}$ is the least energy of a $\pi$-turn across a strip,
attained by a free elastica, and $\dipconst$ is the least cost of a unit dip, attained by a quartic profile.
The error $O(L^{1/3})$ comes entirely from the lower bound; the upper bound is accurate to $O(1)$ and already
exhibits the second term of \cref{eq:expansion}.
The same programme has been carried out at the opposite end of the range: Wojtowytsch
\cite[Thm.~1.1]{Wojtowytsch2018} shows that $\minen_{2\pi+\delta}=2\pi+\Theta\,\delta^{1/3}+o(\delta^{1/3})$ as
$\delta\downarrow0$, where the coefficient $\Theta\approx36.69$ is the value of an explicit one-dimensional
fourth-order obstacle problem with an integral constraint on the line \cite[Lem.~4.3]{Wojtowytsch2018}.
\Cref{thm:sharpmain} is the counterpart as $L\to\infty$, where two model problems rather than one are needed
because the fold and the dips are priced separately.
The constant $\ctwo$ is not obtained from below, and $c_{0}$ --- which contains, besides the $O(1)$ terms of
the fold and of the dips, the price of joining the two counter-rotating halves of the wire through the interior
of the disk --- not at all; \cref{sec:remarks} explains what is missing.

The radius of the container is fixed at $1$ for convenience only.
If $\minen^{R}_{L}$ denotes the same infimum for curves of length $L$ confined to a disk of radius $R$, then
$\gamma\mapsto\gamma/R$ is a bijection onto the competitors of length $L/R$ in the unit disk which multiplies
the bending energy by $R$, so that
\[
  \minen^{R}_{L}=\frac1R\,\minen_{L/R}
  \;=\;\frac{L}{R^{2}}+\cstar\,\frac{L^{4/9}}{R^{13/9}}+O\!\Bigl(\frac{L^{1/3}}{R^{4/3}}\Bigr)
\]
by \cref{thm:sharpmain}, valid whenever $L/R$ is large.

\subsection*{Background}

Tightly packed elastic filaments have been studied extensively in the physics literature, where the recurring
observation is that confinement produces \emph{spirals}.
Bou\'e et al.~\cite{Boue2006} identified the spiral as the elementary building block of two-dimensional packing
and described the ``yin-yang'' pattern, an \emph{S}-shaped curve at the core surrounded by spiralling layers; Alben
\cite{Alben2021} computed the frictionless packing of a ring in a shrinking disk and found the same morphology,
a spiral of nearly circular turns together with a single sharp bend trapped against the wall.
Wires injected into a planar circular cavity show the same spiral morphology in experiment and in
discrete-element simulation, together with a friction-dependent transition to cascading loops
\cite{DonatoGomesDeSouza2003,BoueKatzav2007,StoopWittelHerrmann2008,VetterWittelStoopHerrmann2013}.
The corresponding problem for a rod compressed inside a rectangular channel has been analysed in detail, and
there too a transition is observed from distributed wrinkles to a localised spiral, the so-called yin-yang
state \cite{Deboeuf2024,Abramian2026,Wang2026}.

Those results concern tightly packed wires, in which friction and the excluded volume of a wire of positive
thickness do genuine work.
Here the bending energy is the whole of the model and neither is charged for: excluded volume becomes the
dominant effect once the $\asymp L$ turns of the bundle can no longer be nested at negligible cost, and the
asymptotics below describe the complementary regime, in which the wire is thin enough for the fold to be the
leading obstruction.
For the bending energy penalised by thickness rather than by a container see
\cite{GerlachReiterVonDerMosel2017}, where multiply covered circles again appear as the limit configurations,
and \cite{GerlachVonDerMosel2011} for the associated packing problem.

As for $\minen_{L}$, the asymptotics of \cite{Alben2021} for a ring of length $2\pi$ in a disk of radius $R_{w}$,
rescaled so that the container becomes the unit disk, give a bending energy $L+O(\sqrt{L})$: the spiral itself
contributes only the baseline $L$, and the entire excess is the price of the one sharp fold.
On the mathematical side, Wojtowytsch \cite{Wojtowytsch2018} proved the matching upper bound
$\minen_{L}\le L+C\sqrt{L}$ in the large-length regime, no better lower bound than \cref{eq:trivialbound} being
available.

\Cref{thm:sharpmain} shows that $\sqrt{L}$ is not optimal, and the reason is instructive.
In the configurations just described the fold is given room by displacing the whole bundle of turns inward,
uniformly, by the fold's width $\delta$: this costs $\delta$ per turn, so $\asymp L\delta$ in total, against a
fold cost $\asymp\delta^{-1}$, and the balance $\delta=L^{-1/2}$ returns $\sqrt{L}$ --- this is precisely the
balance struck in \cite[\S5]{Wojtowytsch2018}, where the optimal constant is explicitly left undetermined.
The improvement is to make the displacement \emph{local}: if each turn dips to depth $\delta$ only over an
angular window of width $\delta^{1/4}$, its cost falls from $\delta$ to $\delta^{5/4}$, and balancing
$\delta^{-1}$ against $L\delta^{5/4}$ gives $\delta=L^{-4/9}$ and an excess of order $L^{4/9}$.
\Cref{sec:heuristics} explains why $\delta^{1/4}$ is the optimal window width and why no other arrangement does
better; the harder half of \cref{thm:sharpmain} is the matching lower bound, which must exclude every competitor
rather than every competitor of a prescribed shape.

The infimum $\minen_{L}$ is in general not attained in $\emb_{L}$.
By \cite[Prop.~2.1]{DondlMugnaiRoger2011} and \cite[Lem.~2.6]{Wojtowytsch2018}, if $\overline{\emb_{L}}$
denotes the $W^{2,2}$-closure of $\emb_{L}$, then
\[
  \minen_{L}\;=\;\min_{\overline{\emb_{L}}}\ \energy ,
\]
and a minimiser may have self-contacts and touch $\partial\disk$, but has a unique tangent line at every
point.
For large $L$ minimisers are expected to have self-contacts; a proof is outside the scope of this paper.
The elements of $\overline{\emb_{L}}$ are described, though not completely characterised, in terms of
$W^{2,2}$-immersions and generalised curves in
\cite{BellettiniDalMasoPaolini1993,delladio1997special,BellettiniMugnai2004,bellettini2007varifolds}; we do not
pursue such a characterisation, because the route followed below works directly with elements of
$\emb_{L}$.
Other constrained elasticae are treated in \cite{DayrensMasnouNovaga2015,MiuraYoshizawa2024}, the
Li--Yau-type inequality of \cite{MuellerRupp2021,Miura2023} bounds the multiplicity of a $W^{2,2}$ curve in
terms of $\energy\cdot L$, the survey \cite{MantegazzaPludaPozzetta2021} covers the elastic energy of curves in
general, and \cite{BartelsWeyer2022} give a convergent scheme for computing confined elasticae in convex
domains.

The contrast with the three-dimensional analogue studied by M\"uller and R\"oger \cite{MullerRoger2014} ---
surfaces of sphere type of prescribed area, embedded in the unit ball, minimising the Willmore energy --- is
instructive, because there the two features that fail in the plane are both present.
The Willmore energy is scale invariant, and nested spheres may be merged into a single embedded surface by
catenoidal necks, which carry no energy at all, being minimal; so the energy can be made to scale exactly like
the first-order term, and the defect stays bounded.
In the plane, as already noted, $\energy(\lambda\gamma)=\lambda^{-1}\energy(\gamma)$ is not scale invariant, and
merging two nested circles requires reversing the tangent: a fold confined to width $\delta$ costs at least a constant
multiple of $\delta^{-1}$, and \cref{thm:sharpmain} measures what that obstruction is worth.

\subsection*{Plan}

\Cref{sec:heuristics} is an informal account of the mechanism fixing the exponent, together with a description
of the competitor; nothing later depends on it except the elementary trade-off \cref{eq:generaltradeoff}.
\Cref{sec:notation} fixes notation.
\Cref{sec:upper} constructs the competitor and proves the upper bound.
\Cref{sec:lower} proves the lower bound: it develops an identity expressing the excess of any arc of the wire,
up to a boundary term, through pointwise nonnegative densities, shows that reversals of the sense of rotation
are few, selects a fold on the outer envelope of the wire where nothing lies above it, and prices that fold
against the passages it forces underneath.
\Cref{sec:remarks} collects remarks and open problems.

%% file: sec2-heuristics.tex
\section{Why the exponent is \texorpdfstring{$4/9$}{4/9}}
\label{sec:heuristics}

This section is informal, and no proof depends on it apart from the trade-off \cref{eq:generaltradeoff}, a
one-line calculus exercise quoted for motivation.
All statements are to be read up to constants and lower-order terms.

\subsection{The excess, and why topology forces a fold}
\label{sec:heur-fold}

Write $\excess(\gamma):=\energy(\gamma)-L$ for the \emph{excess}.
By \cref{eq:trivialbound}, $\excess\ge0$ with equality only for the multiply covered unit circle, so $\excess$
measures the deviation from that circle, in two senses which \cref{sec:budgets} makes quantitative: almost all
of the length must lie close to $\partial\disk$ and run almost tangentially, and $\abs\kappa$ must be close to
$1$ in mean square.
The picture is a bundle of about $L/2\pi$ nearly circular strands hugging the boundary, which costs nothing to
leading order; the entire excess is the price of assembling it into one embedded loop.

That price is not zero because of orientation.
A strand hugging the boundary sweeps polar angle at unit rate, so the \emph{unsigned} angular travel is about
$L$, whereas the \emph{signed} total is $2\pi\ind_{\gamma}(0)\in\{0,2\pi\}$ for a Jordan curve.
Hence about half the length runs counterclockwise and half clockwise: somewhere the wire turns around, and we
call such a turning point a \emph{fold}.
Folds are forced before any energy consideration, and the two costs they generate are what the exponent
balances.

\subsection{The two costs}
\label{sec:heur-costs}

Let the fold sit at distance $\delta$ from $\partial\disk$.
Reversing the sense of travel rotates the tangent by $\pi$, and inside a channel of width $\delta$ this must be
done over a length $\asymp\delta$, so by Cauchy--Schwarz the fold costs $\gtrsim\delta^{-1}$; the sharp
constant is $\dgconst^{2}$, where $\dgconst$ is the constant denoted $c_{0}$ by Deckelnick and Grunau
\cite[Lem.~4]{DeckelnickGrunau2007} in their study of the one-dimensional Willmore equation, see
\cref{lem:depthchart,prop:head}.
Cheap folds are wide folds --- but a wide fold is an obstacle: at its angle it occupies every depth between $0$
and $\delta$, so each of the remaining $\asymp L$ passages, unable to cross it, must dip to depth $\delta$
there.

The cost of one dip is where the exponent is made.
Writing a strand near the boundary as a polar graph $r=1-u(\theta)$ with $u\ge0$ small, the expansion of length
and curvature recorded in \cref{sec:appA} gives excess
\begin{equation}
  \label{eq:secondorder}
  \int\Bigl(2u+(u'')^{2}\Bigr)d\theta\;+\;\text{lower order},
\end{equation}
the linear term paying for the length lost by moving inward, the quadratic one for the extra bending.
Both are essential: the first alone is minimised by not dipping, the second by dipping infinitely gently.
Inserting $u(\theta)=\delta^{a}v(\theta\delta^{-b})$, a fixed profile $v\ge0$ of compact support
scaled independently in amplitude and window, turns \cref{eq:secondorder} into
$C\delta^{\min(a+b,\,2a-3b)}$.
The amplitude is prescribed by the fold that must fit underneath, the window width is free, and equalising the
two exponents gives $b=\tfrac14a$: with $a=1$, a dip of depth $\delta$ is optimally spread over
an angular window of width $\delta^{1/4}$ and costs $\delta^{5/4}$.
The exponent is therefore not an artefact of the ansatz but the scaling of the model problem
$\inf\{\int(2u+(u'')^{2}):u\ge0,\ \max u\ge\delta\}$, whose minimiser is a quartic profile; the same
competition between a linear term and $(u'')^{2}$ under the constraint $u\ge0$ governs the adhesive obstacle
problem of \cite{Miura2016}.

\subsection{The trade-off}
\label{sec:heur-dip}

The two costs pull against each other, and in general form
\begin{equation}
  \label{eq:generaltradeoff}
  \min_{\delta>0}\Bigl(\frac1\delta + L\,\delta^{\beta}\Bigr)
  \;=\;(1+\beta)\,\beta^{-\frac{\beta}{1+\beta}}\,L^{\frac{1}{1+\beta}}
  \;\asymp\; L^{\frac{1}{1+\beta}},
\end{equation}
attained at $\delta_{*}=(\beta L)^{-1/(1+\beta)}$.
At the optimum the two terms are comparable, so the value is $\asymp\delta_{*}^{-1}$: the excess is, to
leading order, the price of the fold alone.
With $\beta=\tfrac54$ this gives $\delta\asymp L^{-4/9}$ and excess of order $L^{4/9}$, the content of
\cref{thm:sharpmain}.
The comparison is instructive: charging a dip only for depth and tilt, ignoring the bending it requires, gives
$\beta=\tfrac32$ and the weaker exponent $2/5$; the whole difference lies in the term $(u'')^{2}$ of
\cref{eq:secondorder}.
Finer structure follows from the same picture.
Carrying the expansion one order further produces a correction $-3\int(u')^{2}$ to the per-dip cost, of
relative size $\delta^{1/2}$, so the successive terms $L\delta^{5/4}$, $L\delta^{7/4}$, $L\delta^{9/4}$ are of
orders $L^{4/9}$, $L^{2/9}$, $L^{0}$, and the next tends to zero; the fold contributes
$\delta^{-1}(1+O(\delta))$, at the first and the last of these orders.
With the $O(1)$ price of the connector this is \cref{eq:expansion}.

\subsection{The competitor, and the lower bound}
\label{sec:heur-competitor}

\Cref{fig:construction} shows the curve realising the upper bound.
A \emph{bundle} of $2N$ nearly circular passages hugs $\partial\disk$ and carries almost all the length; over
one angular window of width $\delta^{1/4}$ the passages \emph{dip} to depth $\asymp\delta$, freeing a
lens-shaped region against the boundary; inside it sits the \emph{fold}, a U-turn of width $\delta$ traversed
once, joined to the innermost passage by a short arc; and an \emph{S-connector} routed through the empty
interior joins the two innermost ends of the bundle at a cost independent of $\delta$.
Adding the costs gives $\excess\lesssim\delta^{-1}+L\delta^{5/4}$, and \cref{eq:generaltradeoff} with
$\delta=L^{-4/9}$ yields the upper bound.

That this cannot be beaten is \cref{sec:lower}, and the argument follows the same two costs.
The difficulty is that an arbitrary competitor need not be organised as a bundle with one fold, and that a fold
might be \emph{shielded}: if other passages ran between it and the boundary, it could be wide without forcing
anyone to dip.
The observation which removes this is that the fold may be chosen on the \emph{outer envelope}, at a point
outermost along its own ray.
Such a fold exists: otherwise the envelope would be a closed polar graph traversed by the wire, hence, being a
closed connected subset of a Jordan curve, the whole of it, so the wire would sweep angle in one sense only,
contradicting the balance above.
Nothing lies above an envelope fold, so every passage across its ray lies below it and both horns apply at
once.
Two further points occupy most of \cref{sec:lower}: the passages must be shown to dip in \emph{distinct}
windows, so that their costs add, which is where the scarcity of reversals enters; and the per-dip cost must be
extracted from an arbitrary competitor rather than from a polar graph, since near a fold the wire need not be a
graph at all, which is done by working with the depth as a function of arclength, for which an exact identity
replaces \cref{eq:secondorder}.

\begin{figure}
  \centering
  \includegraphics[width=0.8\textwidth]{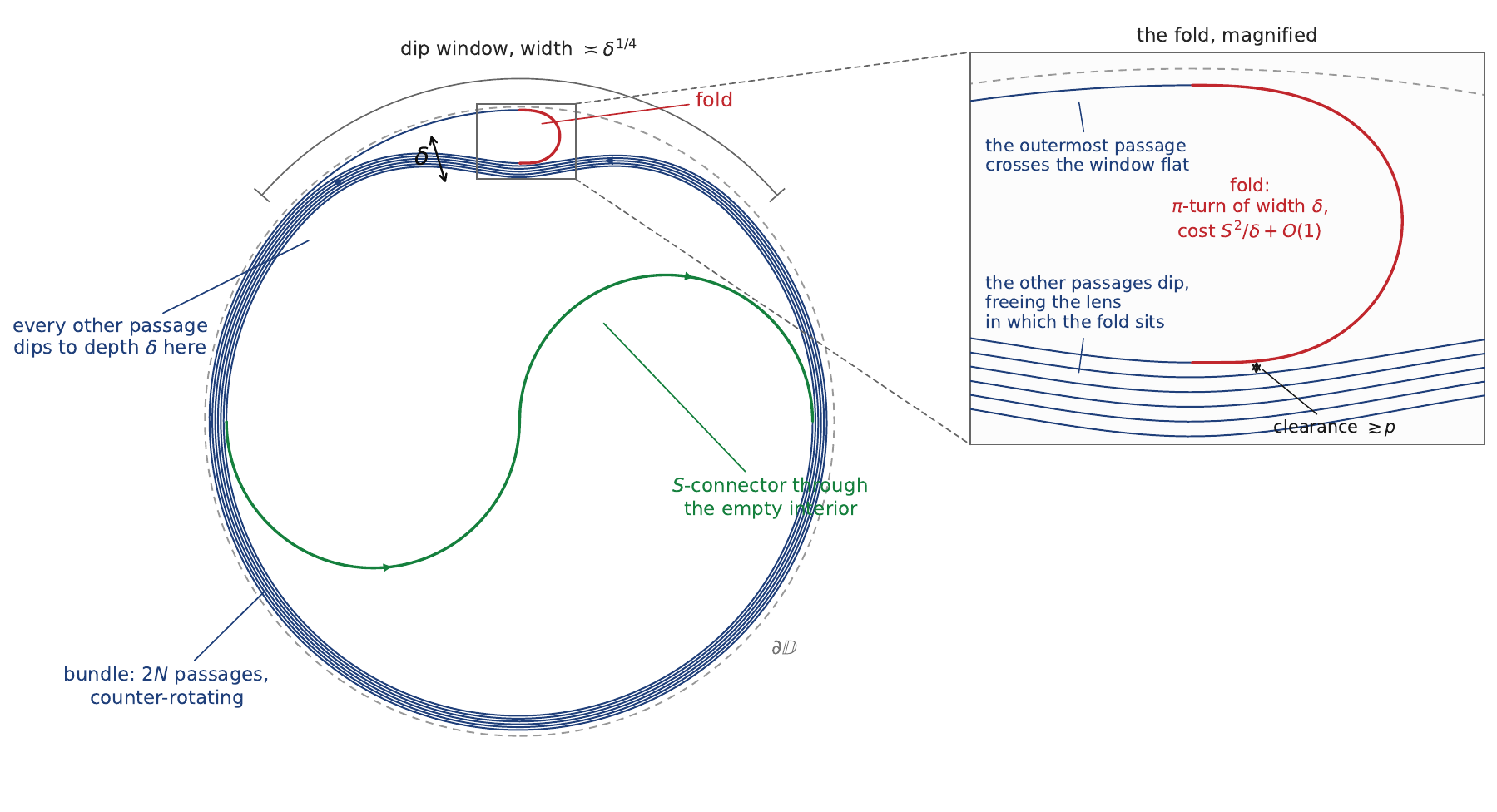}
  \caption{The competitor of \cref{sec:upper}, drawn with the fold region exaggerated.
  The bundle of $2N\asymp L$ passages hugs $\partial\disk$; over an angular window of width $\delta^{1/4}$ it
  dips to depth $\delta$, and the fold---a U-turn traversed once---occupies the lens so freed between the dip and
  the boundary.
  The two counter-rotating halves of the bundle are joined through the empty interior by the S-connector, whose
  endpoints are antipodal and whose prescribed tangents are therefore parallel.
  Balancing the cost $\delta^{-1}$ of the U-turn against the total cost $N\,\delta^{5/4}$ of the dips gives
  $\delta\asymp L^{-4/9}$.}
  \label{fig:construction}
\end{figure}

%% file: sec3-notation.tex
\section{Notation}
\label{sec:notation}

Throughout, $\disk=\{z\in\mathbb{R}^{2}:\abs{z}<1\}$ and curves are parametrised by arclength on $[0,L]$ and
identified with maps on $\mathbb{R}/L\mathbb{Z}$.
We write $\tau=\gamma'$, $\nu=i\tau$ for the unit normal, $\kappa$ for the signed curvature, so that
$\tau'=\kappa\nu$, and
\[
  \energy(\gamma)=\int_{0}^{L}\kappa^{2}\,ds,\qquad
  \turning(\gamma)=\int_{0}^{L}\abs{\kappa}\,ds,\qquad
  \excess(\gamma)=\energy(\gamma)-L .
\]
We identify $\mathbb{R}^{2}$ with $\mathbb{C}$ and write $\det(a,b)=\operatorname{Im}(\bar a b)$, so that
$\det(a,ib)=\langle a,b\rangle$.
The class of competitors is
\[
  \emb_{L}:=\bigl\{\gamma\in W^{2,2}(\mathbb{R}/L\mathbb{Z};\overline{\disk}):\
    \abs{\gamma'}\equiv 1,\ \gamma\ \text{injective}\bigr\},
  \qquad \minen_{L}:=\inf_{\emb_{L}}\energy .
\]
Every competitor is oriented positively, i.e.\ $\ind_{\gamma}\equiv+1$ on the bounded component of the complement
of the trace; this is no restriction, since reversing the parametrisation changes neither $\energy$ nor the
length, and every density estimated in \cref{sec:lower} is invariant under the simultaneous change of sign of
$\kappa$ and $\tau$ that a reversal produces.
Two scalar functions of the arclength recur throughout: the radius $r:=\abs{\gamma}$ and the \emph{depth}
\[
  h:=1-r\in[0,1] .
\]
The depth is $1$-Lipschitz, so $\abs{h'}\le1$ almost everywhere, and it is of class $W^{2,2}$ on every set
where $r\ge c>0$, the map $x\mapsto\abs x$ being smooth away from the origin; a parameter with $\gamma(s_{0})=0$ produces a corner,
$h(s)=1-\abs{s-s_{0}}+o(\abs{s-s_{0}})$.
Away from the Lipschitz bound, every statement involving $h'$ or $h''$ is confined to $\{h\le\tfrac1{10}\}$,
where $r\ge\tfrac9{10}$.
With an abuse of notation we use the same letter for the depth of a point, $h(x):=1-\abs{x}$ for
$x\in\overline{\disk}$, so that $h(s)=h(\gamma(s))$.
The elementary inequality
\begin{equation}
  \label{eq:depthbound}
  h\ \le\ h\,(2-h)\ =\ 1-r^{2}\qquad\text{on }\overline{\disk}
\end{equation}
is used repeatedly to pass from \cref{eq:B1} to bounds on $\int h$.

We write $a\asymp b$ when each of $a$ and $b$ is bounded by a fixed multiple of the other, the implied
constants being absolute unless stated otherwise.

\subsection{The radial frame}
\label{sec:frames}

Every quantity in this paper, metric or angular, is referred to the origin; no estimate involves
$\dist(0,\gamma([0,L]))$.
Wherever $r>0$ we write $e_{r}:=\gamma/r$ and $e_{\theta}:=ie_{r}$ for the radial and angular directions,
$\varphi$ for a continuous polar angle about the origin, and we define the angle $\psi$ by the decomposition
\begin{equation}
  \label{eq:frame}
  \tau\ =\ \cos\psi\,e_{\theta}\ +\ \sin\psi\,e_{r},
\end{equation}
so that
\begin{equation}
  \label{eq:framerelations}
  h'=-\sin\psi,\qquad \varphi'=\frac{\cos\psi}{1-h},\qquad \kappa=\varphi'-\psi' .
\end{equation}
Up to a shift, $\psi$ is the classical \emph{polar tangential angle} of a plane curve, the angle between the
position vector and the tangent; see \citet{Miura2021} for its geometry, and in particular for its monotonicity
along curves of monotone curvature.
In this frame the \emph{sweep density}
\begin{equation}
  \label{eq:sweepdef}
  \sweepdens\ :=\ \det(\gamma,\tau)\ =\ -\langle\gamma,\nu\rangle\ =\ r\cos\psi
\end{equation}
is the rate at which the curve sweeps angle about the origin, weighted by $r^{2}$: by
\cref{eq:framerelations}, $\sweepdens=r^{2}\varphi'$ wherever $r>0$.
It is defined everywhere, also through the origin, and it is the variable in which folds are counted in
\cref{sec:lower-scarcity}.

%% file: sec4-upper.tex
\section{The upper bound}
\label{sec:upper}

Throughout this section $\Delta\in(0,\Delta_{0})$ is the dip depth, $\Delta_{0}$ a small absolute constant,
$v$ is the quartic profile of \cref{lem:quartic} below, supported on $[-\quartrad,\quartrad]$ with
$\quartrad=72^{1/4}$, and $w:=\quartrad\Delta^{1/4}$ is the half-width of the dip window.
The number of turns is $N\in\mathbb{N}$, and the \emph{pitch} is
\begin{equation}
  \label{eq:pitchchoice}
  p:=\Delta^{5},
  \qquad\text{with the standing assumption}\qquad N\Delta^{9/4}\le1 ,
\end{equation}
so that the total depth of the bundle satisfies $Np\le\Delta^{11/4}\le\Delta$, and
$\Delta^{5}N^{2}=(N\Delta^{9/4})^{2}\Delta^{1/2}\le1$; in the regime $N\asymp L$,
$\Delta\asymp L^{-4/9}$ of \cref{thm:upper} the standing assumption holds with room to spare.
The exponent is dictated by two competing constraints made precise in
\cref{prop:upperconstruction,rem:whydelta}: the pitch must exceed the $O(\Delta^{11/2})$ penetration depth of
the fold, and $N^{2}p$ must vanish in the regime $N\asymp L$, $\Delta\asymp L^{-4/9}$ of
\cref{thm:upper}; any $p$ with $\Delta^{11/2}\ll p\ll N^{-2}$ works, and \cref{eq:pitchchoice} is a
convenient representative.
Constants denoted $C$ are absolute, and may change from line to line; they never depend on $\Delta$, on $L$,
or on the number of passages.

\subsection{The competitor}
\label{sec:upper-competitor}

The curve is assembled from a bundle of nearly circular passages, a fold, and a connector through the interior.
We describe the three parts in polar coordinates $z=(1-u)e^{i\theta}$, so that $u$ is the depth.

\emph{The bundle.}
Let $N\in\mathbb{N}$, let the pitch $p=\Delta^{5}$ be as in \cref{eq:pitchchoice}, and set
\[
  \theta_{A}:=2\pi N+\tfrac\pi2,\qquad \theta_{B}:=2\pi N-\tfrac\pi2,
\]
so that $\theta_{A}-\theta_{B}=\pi$ and $\theta_{A}+\theta_{B}=4\pi N$.
Let
\[
  \beta(\theta):=\Delta\,v\!\left(\frac{\theta\ \mathrm{mod}\ 2\pi}{\Delta^{1/4}}\right),
  \qquad \theta\ \mathrm{mod}\ 2\pi\in[-\pi,\pi),
\]
with $v$ the profile of \cref{lem:quartic}, so that $\beta$ is supported in the windows
$\abs{\theta-2\pi k}\le w:=\quartrad\Delta^{1/4}$, has $\beta(2\pi k)=\Delta$, and vanishes at the edge of each
window together with its first two derivatives.
Define, for $\theta\in[0,\theta_{A}]$ and $\theta\in[0,\theta_{B}]$,
\begin{equation}
  \label{eq:spirals}
  u_{A}(\theta):=\frac p2+\frac{p\,\theta}{2\pi}+\beta(\theta),
  \qquad
  u_{B}(\theta):=\frac p4+\frac{p\,\theta}{2\pi}+\beta(\theta)\,\mathbb 1_{\{\theta\ge w\}} .
\end{equation}
Thus every turn carries the dip $\beta$ except the outermost turn of $\Gamma_{B}$, which crosses its window
flat; the two formulas for $u_{B}$ agree on $[w,2\pi-w]$, where $\beta\equiv0$, so $u_{B}$ is as smooth as
$\beta$.
The graphs $\Gamma_{A}$ and $\Gamma_{B}$ are disjoint and each is embedded, since the baselines are ordered and
$\beta\ge0$ is common to all turns but one, which is the outermost.
The \emph{outer} endpoints, at $\theta=0$, are at depths $\tfrac p2+\Delta$ --- the bottom of the outermost dip
of $\Gamma_{A}$ --- and $\tfrac p4$; the \emph{inner} endpoints, at $\theta_{A}$ and $\theta_{B}$, are
antipodal, lie at angular distance $\tfrac\pi2>w$ from the nearest window, and are at depths differing by
$\tfrac{3p}4$.
The curve traverses $\Gamma_{A}$ with $\theta$ increasing and $\Gamma_{B}$ with $\theta$ decreasing, so that the
two carry opposite orientations, as they must.
Counting window centres, $\Gamma_{A}$ carries $N$ complete dips and $\Gamma_{B}$ carries $N-1$, so the bundle
carries $2N-1$ in all.

\emph{The fold.}
Over the window $\abs\theta\le w$ every turn but the outermost has depth at least $\tfrac p2+\beta(\theta)$,
so the lens $\{\abs\theta\le w,\ \tfrac p4\le u\le\tfrac p2+\beta(\theta)\}$ is free of the bundle.
The fold is placed inside it: it joins the outer endpoint of $\Gamma_{A}$ to that of $\Gamma_{B}$, rising in
depth by $\Delta+\tfrac p4$ while reversing the sense of rotation (\cref{lem:fold}).

\emph{The connector.}
The bundle lies in the annulus $\{1-pN-\Delta\le\abs{z}\le1\}$, so the disk of radius $\tfrac12$ about the
origin is free of it, and both inner endpoints are the deepest points of their graphs at their respective
angles.
The connector joins them through the interior (\cref{lem:connector}).
Because the two inner endpoints are antipodal, and because $\Gamma_{A}$ is traversed with $\theta$ increasing
while $\Gamma_{B}$ is traversed with $\theta$ decreasing, the tangents to be matched at the two ends of the
connector are \emph{parallel}: this is what makes the connector an \emph{S}-shaped arc of bounded curvature rather
than a hairpin.

\subsection{The excess of a polar graph}
\label{sec:upper-graph}

All bundle estimates reduce to the following expansion, proved in \cref{sec:appA}.

\begin{lemma}
  \label{lem:graphexcess}
  Let $I\subset\mathbb{R}$ be an interval and $u\in W^{2,\infty}(I)$ with
  $\norm{u}_{\infty}+\norm{u'}_{\infty}\le\tfrac18$, and let $\Gamma$ be the polar graph
  $\theta\mapsto(1-u(\theta))e^{i\theta}$, $\theta\in I$, of length
  $\mathcal{H}^{1}(\Gamma):=\int_{I}\sqrt{(1-u)^{2}+(u')^{2}}\,d\theta$ (counted with multiplicity, $I$ being
  allowed to be longer than $2\pi$).
  Then, with a constant $C$ independent of $u$ and $I$,
  \begin{align*}
    \Biggl|\ \int_{\Gamma}\kappa^{2}\,ds-\mathcal{H}^{1}(\Gamma)
      &-\int_{I}\Bigl(2u+2u''+(u'')^{2}\Bigr)d\theta\ \Biggr|\\
      &\le\; C\int_{I}\Bigl(u^{2}+(u')^{2}+\abs{u\,u''}+\abs{u''}^{3}\Bigr)d\theta .
  \end{align*}
\end{lemma}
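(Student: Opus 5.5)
Everything is explicit for a polar graph, so the plan is to write $\kappa^{2}\,ds$ as a rational function of $(u,u',u'')$ times $d\theta$, subtract the length density, and Taylor-expand around $u=u'=0$. The remainder is then controlled by the listed terms. Write $\rho:=1-u$. The polar graph $\theta\mapsto\rho e^{i\theta}$ has speed $\sigma:=\sqrt{\rho^{2}+(\rho')^{2}}$ and curvature
\[
  \kappa=\frac{\rho^{2}+2(\rho')^{2}-\rho\rho''}{\sigma^{3}} .
\]
Hence
\[
  \int_{\Gamma}\kappa^{2}\,ds-\mathcal{H}^{1}(\Gamma)=\int_{I}\Bigl(\frac{(\rho^{2}+2(\rho')^{2}-\rho\rho'')^{2}}{\sigma^{5}}-\sigma\Bigr)d\theta .
\]
In terms of $u$ we have $\rho'=-u'$, $\rho''=-u''$, and the numerator inside the square is
\[
  N:=(1-u)^{2}+2(u')^{2}+(1-u)u'' .
\]
The hypothesis $\norm{u}_{\infty}+\norm{u'}_{\infty}\le\frac18$ keeps $\sigma$ in a compact subinterval of $(0,\infty)$. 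Therefore every coefficient function of $(u,u')$ that appears below is smooth with derivatives bounded by an absolute constant.

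The key structural point is that the integrand is a \emph{polynomial of degree two in $u''$} with coefficients smooth in $(u,u')$. I would write it as
\[
  F=A(u,u')+B(u,u')\,u''+D(u,u')\,(u'')^{2},
\]
where
\[
  A=\frac{((1-u)^{2}+2(u')^{2})^{2}}{\sigma^{5}}-\sigma,\qquad
  B=\frac{2(1-u)((1-u)^{2}+2(u')^{2})}{\sigma^{5}},\qquad
  D=\frac{(1-u)^{2}}{\sigma^{5}} .
\]
Each coefficient is then expanded separately.
\begin{itemize}
\item At $u=u'=0$ one has $A=0$ and $\partial_{u}A=-4+5-1\cdot(-1)\cdot\ldots$. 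A direct computation gives $A=2u+O(u^{2}+(u')^{2})$: the $u'$-linear term vanishes because $A$ depends on $u'$ only through $(u')^{2}$.
\item $B=2+O(\abs{u}+(u')^{2})$. The $u$-correction is handled by $\abs{u\,u''}$, and the $(u')^{2}u''$ term by $(u')^{2}\abs{u''}\le\frac18\abs{u'}\abs{u''}$.
\item $D=1+O(\abs{u}+(u')^{2})$. The correction times $(u'')^{2}$ must be bounded by the allowed terms.
\end{itemize}

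The main obstacle is exactly these mixed remainders. They are $(u')^{2}u''$, $u(u'')^{2}$ and $(u')^{2}(u'')^{2}$, and they are not literally of the listed forms. I plan to handle each by Young's inequality with exponents $3$ and $3/2$:
\begin{itemize}
\item $\abs{u}(u'')^{2}\le C(\abs{u}^{3}+\abs{u''}^{3})\le C(u^{2}+\abs{u''}^{3})$, using $\abs u\le\frac18$.
\item $(u')^{2}(u'')^{2}\le C((u')^{6}+\abs{u''}^{3})\le C((u')^{2}+\abs{u''}^{3})$.
\item $(u')^{2}\abs{u''}\le C((u')^{3}+\abs{u''}^{3})\le C((u')^{2}+\abs{u''}^{3})$.
\end{itemize}
The term $u\,u''$ is already listed. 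Collecting these gives
\[
  \abs{F-(2u+2u''+(u'')^{2})}\le C\bigl(u^{2}+(u')^{2}+\abs{u\,u''}+\abs{u''}^{3}\bigr)
\]
pointwise. Integrating over $I$ then gives the lemma, with no integration by parts and hence no dependence on $I$ or on boundary values.

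The only genuinely computational check is the first-order expansion of $A$. Set $u'=0$, so that $A=(1-u)^{4}/(1-u)^{5}-(1-u)=(1-u)^{-1}-(1-u)=2u+O(u^{2})$. The $(u')^{2}$ corrections are absorbed into the remainder. The detailed verification of this step is deferred to \cref{sec:appA}.
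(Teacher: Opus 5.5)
Your proposal is correct and is essentially the paper's argument. Both use the same polar curvature formula (your $N$ is the paper's $n=a+b$), a Taylor expansion about $u=u'=0$ exploiting that the coefficients depend on $u'$ only through $(u')^{2}$, and Young's inequality with exponents $3$ and $\tfrac32$ to push $\abs{u}(u'')^{2}$, $(u')^{2}\abs{u''}$ and $(u')^{2}(u'')^{2}$ into $\abs{u''}^{3}$. Writing the integrand exactly as $A+Bu''+D(u'')^{2}$ is just a tidier bookkeeping of the paper's expansion of $(a^{2}(1-a)+2ab+b^{2})/a^{5/2}$, and it makes explicit the $(u')^{2}(u'')^{2}$ term that the paper's appeal to Young leaves implicit.

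Two cosmetic points. First, the bound $(u')^{2}\abs{u''}\le\tfrac18\abs{u'}\abs{u''}$ in your $B$-bullet is not of admissible form; it is your later Young estimate that actually closes that term. Second, the closing deferral to \cref{sec:appA} is unnecessary: your computation $A(u,0)=(1-u)^{-1}-(1-u)$, together with smoothness in $(u,(u')^{2})$, already completes that step.
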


Two features of this formula drive the construction.
The term $2u''$ integrates to $2[u']_{\partial I}$, hence contributes nothing over a full turn or over a window
at whose endpoints $u'$ vanishes.
The remaining terms $2u$ and $(u'')^{2}$ are the two costs described in \cref{sec:heur-dip}: moving inward loses
length, and doing so requires bending.

\subsection{The pieces}
\label{sec:upper-pieces}

\begin{lemma}
  \label{lem:bundle}
  The bundle has total length
  \[
    4\pi N-\int_{0}^{\theta_{A}}u_{A}\,d\theta-\int_{0}^{\theta_{B}}u_{B}\,d\theta
      +O\bigl(N\Delta^{7/4}+\Delta^{10}N\bigr),
  \]
  in which the two integrals sum to a quantity $\asymp N\Delta^{5/4}$, and excess at most
  $C\bigl(N\Delta^{5/4}+\Delta^{5}N^{2}\bigr)$.
\end{lemma}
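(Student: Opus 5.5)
Both claims follow by applying \cref{lem:graphexcess} to $u_{A}$ on $I=[0,\theta_{A}]$ and to $u_{B}$ on $I=[0,\theta_{B}]$, after checking its hypotheses. Since $\norm{v}_{\infty}\le1$ and $\norm{v'}_{\infty},\norm{v''}_{\infty}\le C$, we have
\[
  \norm{\beta}_{\infty}\le\Delta,\qquad \norm{\beta'}_{\infty}\le C\Delta^{3/4},\qquad \norm{\beta''}_{\infty}\le C\Delta^{1/2},
\]
and the linear parts contribute at most $p+p\theta_{A}/2\pi\le C Np\le\Delta^{11/4}$ by \cref{eq:pitchchoice}. Hence $\norm{u}_{\infty}+\norm{u'}_{\infty}\le\tfrac18$ for $\Delta_{0}$ small. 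The one turn of $u_{B}$ with the dip cut off by the indicator is harmless: on $[0,w]$ it is simply the linear function. Thus $u_{B}\in W^{2,\infty}$, since $\beta$ vanishes to second order at the window edges.

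\emph{Length.} I would expand $\sqrt{(1-u)^{2}+(u')^{2}}=1-u+O\bigl(u^{2}+(u')^{2}\bigr)$. Integrating gives $\theta_{A}+\theta_{B}=4\pi N$ minus the two integrals, plus $O\bigl(\int(u^{2}+(u')^{2})\bigr)$. In the error, each of the $\asymp N$ windows has width $\asymp\Delta^{1/4}$, and there
\[
  \beta^{2}\le\Delta^{2},\qquad (\beta')^{2}\le C\Delta^{3/2},
\]
so each window contributes $O(\Delta^{7/4})$. Outside the windows $u'=p/2\pi$ and $u\le CNp$, contributing $O\bigl(N(p^{2}+N^{2}p^{2})\bigr)$. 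Cross terms between $\beta$ and the linear part are bounded the same way. With $p=\Delta^{5}$ the pure pitch part is $O(\Delta^{10}N)$. The term $N^{3}\Delta^{10}$ has to be absorbed using $N\Delta^{9/4}\le1$, which gives $N^{3}\Delta^{10}\le N\Delta^{11/2}\le N\Delta^{7/4}$. This is the bookkeeping I would check carefully, since the stated error has only two terms.

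For the size of the integrals, $\int\beta=\Delta^{5/4}\int v$ per window, and there are $2N-1$ windows, giving $\asymp N\Delta^{5/4}$. The linear parts add $O(N^{2}p)=O(N^{2}\Delta^{5})$, which is $\le N\Delta^{5/4}$ because $N\Delta^{15/4}\le\Delta^{3/2}$.

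\emph{Excess.} By \cref{lem:graphexcess}, the excess of each graph is at most $\int(2u+2u''+(u'')^{2})$ plus the error. The term $2u''$ integrates to $2[u']_{\partial I}=O(p)$, because the endpoints lie outside windows where $u'=p/2\pi$. The term $2u$ gives $CN\Delta^{5/4}+CN^{2}p$. The term $(u'')^{2}=(\beta'')^{2}$ contributes $\Delta^{2}\cdot\Delta^{-1}\cdot\Delta^{1/4}\int(v'')^{2}=C\Delta^{5/4}$ per window. For the error terms:
\begin{itemize}
\item $u^{2}$ and $(u')^{2}$ were already bounded in the length step;
\item $\abs{uu''}$ is at most $\Delta\cdot\Delta^{1/2}\cdot\Delta^{1/4}$ per window, plus $Np\cdot\Delta^{3/4}$ from the linear part of $u$ times $\beta''$;
\item $\abs{u''}^{3}$ is at most $\Delta^{3/2}\Delta^{1/4}$ per window.
\end{itemize}
All of these are $O(\Delta^{7/4})$ per window. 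Summing gives excess $\le C(N\Delta^{5/4}+\Delta^{5}N^{2})$.

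The only real obstacle is careful accounting of the cross terms between the pitch and the dips, so that every error fits the two displayed orders under $N\Delta^{9/4}\le1$. Everything else is scaling of the single profile $v$.
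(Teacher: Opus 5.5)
Your proposal is correct and follows essentially the same route as the paper. You apply \cref{lem:graphexcess} to each spiral and use that $2u''$ integrates to $2[u']_{\partial I}$, which is in fact zero here. You then price each window by scaling $v$, getting $\Delta^{5/4}$ at leading order and $O(\Delta^{7/4})$ for every remainder, with the pitch contributing $O(\Delta^{5}N^{2})$.

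Two small corrections. First, the arclength density is $(1-u)+O\bigl((u')^{2}\bigr)$ with the $u$-term exact, so the $u^{2}$ error and the $N^{3}\Delta^{10}$ bookkeeping you flag are unnecessary, although your absorption of that term via $N\Delta^{9/4}\le1$ is valid. Second, the outer endpoint of $\Gamma_{A}$ sits at the centre of its half-window, not outside a window; since $\beta'(0)=0$, you still have $u_{A}'(0)=p/2\pi$ there.
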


\begin{proof}
  Note first that \cref{eq:pitchchoice} gives $\norm{u_{A}}_{\infty}\le\tfrac p2+pN+\Delta\le3\Delta\le\tfrac18$
  for $\Delta_{0}$ small, so \cref{lem:graphexcess} applies.
  Outside the windows $\beta\equiv0$, so there $u_{A}$ is affine with $u_{A}'=p/2\pi$ and $u_{A}''=0$, and by
  \cref{lem:graphexcess} the excess of $\Gamma_{A}$ there is at most
  $\int\bigl(2u_{A}+Cu_{A}^{2}+C(u_{A}')^{2}\bigr)d\theta$.
  The affine part $u_{A}-\beta$ satisfies $0\le u_{A}-\beta\le2pN$ on all of $[0,\theta_{A}]$, so it contributes
  $\int_{0}^{\theta_{A}}\bigl(p+p\theta/\pi\bigr)d\theta\le Cp N^{2}=C\Delta^{5}N^{2}$ to the first term, while
  $(u_{A}-\beta)^{2}\le4p^{2}N^{2}$ and $(u_{A}')^{2}=p^{2}/4\pi^{2}$ integrate, over a length $O(N)$, to at
  most $Cp^{2}N^{3}=C\Delta^{5}N^{2}\cdot\Delta^{5}N\le C\Delta^{5}N^{2}$, since
  $\Delta^{5}N\le\Delta^{11/4}\le1$ by \cref{eq:pitchchoice}; the contribution of $\beta$, and of its coupling
  to the affine part, is accounted for below.

  On a window we have $u_{A}=\text{affine}+\beta$ with $\abs{\beta}\le\Delta$,
  $\abs{\beta'}\le C\Delta w^{-1}$ and $\abs{\beta''}\le C\Delta w^{-2}$, while $u'$ takes the same value
  $p/2\pi$ at both endpoints, so the term $2u''$ integrates to zero there.
  Hence one window contributes at most
  \begin{align*}
    \int_{\abs{\theta}\le w}\Bigl(2\beta+(\beta'')^{2}\Bigr)d\theta
    &+ C\int_{\abs{\theta}\le w}\Bigl(\beta^{2}+(\beta')^{2}+\abs{\beta\beta''}+\abs{\beta''}^{3}\Bigr)d\theta\\
    &\le\; C\bigl(\Delta w+\Delta^{2}w^{-3}\bigr)+C\Delta^{7/4},
  \end{align*}
  the remainder terms being bounded, in order, by $\Delta^{2}w$, $\Delta^{2}w^{-1}$, $\Delta^{2}w^{-1}$ and
  $\Delta^{3}w^{-5}$, each at most $C\Delta^{7/4}$ for $w=\quartrad\Delta^{1/4}$, at which value the leading
  bracket equals $C\Delta^{5/4}$; the terms coupling $\beta$ to the affine part are smaller still, that part
  being at most $2pN\le2\Delta^{11/4}$.
  There are $N$ complete windows on $\Gamma_{A}$, centred at $2\pi k$ for $1\le k\le N$, and $N-1$ on
  $\Gamma_{B}$, for $1\le k\le N-1$; in addition $\Gamma_{A}$ carries the half-window $[0,w]$ of its
  outermost dip, whose cost is $O(\Delta^{5/4})$ by the same estimate.
  The bounds for $\Gamma_{B}$ are identical, which gives the stated excess.

  For the length, $\sqrt{(1-u)^{2}+(u')^{2}}=(1-u)+O\bigl((u')^{2}\bigr)$, so
  \[
    \int_{0}^{\theta_{A}}\!\!\sqrt{(1-u_{A})^{2}+(u_{A}')^{2}}\,d\theta
    \ =\ \theta_{A}-\int_{0}^{\theta_{A}}u_{A}\,d\theta+O\bigl(p^{2}N+N\Delta^{2}w^{-1}\bigr),
  \]
  and similarly for $\Gamma_{B}$; the error is $O\bigl(N\Delta^{7/4}+\Delta^{10}N\bigr)$, and the term
  $N\Delta^{7/4}$ is genuinely present --- it is the length carried by $(\beta')^{2}$, of order $L^{2/9}$ in
  the regime of \cref{thm:upper} --- but harmless: it perturbs the passage count at fixed length by
  $O(L^{2/9})$ and hence the dip total by $O(L^{2/9}\Delta^{5/4})=O(L^{-1/3})$.
  Here $\int_{\abs\theta\le w}\beta\,d\theta=\Delta^{5/4}\int v$, so that
  $\int_{0}^{\theta_{A}}u_{A}=\pi pN^{2}+N\Delta^{5/4}\int v+O(\Delta)$, and likewise for $\Gamma_{B}$;
  since $\int v>0$ is a fixed number and $\Delta^{5}N^{2}\le\Delta^{5/4}N$ by \cref{eq:pitchchoice}
  --- indeed $N\Delta^{15/4}\le\Delta^{3/2}\le1$ --- the two integrals sum to a quantity
  $\asymp N\Delta^{5/4}$.
  It is no accident that the length deficit has the same order as the excess: dipping is precisely a trade of
  length for bending.
\end{proof}

\begin{lemma}
  \label{lem:strip}
  For $\delta\in(0,\tfrac1{10})$ let $\Phi_{\delta}(x,y):=(1-\delta y)\,e^{i\delta x}$, which maps the strip
  $\mathcal{S}:=\mathbb{R}\times[0,3]$ into
  $\overline{\disk}$, the coordinate $y$ measuring depth, so that $y=0$ is $\partial\disk$; the map sends
  depth and angle exactly, $1-\abs{\Phi_{\delta}(x,y)}=\delta y$ and $\arg\Phi_{\delta}(x,y)=\delta x$.
  Let $\xi\subset[-C_{0},C_{0}]\times[0,3]$ be a $C^{1,1}$ arc with $\abs{\kappa_{\xi}}\le C_{0}$ and
  $\mathcal{H}^{1}(\xi)\le C_{0}$.
  Then $\Phi_{\delta}$ multiplies lengths by $\delta\bigl(1+O(\delta)\bigr)$, transforms curvature by
  \[
    \kappa_{\Phi_{\delta}(\xi)}=\delta^{-1}\bigl(\kappa_{\xi}+O(\delta)\bigr),
  \]
  and consequently
  $\int_{\Phi_{\delta}(\xi)}\kappa^{2}ds=\delta^{-1}\!\int_{\xi}\kappa^{2}ds+O(1)$, all constants depending
  only on $C_{0}$.
  The transformation of curvature is additive and not multiplicative: a horizontal segment has
  $\kappa_{\xi}=0$, while its image is an arc of the circle of radius $1-\delta y$, of curvature close to
  $1$.
\end{lemma}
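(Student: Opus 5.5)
We prove \cref{lem:strip} by direct computation of the first and second derivatives of $\Phi_{\delta}$. Identifying $\mathbb{R}^{2}$ with $\mathbb{C}$, the differential is
\[
  D\Phi_{\delta}(x,y)\,(a,b)=\delta\,e^{i\delta x}\bigl(i(1-\delta y)a-b\bigr).
\]
Hence $D\Phi_{\delta}=\delta\,e^{i\delta x}\,M$, where $M$ equals $\operatorname{diag}(1-\delta y,1)$ followed by a rotation by $\pi/2$. On the strip $0\le y\le3$ we have $1-\delta y\in[1-3\delta,1]$, so $M$ is a rotation composed with a diagonal map whose entries are $1+O(\delta)$. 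It follows that $\abs{D\Phi_{\delta}(t)}=\delta(1+O(\delta))$ uniformly for unit vectors $t$, which is the statement on lengths. As a bonus, $\mathcal{H}^{1}(\Phi_{\delta}(\xi))=\delta\,\mathcal{H}^{1}(\xi)(1+O(\delta))$.

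For curvature we parametrise $\xi$ by arclength, $\sigma\mapsto(x(\sigma),y(\sigma))$, set $\eta=\Phi_{\delta}\circ\xi$, and use $\kappa_{\eta}=\det(\eta',\eta'')/\abs{\eta'}^{3}$. Writing $z=x+iy$ and $\eta=e^{i\delta x}(1-\delta y)$, one computes
\[
  \eta'=\delta e^{i\delta x}\bigl(i(1-\delta y)x'-y'\bigr).
\]
Differentiating again gives
\[
  \eta''=\delta e^{i\delta x}\bigl(i(1-\delta y)x''-y''\bigr)+\delta^{2}e^{i\delta x}\bigl(-(1-\delta y)(x')^{2}-2i\,x'y'\bigr).
\]
The first bracket is $\delta\,M\xi''$. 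Its determinant against $\delta\,M\xi'$ equals $\delta^{2}\det M\cdot\det(\xi',\xi'')=\delta^{2}(1-\delta y)\kappa_{\xi}$. The second group contributes $O(\delta^{3})$ to the determinant, because $\abs{\xi'}=1$ and $y$ is bounded. The denominator is $\delta^{3}(1+O(\delta))$, so
\[
  \kappa_{\eta}=\delta^{-1}\bigl(\kappa_{\xi}(1+O(\delta))+O(1)\cdot\delta\bigr)=\delta^{-1}\bigl(\kappa_{\xi}+O(\delta)\bigr).
\]
The last equality uses $\abs{\kappa_{\xi}}\le C_{0}$, which absorbs $\kappa_{\xi}\,O(\delta)$ into $O(\delta)$. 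The sign convention is fixed by orientation: $\det M>0$ because $M$ is a rotation by $\pi/2$ composed with a positive diagonal. The additive (rather than multiplicative) nature of the correction is visible in the term $-\delta^{2}(1-\delta y)(x')^{2}e^{i\delta x}$. For a horizontal segment this term produces curvature $\approx1$, which is $\delta^{-1}\cdot O(\delta)$, matching the stated remark.

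Finally, by change of variables $ds_{\eta}=\delta(1+O(\delta))\,d\sigma$, so
\[
  \int_{\eta}\kappa^{2}\,ds=\int_{\xi}\delta^{-2}\bigl(\kappa_{\xi}+O(\delta)\bigr)^{2}\,\delta(1+O(\delta))\,d\sigma .
\]
Expanding the square, the right-hand side is
\[
  \delta^{-1}\!\int_{\xi}\kappa_{\xi}^{2}+\int_{\xi}\bigl(O(\kappa_{\xi}^{2})+O(\abs{\kappa_{\xi}})+O(\delta)\bigr)d\sigma,
\]
and the bounds $\abs{\kappa_{\xi}}\le C_{0}$, $\mathcal{H}^{1}(\xi)\le C_{0}$ make the second integral $O(1)$.

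The only point needing care, and the step I expect to be the main obstacle, is bookkeeping. One must ensure every $O(\cdot)$ depends only on $C_{0}$; this follows from $y\in[0,3]$, $\delta<\tfrac1{10}$ and the two a priori bounds. One must also ensure $C^{1,1}$ regularity suffices: the formula for $\kappa_{\eta}$ holds almost everywhere, and $\Phi_{\delta}$ is smooth, so $\eta\in C^{1,1}$. The hypothesis $x\in[-C_{0},C_{0}]$ is not actually needed for the estimates, only to keep the image within a controlled angular range.
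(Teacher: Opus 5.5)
Your proof is correct. Its first half, $D\Phi_{\delta}=\delta e^{i\delta x}M$ with $M$ a rotation by $\tfrac\pi2$ composed with $\operatorname{diag}(1-\delta y,1)$, is exactly the paper's $D\Phi_{\delta}=\delta R(x)\bigl(I+O(\delta)\bigr)$.

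For the curvature the paper argues perturbatively instead of computing $\det(\eta',\eta'')$:
\begin{itemize}
  \item It writes $\Phi_{\delta}=1+\delta\Phi_{0}$ with $\Phi_{0}(x,y)=(e^{i\delta x}-1)/\delta-y\,e^{i\delta x}$.
  \item It notes that $\Phi_{0}$ converges in $C^{2}$ on the box $[-C_{0},C_{0}]\times[0,3]$ to the rigid motion $(x,y)\mapsto ix-y$.
  \item It concludes that $\Phi_{0}$ changes the curvature of an arc of bounded $C^{1,1}$ norm by $O(\delta)$, and the factor $\delta^{-1}$ comes from the homothety.
\end{itemize}
That argument is shorter, but it compares $\Phi_{\delta}$ with one fixed motion. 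The distance in $C^{1}$ involves $\abs{e^{i\delta x}-1}\le\delta\abs{x}$, which is small only for bounded $x$. This is where the paper uses the horizontal confinement of $\xi$.

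Your computation factors out the $x$-dependent rotation $e^{i\delta x}$ exactly, using rotation invariance of $\det(\eta',\eta'')$. As a result your constants are uniform in $x$, and you are right that $x\in[-C_{0},C_{0}]$ plays no role in the estimates themselves. Your route also isolates the source of the additive correction, the term $-\delta^{2}(1-\delta y)(x')^{2}e^{i\delta x}$, which gives the exact curvature $1/(1-\delta y)$ for horizontal segments. Finally, you carry out the expansion of $\int\kappa^{2}\,ds$ that the paper covers only with the word ``consequently''.
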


\begin{proof}
  A direct computation gives $D\Phi_{\delta}=\delta\,R(x)\bigl(I+O(\delta)\bigr)$ with $R(x)$ orthogonal, which is the
  first assertion.
  For the curvature, write $\Phi_{\delta}=1+\delta\Phi_{0}$ with
  $\Phi_{0}(x,y)=\bigl(e^{i\delta x}-1\bigr)/\delta-y\,e^{i\delta x}$, which converges in $C^{2}$ on the
  bounded set $[-C_{0},C_{0}]\times[0,3]$ to the Euclidean motion $(x,y)\mapsto ix-y$; hence for an arc $\connarc$ of bounded $C^{1,1}$ norm
  $\kappa_{\Phi_{\delta}(\connarc)}=\delta^{-1}\bigl(\kappa_{\connarc}+O(\delta)\bigr)$.
\end{proof}

\begin{lemma}
  \label{lem:connector}
  There is an embedded $C^{1,1}$ arc $\Xi$ joining the inner endpoint of $\Gamma_{A}$ to that of $\Gamma_{B}$
  with matching tangents, meeting the bundle only at those endpoints, with
  \[
    \mathcal{H}^{1}(\Xi)\le C,\qquad \int_{\Xi}\kappa^{2}\,ds\le C .
  \]
\end{lemma}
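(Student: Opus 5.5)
We construct the connector explicitly: a fixed $S$-shaped template in the interior, joined to the bundle by two short transition arcs that stay outside the bundle. The inner endpoints are $P_{A}=(1-d_{A})e^{i\theta_{A}}$ and $P_{B}=(1-d_{B})e^{i\theta_{B}}$, with $d_{A},d_{B}\le pN+p\le 2\Delta^{11/4}$ and $\abs{d_{A}-d_{B}}=\tfrac{3p}{4}$. Since $\theta_{A}\equiv\tfrac\pi2$ and $\theta_{B}\equiv-\tfrac\pi2 \pmod{2\pi}$, we have $P_{A}\approx i$ and $P_{B}\approx -i$. At these points the tangents of $\Gamma_{A}$, traversed with $\theta$ increasing, and of $\Gamma_{B}$, traversed with $\theta$ decreasing, are both close to $-e_{1}$: the tangent along $\Gamma_{A}$ is $\approx ie^{i\theta_{A}}=-1$, and along $\Gamma_{B}$ it is $\approx -ie^{i\theta_{B}}=-1$, up to $O(p)$ tilts from $u'=p/2\pi$. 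Near these endpoints the bundle is a union of affine spirals with no dips, since the nearest window is at angular distance $\tfrac\pi2>w$. The innermost strand at each endpoint is the graph itself, so every point of depth greater than $d_{A}$ (resp.\ $d_{B}$) on the ray through $P_{A}$ (resp.\ $P_{B}$) is free of the bundle.

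\emph{Template.} We fix once and for all a smooth embedded arc $\Xi_{0}$ from $i$ to $-i$ with tangent $-e_{1}$ at both ends. Concretely, it leaves $i$ heading left, turns down through the disk $\{\abs z\le\tfrac12\}$, and arrives at $-i$ heading left. Since the entry and exit tangents are parallel, it can be an $S$-curve: turn clockwise from direction $-e_{1}$ to $-i$, descend along the imaginary axis, then turn counterclockwise back to $-e_{1}$. We require that it leave the annulus $\{\abs z\ge 1-\tfrac1{10}\}$ radially within an $O(1)$ distance of its endpoints, and that it have curvature $\le C$ and length $\le C$. Near each end we let the template coincide with the unit-circle arc over a small fixed angular interval $[\theta_{A},\theta_{A}+\eta]$ (resp.\ $[\theta_{B}-\eta,\theta_{B}]$). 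Equivalently, we write that end as a polar graph $u=\sigma(\theta)$ with $\sigma$ smooth, $\sigma\equiv 0$ near the endpoint, $\sigma$ increasing to about $\tfrac1{10}$ at $\theta_{A}+\eta$, and the template continuing inward beyond that.

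\emph{Adjusting to the true endpoints.} On the angular interval $[\theta_{A},\theta_{A}+\eta]$ we replace $\sigma$ by $u=\sigma+\chi\,u_{A}^{\mathrm{ext}}$, where $u_{A}^{\mathrm{ext}}$ is the affine continuation $\tfrac p2+\tfrac{p\theta}{2\pi}$ and $\chi$ is a cutoff equal to $1$ near $\theta_{A}$ and $0$ near $\theta_{A}+\eta$. This matches $u_{A}$ to first and second order at $\theta_{A}$, giving a $C^{1,1}$ junction with matching tangent. The perturbation is $O(p)$ in $C^{2}$, so the curvature and length bounds persist. We do the same at $\Gamma_{B}$ with $u_{B}$, and modify the interior part of $\Xi_{0}$ by an $O(p)$ smooth perturbation so the two halves still meet.

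\emph{Disjointness from the bundle.} This is the main point to check. On $[\theta_{A},\theta_{A}+\eta]$ the bundle strands lying angularly beyond $\theta_{A}$ are those of $\Gamma_{A}$ at angles $\theta-2\pi$ and those of $\Gamma_{B}$. All of them lie at depth at most $u_{A}^{\mathrm{ext}}(\theta)-p+O(p)$, strictly shallower than our arc by a margin $\gtrsim p$, because the spiral's depth increases by $p$ per turn and the innermost strand of $\Gamma_{A}$ ends at $\theta_{A}$. The bundle's inner depth is at most $pN+\Delta\ll\tfrac1{10}$, so once $\sigma$ is a fixed positive amount the arc lies strictly inside. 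The only delicate region is where $\sigma$ is still $O(p)$. There we order the strands at each angle: the baselines of $\Gamma_{A}$ are $\tfrac p2+\tfrac{p\theta}{2\pi}$ and those of $\Gamma_{B}$ are $\tfrac p4+\tfrac{p\theta}{2\pi}$. Our arc adds $\sigma\ge0$ to the extension of the \emph{deepest} baseline at that angle, with the convention that $\Gamma_{A}$'s end, at depth $\tfrac{3p}4$ deeper, is continued and $\Gamma_{B}$ approaches from the other side. Because the two endpoints are antipodal, the extension from $P_{A}$ covers angles past $\theta_{A}$ where $\Gamma_{B}$'s innermost turn has depth $\tfrac p4+\tfrac{p\theta}{2\pi}$ at that absolute angle. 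We check that this is smaller, and use $\sigma\ge0$. The interior portion of the arc lies in $\{\abs z<\tfrac9{10}\}$, free of the bundle, and its two halves are disjoint by construction of $\Xi_{0}$.

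The energy bound follows because curvature is bounded by $C$ on a length bounded by $C$.
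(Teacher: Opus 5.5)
Your construction is essentially the paper's: an $S$-arc through $\{\abs{z}\le\tfrac12\}$, perturbed by $O(p)$, with disjointness resting on the claim that each inner endpoint is the deepest bundle point on its ray. That claim holds at $P_A$ but fails at $P_B$, and the argument breaks there.

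On the ray $\theta=\theta_B$ the last turn of $\Gamma_A$ passes at depth $u_A(\theta_B)=\tfrac p2+pN-\tfrac p4=pN+\tfrac p4$. This is $\tfrac p4$ \emph{deeper} than $P_B$, whose depth is $u_B(\theta_B)=pN$. More generally, for every $\theta\in[\theta_B,\theta_A]$ one has $u_A(\theta)=\tfrac p4+\tfrac{p\theta}{2\pi}+\beta(\theta)+\tfrac p4$, so the last turn of $\Gamma_A$ runs just beneath the continuation of $\Gamma_B$.

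Since $\Gamma_B$ leaves $P_B$ towards decreasing $\theta$, a $C^1$ junction forces the connector to reach $P_B$ from $\theta>\theta_B$. Your interval $[\theta_B-\eta,\theta_B]$ is therefore on the wrong side: that is where $\Gamma_B$ itself lies. On the correct side the connector is trapped between $\Gamma_A(\theta-2\pi)$ and $\Gamma_A(\theta)$, in a strip of width $p$. This strip is empty for $\theta\in(\theta_B,\theta_A)$, and its only short access to the interior is the radial segment at angle $\theta_A$ joining $\Gamma_A(\theta_A-2\pi)$ (depth $pN-\tfrac p4$) to $P_A$ (depth $pN+\tfrac{3p}4$).

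Consequently, a transition on $[\theta_B,\theta_B+\eta]$ that descends from depth $d_B$ to about $\tfrac1{10}$ must cross $\Gamma_A(\theta)$, by the intermediate value theorem. This holds whatever $\sigma\ge0$ and whatever $O(p)$ perturbation you choose. Your ordering step works only at $P_A$, where $u_A^{\mathrm{ext}}$ really is the deepest baseline.

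The paper's own proof has the same defect. Its justification, that $u_A(\theta_A-2\pi j)$ and $u_B(\theta_B-2\pi j)$ decrease in $j$, compares strands of the same spiral only and overlooks $\Gamma_A(\theta_B)$.

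With $\theta_B=2\pi N-\tfrac\pi2$ as written, any admissible connector must pass through that opening at $\theta_A$ and then run half a turn clockwise through the strip to $P_B$. One way: continue $\Gamma_B$ by its own formula through the strip to just past $\theta_A$, let the continuation of $\Gamma_A$ dive inward, and close up by a loop through the interior. This still has length and energy $O(1)$, but it is a lasso with anti-parallel ends, not an $S$-arc.

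The cleaner repair is to change the construction so that the shallower spiral is the longer one: take $\theta_B:=\theta_A+\pi=2\pi N+\tfrac{3\pi}2$. Then
$u_B(\theta_A)=u_A(\theta_A)-\tfrac p4$ and $u_A(\theta_B-2\pi)=u_B(\theta_B)-\tfrac{3p}4$,
so both inner endpoints are innermost on their rays. They remain antipodal, with parallel tangents $\tau_A=\tau_B=e_\theta(\theta_A)$. Your template argument then goes through as written, with two adjustments: put the $B$-side transition on $[\theta_B,\theta_B+\eta]$, and note that the margin at $P_A$ becomes $\tfrac p4$ rather than $p$. The dip count changes by one, which is harmless.
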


\begin{proof}
  Write $q_{A}$ and $q_{B}$ for the two inner endpoints and $\tau_{A},\tau_{B}$ for the unit tangents to be
  matched, taken in the sense in which the competitor traverses the arms.
  They are antipodal up to an error $O(pN)$ and lie at depths at most $2pN\le2\Delta$, by
  \cref{eq:pitchchoice}.
  The tangent to $\Gamma_{A}$ at $q_{A}$ is $e_{\theta}(\theta_{A})+O(p)$, while $\Gamma_{B}$ is traversed with
  $\theta$ decreasing, so $\tau_{B}=-e_{\theta}(\theta_{B})+O(p)=e_{\theta}(\theta_{A})+O(p)$: the two prescribed
  tangents are parallel and point in the same direction.
  Consequently the data $(q_{A},\tau_{A})$ and $(q_{B},\tau_{B})$ are, up to an error $O(pN)$, those of two
  antipodal boundary points with parallel tangents, which are joined inside $\overline{\disk}$ by an $S$-shaped
  arc of length and curvature bounded by absolute constants; take for $\Xi$ any such arc, perturbed by
  $O(pN)$ to match the data exactly.
  No small-radius turn is forced, because the two arms run in opposite senses along the circle and so
  separate immediately.
  The arc may be chosen inside $\{\abs{z}\le\tfrac12\}$ apart from two short segments near its endpoints, hence
  disjoint from the bundle except at $q_{A}$ and $q_{B}$: indeed at the angles $\theta_{A}$ and $\theta_{B}$ the
  deepest point of the bundle is the corresponding endpoint, because $u_{A}(\theta_{A}-2\pi j)$ and
  $u_{B}(\theta_{B}-2\pi j)$ decrease in $j$.
\end{proof}

\subsection{Assembly}
\label{sec:upper-assembly}

Each of the two pieces is now taken to be the minimiser of its model problem: the dip profile is the quartic
of \cref{lem:quartic}, and the fold is the free elastica of \cref{lem:fold}.
The resulting competitor has an excess matching the lower bound of \cref{thm:lowersharp} to leading order,
and in addition exhibits the second term of \cref{eq:expansion}.

\subsubsection*{The optimal dip}

\begin{lemma}
  \label{lem:quartic}
  Let $v$ be defined on $[-\quartrad,\quartrad]$, $\quartrad=72^{1/4}$, by $v=\alpha t^{3}-t^{4}/24$ with
  $t=\quartrad-\abs{x}$ and $\alpha=\quartrad/18$, extended by zero.
  Then $v\ge0$, $v(0)=1$, and $v=v'=v''=0$ at $\pm\quartrad$, so that $v\in C^{2,1}(\mathbb{R})$ after
  extension by zero; the third derivative jumps there, by $\quartrad/3$, and
  \[
    \int\bigl(2v+(v'')^{2}\bigr)=\dipconst=\frac{4\quartrad^{5}}{135},
    \qquad
    \int (v')^{2}=\dipcorr:=\frac{\quartrad^{7}}{1890}=0.9416078\ldots
  \]
\end{lemma}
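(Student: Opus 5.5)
The plan is a direct verification. By evenness it suffices to work in the variable $t=\quartrad-\abs{x}\in[0,\quartrad]$, i.e.\ on $x\in[0,\quartrad]$, where $dx=-dt$. In this variable $v=\alpha t^{3}-t^{4}/24$ has
\[
  \frac{dv}{dt}=3\alpha t^{2}-\frac{t^{3}}6,\qquad
  \frac{d^{2}v}{dt^{2}}=6\alpha t-\frac{t^{2}}2,\qquad
  \frac{d^{3}v}{dt^{3}}=6\alpha-t ,
\]
and $v''$ in $x$ equals $d^{2}v/dt^{2}$, while $v'$ and $v'''$ change sign. Throughout I use the two relations $\quartrad^{4}=72$ and $\alpha=\quartrad/18$.

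\emph{Pointwise properties.} These come in four checks.
\begin{itemize}
  \item $v\ge0$: we have $v=t^{3}(\alpha-t/24)$, which is nonnegative for $0\le t\le24\alpha=\tfrac43\quartrad$, and this range contains $[0,\quartrad]$.
  \item $v(0)=1$: at $x=0$ we have $t=\quartrad$, so $v=\quartrad^{4}/18-\quartrad^{4}/24=\quartrad^{4}/72=1$.
  \item Endpoints $x=\pm\quartrad$ (i.e.\ $t=0$): $v$, $v'$ and $v''$ all vanish there, while $\abs{v'''}=6\alpha=\quartrad/3$. Since the extension by zero has $v'''=0$ outside, this is the stated jump.
  \item Centre $x=0$: here $dv/dt=3\alpha\quartrad^{2}-\quartrad^{3}/6=0$, so the even extension is $C^{1}$ at $0$, and $v''$ is even and continuous. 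Hence $v''$ is Lipschitz on $\mathbb{R}$ and $v\in C^{2,1}(\mathbb{R})$. The third derivative also jumps at $x=0$, which is harmless.
\end{itemize}

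\emph{The integrals.} Each is twice the corresponding integral over $t\in[0,\quartrad]$, and each reduces to polynomial integration followed by elimination of $\alpha$. I compute three pieces.
\begin{itemize}
  \item $\int_{0}^{\quartrad}v\,dt=\alpha\quartrad^{4}/4-\quartrad^{5}/120=\quartrad^{5}(1/72-1/120)=\quartrad^{5}/180$.
  \item $\int_{0}^{\quartrad}(v'')^{2}dt=12\alpha^{2}\quartrad^{3}-\tfrac32\alpha\quartrad^{4}+\quartrad^{5}/20=\quartrad^{5}(1/27-1/12+1/20)=\quartrad^{5}/270$.
  \item $\int_{0}^{\quartrad}(v')^{2}dt=\tfrac95\alpha^{2}\quartrad^{5}-\tfrac16\alpha\quartrad^{6}+\quartrad^{7}/252=\quartrad^{7}(1/180-1/108+1/252)=\quartrad^{7}/3780$, using the common denominator $3780$, which gives $21-35+15=1$.
\end{itemize}

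Therefore $\int(2v+(v'')^{2})=2(\quartrad^{5}/90+\quartrad^{5}/270)=4\quartrad^{5}/135$ and $\int(v')^{2}=\quartrad^{7}/1890$. Since $\quartrad^{5}=72\quartrad$, we get $4\quartrad^{5}/135=288\quartrad/135=\tfrac{32}{15}\,72^{1/4}=\dipconst$. The decimal value of $\dipcorr$ then follows by evaluating $72^{7/4}/1890$.

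There is no real obstacle; the only step needing care is the fraction arithmetic with $\alpha=\quartrad/18$ and $\quartrad^{4}=72$. It is also worth remarking, though it is not needed here, why this profile was chosen. It solves $v''''=-1$ on each half-window, the Euler--Lagrange equation of $\int(2v+(v'')^{2})$, with free-boundary conditions $v=v'=v''=0$ at the edge. The value $\quartrad=72^{1/4}$ is then forced by the normalisation $v(0)=1$ together with $v'(0)=0$.
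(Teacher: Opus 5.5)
Your proposal is correct and follows the same route as the paper: a direct elementary verification in the variable $t=\quartrad-\abs{x}$, using $\quartrad^{4}=72$ and $\alpha=\quartrad/18$. The paper only records the one-sided derivatives at the edge and leaves the integrals as ``elementary computations''; you carry out those polynomial integrals explicitly, and the values $\quartrad^{5}/180$, $\quartrad^{5}/270$ and $\quartrad^{7}/3780$ are right.
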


\begin{proof}
  The stated identities are elementary computations, and $v'(0)=0$ forces $\alpha=\quartrad/18$ while
  $v(0)=\quartrad^{4}/72=1$ forces $\quartrad^{4}=72$.
  Writing $t=\quartrad-\abs{x}$ one has $v_{xx}=6\alpha t-\tfrac{t^{2}}2$ and $v_{xxx}=\pm(t-6\alpha)$, whence
  $v_{xx}\to0$ and $v_{xxx}\to\mp\quartrad/3$ as $t\to0$: the profile is $C^{2}$ but not $C^{3}$ at the edge of
  its support.
  This is the correct free-boundary behaviour for a fourth-order obstacle problem, in which the multiplier
  $2+2v''''\ge0$ may carry an atom on the contact set, and it is compatible with the $W^{2,\infty}$ hypothesis
  of \cref{lem:graphexcess}.
  The minimiser of the model problem of \cite[Lem.~4.3]{Wojtowytsch2018} is of class $C^{2,1}\setminus C^{3}$
  for exactly this reason; for the general theory, in which the multiplier is a measure and $W^{3,\infty}$ is
  the optimal regularity, see \cite{CaffarelliFriedman1979,DallAcquaDeckelnick2018}.
\end{proof}

\begin{remark}
  \label{rem:quarticoptimal}
  The profile is not merely convenient but optimal: $v$ minimises $\int_{I}(2u+(u'')^{2})$ among
  $u\in W^{2,2}(I)$ with $u\ge0$ and $\max u\ge1$, on any interval $I$ containing a maximum point at
  distance at least $\quartrad$ from $\partial I$.
  This is \cref{lem:dipsharp} with $A=1$, and it is the same calibration that prices the dips from below in
  \cref{sec:lower}; the upper bound of the present section uses only the value $\dipconst$ of the functional
  at $v$, so nothing here depends on \cref{sec:lower}.
  Obstacle problems for the elastic energy of graphs, of which \cref{lem:dipsharp} is an explicit instance, are
  studied in \cite{DallAcquaDeckelnick2018,Mueller2019,Yoshizawa2021} and, for the $p$-elastic energy, in
  \cite{DallAcquaMuellerOkabeYoshizawa2024}; the loss of regularity at the free boundary is analysed in detail in
  \cite{Yoshizawa2021}.
\end{remark}

The dip carried by each strand is $\beta(\theta):=\Delta\,v\bigl(\theta\Delta^{-1/4}\bigr)$, of depth $\Delta$
and angular width $2\quartrad\Delta^{1/4}$.
Its cost requires \cref{lem:graphexcess} one order further than stated there.

\begin{lemma}
  \label{lem:dipcost}
  Let $u=u_{0}+\beta$ with $u_{0}$ affine and
  \[
    \norm{u_{0}}_{\infty}\le C_{0}\Delta,\qquad \abs{u_{0}'}\le C_{0}\Delta ,
  \]
  and let $I$ be a window at whose endpoints $\beta$ vanishes together with $\beta'$, so that $u'$ takes
  equal values there.
  Then, with constants depending on $C_{0}$, the excess carried by the polar graph of $u$ over $I$, in excess
  of that of $u_{0}$, equals
  \[
    \int_{I}\Bigl(2\beta+(\beta'')^{2}-3(\beta')^{2}\Bigr)d\theta+O\bigl(\Delta^{9/4}\bigr)
    \;=\;\dipconst\Delta^{5/4}-3\dipcorr\,\Delta^{7/4}+O\bigl(\Delta^{9/4}\bigr).
  \]
\end{lemma}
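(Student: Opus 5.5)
The proof will be a direct expansion of the exact polar-graph energy, carried one order beyond \cref{lem:graphexcess} and restricted to the terms that couple to $\beta$.

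\emph{Exact integrand.} For the graph $\theta\mapsto(1-u)e^{i\theta}$ write $\rho=1-u$. Then
\[
  ds=\sqrt{\rho^{2}+\rho'^{2}}\,d\theta,\qquad
  \kappa=\frac{\rho^{2}+2\rho'^{2}-\rho\rho''}{(\rho^{2}+\rho'^{2})^{3/2}},
\]
so the excess density is
\[
  F(u,u',u''):=\frac{(\rho^{2}+2\rho'^{2}-\rho\rho'')^{2}}{(\rho^{2}+\rho'^{2})^{5/2}}-\sqrt{\rho^{2}+\rho'^{2}} .
\]
I will Taylor-expand $F$ about $(0,0,0)$ to the order needed, using the scaling $\beta\sim\Delta$, $\beta'\sim\Delta^{3/4}$, $\beta''\sim\Delta^{1/2}$ on a window of length $\asymp\Delta^{1/4}$.

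\emph{Monomial count.} A monomial $u^{a}(u')^{b}(u'')^{c}$ integrated over the window has size $\Delta^{a+3b/4+c/2+1/4}$. We must keep every monomial of size $\gg\Delta^{9/4}$, that is, with $a+3b/4+c/2<2$.

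The expansion of the curvature gives $\kappa=1+u+u''+\tfrac12 u'^{2}+(\text{cubic})$, and $ds=(1-u+\tfrac12u'^{2}+\dots)\,d\theta$. Collecting terms through cubic order in the weighted sense, the density becomes
\[
  2u+2u''+u''^{2}+u^{2}+4uu''+c_{1}u'^{2}+c_{2}u'^{2}u''+c_{3}u''^{3}+c_{4}uu''^{2}+\cdots .
\]
I will identify the coefficients explicitly. After integration by parts, $\int uu''=-\int u'^{2}$ up to boundary terms, so the net coefficient of $u'^{2}$ should come out as $-3$. The monomial $u''^{3}$ has weight $3/2$ and so must also be kept. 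However, for the even profile $\beta''$, $\int\beta''^{3}$ is not zero, so I expect the exact coefficient of $u''^{3}$ to vanish; checking this cancellation is the main obstacle. Similarly $u'^{2}u''=\tfrac13(u'^{3})'$ integrates to a boundary term, and $u'^{4}$ and $uu''^{2}$ have weight $2$, hence are $O(\Delta^{9/4})$ and negligible.

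\emph{Cancellation check and subtraction of $u_0$.} I will verify the vanishing of the $u''^{3}$ coefficient by expanding $F$ in $u''$ at $u=u'=0$: there $F=(1+u'')^{2}-1=2u''+u''^{2}$ exactly, which is a polynomial of degree two, so no cubic term appears.

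Next I substitute $u=u_{0}+\beta$ and subtract the contribution of $u_{0}$. Since $u_{0}$ is affine, the cross terms are $u_{0}\beta$, $u_{0}\beta''$, $u_{0}'\beta'$ and $u_{0}\beta''^{2}$. These are of size at most $\Delta^{2+1/4}$, $\Delta\cdot\Delta^{1/2}\cdot\Delta^{1/4}$ (which integrates by parts to a boundary term, since $\beta'=0$ at the endpoints), $\Delta\cdot\Delta^{3/4}\cdot\Delta^{1/4}$, and $\Delta\cdot\Delta\cdot\Delta^{1/4}$ respectively. All are $O(\Delta^{2})$, and I will refine them using the endpoint vanishing and $\int\beta'=0$; the expected outcome is $O(\Delta^{9/4})$.

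The terms $2\beta''$ and $\beta^{2}$ integrate to $0$ and to $O(\Delta^{9/4})$, respectively. This leaves
\[
  \int_{I}\bigl(2\beta+\beta''^{2}-3\beta'^{2}\bigr)\,d\theta+O(\Delta^{9/4}).
\]

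\emph{Scaling.} Finally, substituting $\theta=\Delta^{1/4}x$ turns this integral into $\Delta^{5/4}\int(2v+v''^{2})-3\Delta^{7/4}\int v'^{2}$. By \cref{lem:quartic} this equals $\dipconst\,\Delta^{5/4}-3\dipcorr\,\Delta^{7/4}$, which is the claimed expansion.
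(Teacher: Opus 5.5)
Your route is the same as the paper's. You expand the exact excess density of the polar graph one order beyond \cref{lem:graphexcess}, trade $\int 4uu''$ for $-4\int(u')^{2}$, dispose of the cross terms with $u_{0}$, and rescale by \cref{lem:quartic}. Your check that no $(u'')^{3}$ term survives is worth having. The paper's claim that the neglected integrand is $O(\Delta^{2})$ silently relies on it, and $\int(v'')^{3}\neq0$ for the quartic. In fact the density $(1-a)a^{-1/2}+2b\,a^{-3/2}+b^{2}a^{-5/2}$, with $a,b$ as in \cref{sec:appA}, is exactly quadratic in $u''$ and depends on $u'$ only through $(u')^{2}$. The latter fact also removes the odd monomials $u'$, $u'u''$, $uu'$, $u'(u'')^{2}$ of weight below $2$, which your count admits but your list never mentions.

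The gap is in the one coefficient the lemma is really about. The curvature expansion you state, $\kappa=1+u+u''+\tfrac12(u')^{2}+(\text{cubic})$, is wrong. For constant $u$ one has $\kappa=(1-u)^{-1}$, and wherever $u'=0$ one has $\kappa=(1-u)^{-1}+u''(1-u)^{-2}$. Hence $\kappa=1+u+u''+u^{2}+\tfrac12(u')^{2}+2uu''+\dots$, and the omitted $2uu''$ has the same weight $3/2$ as the term you kept. Multiplying your expansion by $ds=(1-u+\tfrac12(u')^{2}+\dots)\,d\theta$ produces no $uu''$ term at all, so the net coefficient of $(u')^{2}$ would come out $+1$, not $-3$. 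The $4uu''$ in your displayed density therefore does not follow from your own expansion, and $c_{1}$ is inferred from the statement rather than computed. Done correctly, $(1-a)a^{-1/2}=2u+u^{2}-(u')^{2}+\dots$ and $2b\,a^{-3/2}=2u''+2(u')^{2}+4uu''+\dots$, while $b^{2}a^{-5/2}$ contributes no $(u')^{2}$. So $c_{1}=1$, and the integration by parts gives $1-4=-3$.

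With that repaired, the rest is sound, subject to two small corrections. First, $\int u_{0}\beta''$ is a priori of size $\Delta^{7/4}$, not $O(\Delta^{2})$. It and $\int u_{0}'\beta'$ both vanish exactly, because $\beta$ and $\beta'$ vanish at $\partial I$. Second, if you subtract $u_{0}$ at the level of the density, as your cross-term list suggests, the $uu''$ term contributes $4u_{0}\beta''+4\beta\beta''$. Its integral is exactly $-4\int(\beta')^{2}$, with no boundary term, so the paper's bound $4(u_{0}')^{2}\abs{I}=O(\Delta^{9/4})$ is not needed at that point.
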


\begin{proof}
  With $a,b,n$ as in \cref{sec:appA}, the exact density is
  $\tfrac{1-a}{\sqrt a}+\tfrac{2b}{a^{3/2}}+\tfrac{b^{2}}{a^{5/2}}$, and expanding one order beyond
  \cref{lem:graphexcess} gives
  \[
    2u+2u''+(u'')^{2}+\bigl((u')^{2}+4uu''+u^{2}\bigr)+O\bigl(\text{integrand }O(\Delta^{2})\bigr).
  \]
  Over the window $\int2u''=0$, while
  $\int4uu''=\bigl[4uu'\bigr]_{\partial I}-4\int(u')^{2}$ with
  $\bigl[4uu'\bigr]_{\partial I}=4u_{0}'\bigl(u_{0}(b)-u_{0}(a)\bigr)=4(u_{0}')^{2}\abs I=O(\Delta^{9/4})$,
  whence the coefficient $-3$; here the hypothesis $\abs{u_{0}'}\le C_{0}\Delta$ is used, an affine baseline of
  slope only $O(\Delta^{3/4})$ producing a boundary term of the same order $\Delta^{7/4}$ as the term being
  computed.
  The terms in which $u_{0}$ meets $\beta$ are likewise admissible: $\int2u_{0}\beta$ and
  $\int3u_{0}(\beta'')^{2}$ are $O\bigl(\norm{u_{0}}_{\infty}\Delta^{5/4}\bigr)=O(\Delta^{9/4})$, while
  $\int2u_{0}'\beta'$ and $\int(u_{0}')^{2}\beta''$ vanish, $\beta$ and $\beta'$ doing so at $\partial I$.
  The remaining terms integrate to $O(\Delta^{9/4})$ because $\abs{\beta}\le\Delta$,
  $\abs{\beta'}\le C\Delta^{3/4}$, $\abs{\beta''}\le C\Delta^{1/2}$ on a window of width
  $2\quartrad\Delta^{1/4}$.
  The evaluation is \cref{lem:quartic} after the scaling $\beta=\Delta v(\cdot\,\Delta^{-1/4})$.
\end{proof}

\subsubsection*{The optimal fold}

\begin{lemma}
  \label{lem:fold}
  Let $\Sigma_{1}$ be half a period of the free elastica $\kappa^{2}=\dgconst^{2}\sin\vartheta$
  (for the classical theory of elasticae see \cite{LangerSinger1984}) in the strip of \cref{lem:strip}, where $\vartheta\in[0,\pi]$ is the tangent angle, normalised to unit rise and placed so
  that both endpoints lie on the line $x=0$; explicitly,
  \begin{equation}
    \label{eq:foldchartstrip}
    x(\vartheta)=-\frac{2}{\dgconst}\sqrt{\sin\vartheta},
    \qquad
    y(\vartheta)=\frac{1}{\dgconst}\,\widehat G(\vartheta)
      =\frac{1}{\dgconst}\int_{0}^{\vartheta}\sqrt{\sin t}\,dt .
  \end{equation}
  Then $\Sigma_{1}$ is a graph over the depth coordinate $y$, hence embedded; it rises in depth by exactly
  $1$; its two tangents are horizontal and opposite, so that it reverses the sense of rotation; its curvature
  vanishes at both ends; it lies in $\{-\tfrac2\dgconst\le x\le0\}$, of horizontal extent
  $\tfrac2\dgconst=0.8346$, with zero net horizontal displacement; and
  \[
    \int_{\Sigma_{1}}\kappa^{2}\,ds=\dgconst^{2},
    \qquad \mathcal{H}^{1}(\Sigma_{1})=2.1884\ldots
  \]
  For $d\in(0,\tfrac1{10})$, the transplant $\Sigma_{d}:=\Phi_{d}(\Sigma_{1})\subset\overline{\disk}$ of
  \cref{lem:strip} has the exact polar description $\theta=d\,x(\vartheta)$, $u=d\,y(\vartheta)$: it rises in
  depth by exactly $d$, occupies the angular sector $-\tfrac{2d}\dgconst\le\theta\le0$, and costs
  \[
    \int_{\Sigma_{d}}\kappa^{2}\,ds\ =\ \frac{\dgconst^{2}}{d}+O(1) .
  \]
\end{lemma}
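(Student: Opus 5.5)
Parametrise $\Sigma_{1}$ by its tangent angle $\vartheta\in[0,\pi]$ and read off every assertion from the explicit formulas \cref{eq:foldchartstrip}. The curvature prescription $\kappa^{2}=\dgconst^{2}\sin\vartheta$ gives $ds=d\vartheta/\kappa=d\vartheta/(\dgconst\sqrt{\sin\vartheta})$.

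First, check that the formulas describe this elastica. Take the tangent to be $(\cos\vartheta,\sin\vartheta)$ in the $(x,y)$ coordinates, with $y$ the depth. Then $dx/d\vartheta=\cos\vartheta/(\dgconst\sqrt{\sin\vartheta})$, whose antiderivative is $\tfrac{2}{\dgconst}\sqrt{\sin\vartheta}$ up to a constant, and $dy/d\vartheta=\sqrt{\sin\vartheta}/\dgconst$. The overall sign of $x$ is fixed by the orientation convention, so that the curve bulges toward negative $x$ as in \cref{eq:foldchartstrip}.

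With this parametrisation the properties follow directly.
\begin{itemize}
  \item Since $dy/d\vartheta>0$ on $(0,\pi)$, $y$ is strictly increasing, so $\Sigma_{1}$ is a graph over $y$ and is embedded.
  \item The rise is $y(\pi)-y(0)=\dgconst^{-1}\int_{0}^{\pi}\sqrt{\sin t}\,dt=1$ by the definition of $\dgconst$.
  \item The end tangents are $(1,0)$ and $(-1,0)$: horizontal and opposite, so the sense of rotation is reversed.
  \item The curvature $\dgconst\sqrt{\sin\vartheta}$ vanishes at $\vartheta=0,\pi$.
  \item $x$ ranges over $[-2/\dgconst,0]$, with $x(0)=x(\pi)=0$, which gives zero net horizontal displacement.
\end{itemize}

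The energy is $\int\kappa^{2}\,ds=\int_{0}^{\pi}\kappa\,d\vartheta=\dgconst\int_{0}^{\pi}\sqrt{\sin\vartheta}\,d\vartheta=\dgconst^{2}$. The length is $\dgconst^{-1}\int_{0}^{\pi}(\sin\vartheta)^{-1/2}\,d\vartheta$, a Beta integral evaluated numerically. The closed form of $\dgconst$ also comes from a Beta function: $\int_{0}^{\pi}\sin^{1/2}=B(\tfrac34,\tfrac12)$, simplified via the reflection and duplication formulas.

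A technical point is that $\Sigma_{1}$ should be $C^{1,1}$ with bounded curvature and finite length, as \cref{lem:strip} requires. This holds: $\kappa$ is bounded by $\dgconst$, and $ds$ has integrable singularities at the endpoints. Also $\Sigma_{1}\subset[-C_{0},C_{0}]\times[0,1]\subset\mathcal S$.

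For the transplant, \cref{lem:strip} states that $\Phi_{d}$ maps depth and angle exactly, $1-\abs{\Phi_{d}(x,y)}=dy$ and $\arg\Phi_{d}=dx$. Hence $\Sigma_{d}$ is described by $\theta=d\,x(\vartheta)$, $u=d\,y(\vartheta)$. It rises by exactly $d$ and occupies $-2d/\dgconst\le\theta\le0$. Injectivity of $\Phi_{d}$ on the strip, for $d<\tfrac1{10}$ and bounded $x$, preserves embeddedness. The energy estimate $\dgconst^{2}/d+O(1)$ is then the final assertion of \cref{lem:strip} with $C_{0}$ an absolute constant.

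The main obstacle is only bookkeeping: verifying that the orientation and sign conventions make the tangents horizontal and opposite with the curve on the correct side, and checking the regularity at the endpoints so that \cref{lem:strip} applies. The rest is direct computation.
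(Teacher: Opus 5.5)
Your proposal is correct and follows essentially the paper's proof: you read every property off the chart via $ds=d\vartheta/\kappa$, compute the energy as $\int\kappa\,d\vartheta$ and the length as a Beta integral, and invoke \cref{lem:strip} for the transplant. The paper builds the sign directly into $dx/d\vartheta=-\cos\vartheta/\kappa$ instead of reflecting afterwards, and it also derives $\kappa^{2}=\mu\sin\vartheta$ with $\mu=\dgconst^{2}$ from the constrained minimisation; that derivation explains why this curve is chosen but is not needed for the assertions as stated.
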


\begin{proof}
  Minimising $\int(\vartheta')^{2}ds$ with $\vartheta$ running from $0$ to $\pi$ and free length, under the
  constraint that the rise $\int\sin\vartheta\,ds$ be prescribed, gives $2\kappa'=\mu\cos\vartheta$ and hence
  the first integral $\kappa^{2}=\mu\sin\vartheta$, the additive constant vanishing because $\kappa\to0$ at
  the free ends.
  Then $\int\kappa^{2}ds=\int\kappa\,d\vartheta=\sqrt\mu\int_{0}^{\pi}\sqrt{\sin\vartheta}\,d\vartheta
  =\sqrt\mu\,\dgconst$ and rise $=\int\sin\vartheta\,ds=\mu^{-1/2}\dgconst$, so unit rise forces
  $\mu=\dgconst^{2}$, and the energy is $\dgconst^{2}$.
  The chart \cref{eq:foldchartstrip} follows from $dy/d\vartheta=\sin\vartheta/\kappa=\sqrt{\sin\vartheta}/\dgconst$
  and $dx/d\vartheta=-\cos\vartheta/\kappa=-\cos\vartheta/(\dgconst\sqrt{\sin\vartheta})$, integrated from
  $\vartheta=0$; in particular $x(0)=x(\pi)=0$ and $x$ attains its minimum $-\tfrac2\dgconst$ at
  $\vartheta=\tfrac\pi2$, which gives the extent, the sector, and the vanishing net displacement.
  The length is $\dgconst^{-1}\int_{0}^{\pi}(\sin\vartheta)^{-1/2}d\vartheta=B(\tfrac14,\tfrac12)/\dgconst
  =2.1884\ldots$
  Since $\sin\vartheta\ge0$ throughout, the depth is monotone along $\Sigma_{1}$, which is therefore a graph
  over it.
  For the transplant, \cref{lem:strip} sends depth and angle exactly, $u=d\,y$ and $\theta=d\,x$, and applies
  to $\Sigma_{1}$, whose curvature is bounded by $\dgconst$ and whose length and horizontal extent are bounded
  by absolute constants; it gives
  $\int_{\Sigma_{d}}\kappa^{2}=d^{-1}\!\int_{\Sigma_{1}}\kappa^{2}+O(1)$.
\end{proof}

\begin{remark}
  \label{rem:foldlegs}
  The two legs of $\Sigma_{d}$ lie at \emph{different} depths, separated by the full rise $d$: a $\pi$-turn
  with monotone tangent angle changes the depth by $\int\sin\vartheta\,ds>0$.
  A fold whose two legs sit at the same depth must therefore turn by more than $\pi$,
  and cannot achieve $\dgconst^{2}/d$.
  This is what dictates the arrangement below: the strand delivered by the shallow end of the fold, at the
  outer baseline $\tfrac p4$, is the outermost one, and it is the one strand that does not dip.
  The same one-dimensional problem --- least bending energy of a $\pi$-turn of prescribed transverse span and
  free length --- arises in the theory of stiff polymers confined to nanochannels, where it is treated by
  elliptic integrals \cite[\S II]{Odijk2006}.
\end{remark}

\subsubsection*{The competitor, and the sharp upper bound}

\begin{proposition}
  \label{prop:upperconstruction}
  Let $\Delta\in(0,\Delta_{0})$ and $N$ satisfy \cref{eq:pitchchoice}, and let $\Gamma_{A},\Gamma_{B}$ be the
  interleaved spirals of \cref{eq:spirals}, every turn carrying the dip $\beta$ except the outermost, at
  baseline $\tfrac p4$, which crosses its window flat.
  Join the bottom of the outermost dip of $\Gamma_{A}$ to the outer end of the flat strand by the arc
  $\Sigma_{d}$ of \cref{lem:fold} with $d:=\Delta+\tfrac p4$, smoothing the two junctions as described in the
  proof, and join the two inner ends by the arc of \cref{lem:connector}.
  The result is a closed embedded $W^{2,2}$ curve, of length $L=F(N,\Delta)$ with
  $\abs{F(N+1,\Delta)-F(N,\Delta)-4\pi}\le C\Delta^{5/4}$, and
  \[
    \excess\ \le\ \frac{\dgconst^{2}}{\Delta}
      +\bigl(2N-1\bigr)\Bigl(\dipconst\Delta^{5/4}-3\dipcorr\Delta^{7/4}\Bigr)+C .
  \]
\end{proposition}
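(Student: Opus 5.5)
The proof assembles the three pieces and adds up their costs, using the lemmas already established. The order is: (i) check embeddedness and $W^{2,2}$ regularity; (ii) sum the excess, piece by piece; (iii) compute the length increment.

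\emph{Geometry of the fold.} Place $\Sigma_{d}$, $d=\Delta+\tfrac p4$, with its shallow end at depth $\tfrac p4$ on the flat strand at angle $0$. Its deep end then sits at depth $\tfrac p4+d=\tfrac p2+\Delta$, which is exactly the bottom of the outermost dip of $\Gamma_{A}$. By \cref{lem:fold}, $\Sigma_{d}$ lies in the sector $-\tfrac{2d}{\dgconst}\le\theta\le0$, and this sector is contained in the window $\abs\theta\le w$ because $d\ll\Delta^{1/4}$.

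To prove embeddedness I would show that $\Sigma_{d}$ lies in the lens $\{\tfrac p4\le u\le\tfrac p2+\beta(\theta)\}$. Near $\theta=0$ we have $\beta(\theta)=\Delta-O(\Delta^{1/2}\theta^{2})$, and on the fold sector $\theta=O(\Delta)$. So the upper boundary of the lens drops below $\tfrac p2+\Delta$ by only $O(\Delta^{5/2})$. The fold is a graph over depth whose deep leg is horizontal, and its tangent angle behaves like $\sqrt{\text{depth gap}}$, so near its deep end it is separated from the dipping strand except within a region of size $O(\Delta^{11/2})$. This penetration depth is where the pitch $p=\Delta^{5}$ matters. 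I expect to need the pitch condition $p\gg\Delta^{11/2}$ for this, and I would handle it by shifting the fold by $O(p)$, or by letting the fold be tangent at its deep end to the strand it continues, so that only the strand it merges into is touched.

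\emph{Junctions.} Both junctions join tangentially, since tangents are horizontal at the fold ends and the strands have slope $O(p)$ and zero $u'$ at the dip bottom. Curvature is also continuous up to $O(1)$: the fold has $\kappa=0$ in the chart, i.e.\ $\approx1$ after the transplant by \cref{lem:strip}, and the strand has $\kappa\approx1$. I would smooth each junction over an arclength $O(\Delta)$, which costs $O(1)$ in $\int\kappa^{2}$. Away from the fold, embeddedness follows from the ordering of the baselines, as noted after \cref{eq:spirals}, together with \cref{lem:connector}.

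\emph{Excess.} The excess $\excess$ is the bending energy minus the length, summed over the pieces. I would use the following bounds.
\begin{itemize}
\item The fold contributes $\dgconst^{2}/d+O(1)-\mathcal{H}^{1}(\Sigma_{d})$, and $\dgconst^{2}/d\le\dgconst^{2}/\Delta$.
\item The connector contributes $O(1)$.
\item Each of the $2N-1$ complete dips contributes $\dipconst\Delta^{5/4}-3\dipcorr\Delta^{7/4}+O(\Delta^{9/4})$ by \cref{lem:dipcost}. Its hypotheses hold with $u_{0}$ the affine baseline, of size $O(\Delta)$ and slope $p/2\pi$.
\item The outermost half-window of $\Gamma_{A}$ on $[0,w]$ costs $O(\Delta^{5/4})$.
\item The flat parts cost $C\Delta^{5}N^{2}\le C$, by \cref{lem:bundle} and \cref{eq:pitchchoice}.
\end{itemize}
The accumulated error $N\Delta^{9/4}\le1$ is also absorbed into $C$, again by \cref{eq:pitchchoice}. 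Adding these gives the stated bound on $\excess$.

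\emph{Length.} Replacing $N$ by $N+1$ adds one turn to each spiral. This adds $4\pi$ of angular length minus the depth integrals; by \cref{lem:bundle} the new dips change these by $O(\Delta^{5/4})$. The pitch terms change by $O(pN)=O(\Delta^{11/4})$, since the baselines shift. The fold and connector change by $O(pN)$ as well. Together these give $\abs{F(N+1,\Delta)-F(N,\Delta)-4\pi}\le C\Delta^{5/4}$.

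The main obstacle is the embeddedness of the fold inside the lens near its deep end, where the fold clearance is compared against the pitch.
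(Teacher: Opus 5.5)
Your architecture is the paper's: fold geometry, ordering of the bundle, a clearance estimate for the fold, smoothing of the junctions, then summation. Your energy and length bookkeeping is also the paper's: the $2N-1$ dips via \cref{lem:dipcost} with $u_{0}'=p/2\pi$, the errors $O(N\Delta^{9/4})+O(\Delta^{5}N^{2})=O(1)$ by \cref{eq:pitchchoice}, $\dgconst^{2}/d\le\dgconst^{2}/\Delta$, and the increment of $F$ from \cref{lem:bundle}.

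\textbf{The gap.} It lies in the one step that is not bookkeeping, the embeddedness of $\Sigma_{d}$ near its deep end, which you flag but do not prove. As your own $O(\Delta^{11/2})$ penetration shows, the target $\Sigma_{d}\subset\{\tfrac p4\le u\le\tfrac p2+\beta(\theta)\}$ is false there. Shifting the fold by $O(p)$ is not a fix as stated, because $d=\Delta+\tfrac p4$ is chosen so that \emph{both} ends of $\Sigma_{d}$ land exactly on the strands they join.

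\textbf{What is missing.} The curve $u=\tfrac p2+\beta(\theta)$ is not occupied at the fold's angles. $\Sigma_{d}$ lies in $-\tfrac{2d}{\dgconst}\le\theta\le0$, where the outermost turn of $\Gamma_{A}$ (which starts at $\theta=0$) and the flat strand are both absent. The only strands above the fold there are the turns $k\ge1$ of both spirals, one pitch further in, at depth at least $\tfrac{5p}{4}+\Delta v(z)-O(pd)$ with $z=\theta\Delta^{-1/4}$.

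Write fold points as $(\theta,u)=(dx,\tfrac p4+dy)$. It then suffices to prove $\Delta\bigl(y-v(z)\bigr)\le\tfrac p2$, which leaves a margin $\tfrac p4$.
\begin{itemize}
\item On the shallow half this is immediate: $y\le\tfrac12$ while $v(z)\ge\tfrac34$, since $\abs z\le\Delta^{1/2}$.
\item On the deep half, compare the cubic flatness of the elastica at its deep end, $1-y\ge\tfrac{\dgconst^{2}}{24}\abs x^{3}$, with the Taylor bound $1-v(z)\le\tfrac{\quartrad^{2}}{12}z^{2}+\tfrac{\quartrad}{9}\abs z^{3}$ and $\abs z\le\tfrac54\Delta^{3/4}\abs x$. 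This gives $\Delta(y-v)\le2\Delta^{5/2}x^{2}-\tfrac{\dgconst^{2}}{24}\Delta\abs x^{3}$ for $\Delta_{0}$ small. That is $\le0$ when $\abs x\ge48\dgconst^{-2}\Delta^{3/2}$, and $\le C\Delta^{11/2}=C\Delta^{1/2}p\le\tfrac p2$ otherwise.
\end{itemize}
The mechanism is this cubic law, tangent angle $\asymp(\text{depth gap})^{2/3}$, not the square-root law you state. Under a square-root law the fold would be quadratically curved at its deep end with coefficient $\asymp1/d\gg\Delta^{1/2}$, and there would be no penetration to absorb. Your second remedy, that only the strand the fold merges into is touched, points at the right geometry, but the clearance against the turns $k\ge1$ is the actual content and is not given.

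\textbf{Smaller points.} The junctions are not tangential. At $\theta=0$ the strands have $u'=p/2\pi$ (the dip contributes $\beta'(0)=0$, the baseline does not), while both ends of $\Sigma_{d}$ are horizontal. So each junction is a corner of angle at most $p$ and must be rounded to obtain a $W^{2,2}$ curve. The rounding must also be checked against the $\tfrac p4$ margins. The paper uses an arc of length at most $p$, which costs at most $p$ and moves the curve by $O(p^{2})$. Your smoothing over length $O(\Delta)$ moves it by $O(p\Delta)$, which also fits, but this has to be said. Finally, in the length increment the fold does not change with $N$ at all.
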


\begin{proof}
  \emph{Geometry of the fold.}
  The rise from the outer end $(\theta,u)=(0,\tfrac p4)$ of the flat strand to the bottom
  $(0,\tfrac p2+\Delta)$ of the outermost dip of $\Gamma_{A}$ is exactly
  $\tfrac p2+\Delta-\tfrac p4=d$, and by \cref{lem:fold} the fold placed between them has the exact polar
  description
  \begin{equation}
    \label{eq:foldchart}
    \theta=d\,x(\vartheta)=-\frac{2d}{\dgconst}\sqrt{\sin\vartheta},
    \qquad
    u=\frac p4+d\,y(\vartheta),
    \qquad \vartheta\in[0,\pi] :
  \end{equation}
  it lies in the sector $-\tfrac{2d}\dgconst\le\theta\le0$, of angular width $\tfrac{2d}\dgconst=0.8346\,d$,
  on the opposite side of $\theta=0$ from the half-window $[0,w]$, and at depths in
  $[\tfrac p4,\tfrac p2+\Delta]$.

  \emph{Embeddedness of the bundle.}
  The dipping strands are the graphs $u_{k}=\text{baseline}_{k}+\beta$ with baselines strictly increasing by
  steps of at least $\tfrac p4$ and $\beta$ common to all, hence ordered and disjoint; the flat strand has
  $u\le\tfrac p4+\tfrac{p\theta}{2\pi}<\tfrac p2+\tfrac{p\theta}{2\pi}\le u_{k}$ at every common angle, since
  $\beta\ge0$, so it lies strictly outside all of them.

  \emph{The junction estimate.}
  Fix a point of $\Sigma_{d}$, of angle $\theta\in[-\tfrac{2d}\dgconst,0]$ and depth $\tfrac p4+dy$.
  At that angle, that is at $\theta+2\pi\in[2\pi-\tfrac{2d}\dgconst,2\pi]$, the competitor's strands are the
  turns $k\ge1$ of both spirals, whose baselines are at least $\tfrac p4+p$; the dip $\beta$ being common to
  all of them, each such strand lies at depth at least
  $\tfrac{5p}4+\beta(\theta)=\tfrac{5p}4+\Delta v(z)$ with
  \[
    z:=\theta\Delta^{-1/4}=d\,x\,\Delta^{-1/4},
    \qquad
    \abs z\le\tfrac54\Delta^{3/4}\abs x\le\tfrac{5}{2\dgconst}\,\Delta^{3/4}\le\Delta^{1/2},
  \]
  using $d\le\tfrac54\Delta$, $\abs x\le\tfrac2\dgconst$ and $\Delta\le\Delta_{0}$; in particular
  $\abs z\le w\Delta^{-1/4}=\quartrad$, so the point of the strand is inside its dip window.
  It suffices to show
  \begin{equation}
    \label{eq:junction}
    \Delta\bigl(y-v(z)\bigr)\ \le\ \tfrac p2
    \qquad\text{on }\Sigma_{d} ;
  \end{equation}
  indeed then, since $y\le1$,
  \[
    \Bigl(\tfrac{5p}4+\Delta v(z)\Bigr)-\Bigl(\tfrac p4+d\,y\Bigr)
    \ =\ p-\Delta\bigl(y-v(z)\bigr)-\tfrac p4\,y\ \ge\ \tfrac p4 ,
  \]
  and every point of the fold is strictly shallower than every strand at its angle, with margin $\tfrac p4$.

  On the shallow half $\vartheta\le\tfrac\pi2$ one has $y=\widehat G(\vartheta)/\dgconst\le\tfrac12$, while
  the Taylor bound
  \[
    v(z)\ \ge\ 1-\frac{\quartrad^{2}}{12}\,z^{2}-\frac{\quartrad}{9}\,\abs z^{3},
  \]
  valid because $v(0)=1$, $v'(0)=0$, $v''(0)=-\tfrac{\quartrad^{2}}6$ and
  $\norm{v'''}_{\infty}=\tfrac{2\quartrad}3$, gives $v(z)\ge\tfrac34$ for $\Delta_{0}$ small; hence
  $y-v\le-\tfrac14$ and \cref{eq:junction} is clear.
  On the deep half $\vartheta\in[\tfrac\pi2,\pi]$ the fold is cubically flat at its deep end: with
  $\varphi:=\pi-\vartheta\in[0,\tfrac\pi2]$ and $\sin\varphi=\sin\vartheta$,
  \[
    1-y\ =\ \frac1{\dgconst}\int_{0}^{\varphi}\sqrt{\sin t}\,dt
    \ \ge\ \frac{(\sin\varphi)^{3/2}}{3\,\dgconst}
    \ =\ \frac{\dgconst^{2}}{24}\,\abs x^{3},
  \]
  the middle inequality holding because both sides vanish at $\varphi=0$ and the derivative of the left-hand
  side, $\sqrt{\sin\varphi}/\dgconst$, dominates that of the right-hand side,
  $\sqrt{\sin\varphi}\,\cos\varphi/2\dgconst$; the final equality is \cref{eq:foldchart}.
  Combining the two bounds and $\abs z\le\tfrac54\Delta^{3/4}\abs x$, $\abs x\le\tfrac2\dgconst$,
  \[
    \begin{aligned}
      \Delta\bigl(y-v(z)\bigr)
      &\ \le\ \frac{25\,\quartrad^{2}}{192}\,\Delta^{5/2}x^{2}
        +\frac{125\,\quartrad}{288\,\dgconst}\,\Delta^{13/4}x^{2}
        -\frac{\dgconst^{2}}{24}\,\Delta\,\abs x^{3}\\
      &\ \le\ 2\,\Delta^{5/2}x^{2}-\frac{\dgconst^{2}}{24}\,\Delta\,\abs x^{3}
    \end{aligned}
  \]
  for $\Delta_{0}$ small.
  If $\abs x\ge x_{1}:=\tfrac{48}{\dgconst^{2}}\Delta^{3/2}$ the negative term dominates and
  $\Delta(y-v)\le0$; if $\abs x<x_{1}$, then
  $\Delta(y-v)\le2\Delta^{5/2}x_{1}^{2}=2\bigl(\tfrac{48}{\dgconst^{2}}\bigr)^{2}\Delta^{11/2}
  =C\Delta^{1/2}\,p\le\tfrac p2$ for $\Delta_{0}$ small, by \cref{eq:pitchchoice}.
  In both cases \cref{eq:junction} holds.
  The fold moreover lies in $\overline\disk$, its depths being at least $\tfrac p4>0$, and is embedded, being
  a graph over the depth by \cref{lem:fold}; and it is disjoint from the connector, whose excursions outside
  $\{\abs z\le\tfrac12\}$ occur at angles within $O(\Delta)$ of $\theta_{A}$ and $\theta_{B}$, hence at
  angular distance at least $\tfrac\pi4$ from the sector of the fold.

  \emph{Smoothing the junctions.}
  At $\theta=0$ both ends of $\Sigma_{d}$ are horizontal with vanishing curvature, whereas the strands they
  meet have slope $u'=p/2\pi$ there, the dip contributing $\beta'(0)=0$: positions match, but the tangents
  differ by an angle $\alpha\le p$.
  At each of the two junctions, replace the corner by an arc of curvature $1$ and length $\alpha$ tangent to
  both branches: this costs $\int\kappa^{2}\le\alpha\le p$ per junction, restores the $C^{1}$ regularity with
  bounded curvature there, and displaces the curve by at most $C\alpha^{2}\le Cp^{2}$, far below the margin
  $\tfrac p4$ of the junction estimate and the gap $\tfrac p4$ between consecutive strands, so embeddedness is
  preserved.
  Away from the junctions each piece is of class $W^{2,2}$, so the closed concatenation is an embedded
  $W^{2,2}$ curve.

  \emph{Energy.}
  The baselines of the strands are affine with $\norm{u_{0}}_{\infty}\le3\Delta$, as in the proof of
  \cref{lem:bundle}, and slope $u_{0}'=p/2\pi\le\Delta$, so \cref{lem:dipcost} applies to each dip.
  By \cref{lem:bundle,lem:dipcost} the $2N-1$ complete dips contribute
  $(2N-1)\bigl(\dipconst\Delta^{5/4}-3\dipcorr\Delta^{7/4}\bigr)+O\bigl(N\Delta^{9/4}\bigr)$, and the
  half-dip, the affine parts and the depth of the bundle contribute
  $O\bigl(\Delta^{5/4}\bigr)+O\bigl(\Delta^{5}N^{2}\bigr)=O(1)$, and
  $O\bigl(N\Delta^{9/4}\bigr)=O(1)$, both by \cref{eq:pitchchoice}.
  By \cref{lem:fold} the fold contributes
  $\dgconst^{2}/d+O(1)=\dgconst^{2}/\Delta+O\bigl(p\Delta^{-2}\bigr)+O(1)=\dgconst^{2}/\Delta+O(1)$, since
  $p\Delta^{-2}=\Delta^{3}$; the smoothing arcs contribute $O(p)$, and the connector $O(1)$.
  Finally, by the length formula of \cref{lem:bundle}, adding one turn to each spiral changes the length by
  $4\pi$ up to per-turn terms $O(pN)+O(\Delta^{5/4})+O(\Delta^{7/4})$, each at most $C\Delta^{5/4}$ by
  \cref{eq:pitchchoice}, which is the stated increment property of $F$.
\end{proof}

\begin{theorem}
  \label{thm:upper}
  There are $C$ and $L_{0}$ such that, for every $L\ge L_{0}$,
  \[
    \minen_{L}\ \le\ L+\cstar\,L^{4/9}+\ctwo\,L^{2/9}+C,
    \qquad
    \ctwo:=-\frac{3\dipcorr\,\depthconst^{7/4}}{2\pi}=-1.4844067\ldots,
  \]
  where $\depthconst:=\bigl(8\pi\dgconst^{2}/5\dipconst\bigr)^{4/9}=1.9788993\ldots$ and
  $\Delta:=\depthconst L^{-4/9}$.
\end{theorem}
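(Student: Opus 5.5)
The theorem should follow from \cref{prop:upperconstruction} by choosing $\Delta$ and $N$ in terms of $L$. The length then has to be matched exactly, and the resulting bound expanded.

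\emph{Choice of parameters.}
Set $\Delta:=\depthconst L^{-4/9}$ as in the statement. Let $N$ be the largest integer with $F(N,\Delta)\le L-C_{1}$, where $C_{1}$ is a fixed constant chosen below.
By the increment property $\abs{F(N+1,\Delta)-F(N,\Delta)-4\pi}\le C\Delta^{5/4}$, the deficit $\ell:=L-F(N,\Delta)$ lies in $[C_{1},C_{1}+4\pi+1]$ for $L\ge L_{0}$.
In this regime $N\asymp L$ and $\Delta\asymp L^{-4/9}$, so $N\Delta^{9/4}\asymp L^{0}\cdot L^{-0}\cdot$ small, more precisely $N\Delta^{9/4}\lesssim L\cdot L^{-1}=O(1)$ with a small constant once $\depthconst$ is fixed. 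If the constant is not small enough, I would replace the pitch by $p=\Delta^{6}$, which leaves every estimate of \cref{prop:upperconstruction} intact. Either way the standing assumption \cref{eq:pitchchoice} holds.

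\emph{Exact length.}
I would absorb the deficit $\ell$ in the connector. Inside $\{\abs z\le\tfrac12\}$, replace the $S$-arc of \cref{lem:connector} by a $C^{1,1}$ embedded arc with the same end data, lengthened by exactly $\ell$, for instance by inserting a smooth embedded meander whose size varies continuously with a parameter. Since $\ell$ ranges over a compact interval, the curvature and energy stay bounded by an absolute constant, and the arc stays disjoint from the bundle and from the fold. The constant $C_{1}$ is chosen large enough that this lengthened connector has length at least that of the original. The resulting competitor has length exactly $L$ and, by \cref{prop:upperconstruction},
\[
  \excess\le\frac{\dgconst^{2}}{\Delta}+(2N-1)\bigl(\dipconst\Delta^{5/4}-3\dipcorr\Delta^{7/4}\bigr)+C .
\]

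\emph{Expressing $N$ through $L$.}
By \cref{lem:bundle}, together with the $O(1)$ lengths of the fold, connector and smoothing arcs,
\[
  L=4\pi N-O\bigl(N\Delta^{5/4}\bigr)+O(1),\qquad\text{so}\qquad 2N-1=\frac{L}{2\pi}\bigl(1+O(\Delta^{5/4})\bigr)+O(1).
\]
Substituting, the error committed is $O(L\Delta^{5/2})+O(\Delta^{5/4})=o(1)$. Hence
\[
  \excess\le\frac{\dgconst^{2}}{\Delta}+\frac{\dipconst L}{2\pi}\Delta^{5/4}-\frac{3\dipcorr L}{2\pi}\Delta^{7/4}+C .
\]

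\emph{Evaluating at the chosen depth.}
With $\beta=\tfrac54$, the first two terms are minimised by \cref{eq:generaltradeoff} at $\Delta^{9/4}=8\pi\dgconst^{2}/(5\dipconst L)$, which is exactly $\Delta=\depthconst L^{-4/9}$. At the optimum the second term equals $\tfrac45$ of the first, so the sum is
\[
  \tfrac95\,\dgconst^{2}\depthconst^{-1}L^{4/9}=\tfrac95\bigl(\dgconst^{2}\bigr)^{5/9}\bigl(5\dipconst/8\pi\bigr)^{4/9}L^{4/9}=\cstar L^{4/9}.
\]
The third term is $-\tfrac{3\dipcorr}{2\pi}\depthconst^{7/4}L^{1-7/9}=\ctwo L^{2/9}$. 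Since $\minen_{L}\le\energy=L+\excess$, this gives the claim.

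\emph{Main obstacle.}
The only genuinely delicate step is the exact length matching. It needs a connector family of continuously adjustable length with uniformly bounded energy and without collisions. I expect the meander construction in the free disk of radius $\tfrac12$ to handle it routinely. All other steps are bookkeeping of error terms against \cref{eq:pitchchoice}.
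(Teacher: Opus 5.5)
Your argument is correct, and it departs from the paper only in how the length is matched exactly. The paper keeps the connector of \cref{lem:connector} fixed and takes $N$ maximal with $F(N,\Delta)\le L$. It then absorbs the deficit $L-F(N,\Delta)\in[0,4\pi+C\Delta^{5/4})$ by continuing both arms of the bundle through a common angle $t\in[0,2\pi)$, the length being continuous and increasing in $t$. The added length sits at depth $O(Np)=O(\Delta^{11/4})$, so the adjustment costs only $o(1)$, plus at most two further dips of cost $O(\Delta^{5/4})$.

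You instead leave the bundle and the fold exactly as in \cref{prop:upperconstruction}, and put the bounded deficit into a continuously lengthened meander of the connector inside $\{\abs{z}\le\tfrac12\}$. The gain is that \cref{prop:upperconstruction} is used verbatim: nothing near $\partial\disk$ moves. You therefore need not recheck embeddedness of the extended spirals, the position of the rotated inner ends relative to the dip windows, or the placement of the connector at the new angles. The cost is an excess of order one that grows with the inserted length, since keeping length inside a disk of radius $\tfrac12$ forces curvature of order $2$; it therefore oscillates with $L$. This is harmless for \cref{thm:upper}, whose remainder is $+C$. It would, however, obstruct the refinement \cref{eq:expansion}, where $c_{0}$ is meant to be a genuine constant. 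This is why the paper explicitly avoids order-one adjustments such as dilation and adds length near the boundary, where it is almost free.

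One step needs correcting. You leave the constant in $N\Delta^{9/4}$ uncomputed and offer the fallback $p=\Delta^{6}$, but that fallback would break \cref{prop:upperconstruction}. The junction estimate \cref{eq:junction} needs the pitch to dominate the $O(\Delta^{11/2})$ penetration depth of the fold (\cref{rem:whydelta}), and $\Delta^{6}\ll\Delta^{11/2}$. No fallback is needed. From $N=\tfrac{L}{4\pi}(1+o(1))$ and $\Delta^{9/4}=\depthconst^{9/4}L^{-1}$ one gets $N\Delta^{9/4}\to\depthconst^{9/4}/(4\pi)=2\dgconst^{2}/(5\dipconst)\approx0.37$. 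Hence \cref{eq:pitchchoice} holds for $L\ge L_{0}$, which is exactly the paper's check. Had the limit exceeded $1$, the right remedy would have been to note that \cref{eq:pitchchoice} enters only through bounds such as $Np\le\Delta$ and $\Delta^{5}N^{2}\lesssim1$. These survive under $N\Delta^{9/4}\le K$ for a fixed $K$, so there is no need to shrink $p$.
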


\begin{proof}
  Fix $\Delta=\depthconst L^{-4/9}$ and let $N$ be the largest integer with $F(N,\Delta)\le L$; then
  $N=\tfrac{L}{4\pi}\bigl(1+o(1)\bigr)$, so
  $N\Delta^{9/4}=\tfrac{\depthconst^{9/4}}{4\pi}\bigl(1+o(1)\bigr)\le1$ for $L\ge L_{0}$, and
  \cref{eq:pitchchoice} is in force.
  Rather than dilating, which would cost an amount of order one, adjust the length exactly by extending both arms through a
  common angle $t$: write $\ell(N,t)$, $t\in[0,2\pi)$, for the length of the configuration in which both arms
  are continued beyond their $N$ complete turns through the further angle $t$, so that $\ell$ is continuous
  and increasing in $t$ with $\ell(N,0)=F(N,\Delta)$ and $\ell(N,2\pi^{-})=F(N+1,\Delta)$.
  By the choice of $N$ one has $\ell(N,0)\le L<\ell(N,2\pi^{-})$, so $\ell(N,t)=L$ for some $t$.
  The extension preserves the antipodality of the inner ends, leaves the fold untouched, and adds length at
  depth $O(Np)=O(\Delta^{11/4})$, so that it costs $o(1)$ and creates at most two further dips, one per arm,
  of cost $O(\Delta^{5/4})$ each.
  With $2N-1=\tfrac{L}{2\pi}+O(L\Delta^{5/4})$ and $\dipconst\Delta^{5/4}=O(L^{-5/9})$, the count perturbs the
  total by $O(L^{-1/9})$, and \cref{prop:upperconstruction} gives
  \[
    \excess\ \le\ \frac{\dgconst^{2}}{\Delta}+\frac{L}{2\pi}\Bigl(\dipconst\Delta^{5/4}-3\dipcorr\Delta^{7/4}\Bigr)+C .
  \]
  The first two terms are minimised at $\Delta=\depthconst L^{-4/9}$, with value $\cstar L^{4/9}$, and the third is
  $\ctwo L^{2/9}$ at that value of $\Delta$.
\end{proof}

\begin{remark}
  \label{rem:matching}
  \Cref{thm:lowersharp,thm:upper} together are \cref{thm:sharpmain}.
  The upper bound is the sharper of the two: it already exhibits the term $\ctwo L^{2/9}$ of
  \cref{eq:expansion}, with $\ctwo<0$, whereas the lower bound is accurate only to $O(L^{1/3})$ for the reason
  given in \cref{rem:whyLthird}.
\end{remark}

\begin{remark}
  \label{rem:whydelta}
  The choice $\Delta\asymp L^{-4/9}$ is the one dictated by \cref{eq:generaltradeoff}: it equalises the cost
  $\Delta^{-1}$ of the single fold with the total cost $N\Delta^{5/4}\asymp L\Delta^{5/4}$ of the dips.
  The pitch $p$ plays no role in that balance; it is squeezed from two sides, since the bundle-depth excess
  requires $N^{2}p\to0$ while the junction estimate of \cref{prop:upperconstruction} requires
  $p\gg\Delta^{11/2}$, and any $p$ in that range does, \cref{eq:pitchchoice} being one convenient choice.
  This is possible precisely because the two counter-rotating halves of the bundle are joined through the empty
  interior rather than across the pitch: a fold placed between two adjacent passages would cost $p^{-1}$, and no
  choice of $p$ makes both $p^{-1}$ and the excess $\asymp N^{2}p$ caused by the bundle's depth small.
\end{remark}

%% file: sec5-lower.tex
\section{The lower bound}
\label{sec:lower}

This section proves the lower half of \cref{thm:sharpmain}, with the sharp constant: this is
\cref{thm:lowersharp} below.
The proof is carried out not on all of $\emb_{L}$ but on the subclass
\begin{equation}
  \label{eq:restrictedclass}
  \emb_{L}^{\bootstrap}\;:=\;\bigl\{\gamma\in\emb_{L}:\ \excess(\gamma)\le\bootstrap\,L^{4/9}\bigr\},
\end{equation}
where $\bootstrap:=\cstar+2$.
By \cref{thm:upper} the competitor constructed there has excess at most $(\cstar+1)L^{4/9}$ for
$L\ge L_{0}$, since $\ctwo<0$; so $\emb_{L}^{\bootstrap}$ is nonempty, and the remaining unit gives the class
the slack needed for the approximation in \cref{rem:avoidorigin} below, whose competitors are admitted at the
cost of an arbitrarily small increase of the energy.
This is a genuine reduction of the competitor class, and not a case distinction: it is legitimate precisely
because the upper bound is already available.

\begin{remark}
  \label{rem:reduction}
  The class $\emb_{L}^{\bootstrap}$ is nonempty for $L\ge L_{0}$, since the curve produced by \cref{thm:upper}
  belongs to it, and every curve outside it has excess exceeding $\bootstrap L^{4/9}$, hence energy larger than
  that of any member; so
  \[
    \minen_{L}\;=\;\inf_{\gamma\in\emb_{L}^{\bootstrap}}\energy(\gamma),
  \]
  and it suffices to bound $\excess$ from below on $\emb_{L}^{\bootstrap}$.
\end{remark}

Accordingly we assume throughout that
\begin{equation}
  \label{eq:bootstrap}
  \excess=\excess(\gamma)\le \bootstrap\,L^{4/9}.
\end{equation}
All constants $c,C$ below, and the threshold $L_{0}$, are allowed to depend on $\bootstrap$; no estimate uses more
about $\excess$ than \cref{eq:bootstrap}, so $\bootstrap$ enters only through such constants.

\begin{remark}
  \label{rem:avoidorigin}
  We may, and do, also assume throughout that
  \begin{equation}
    \label{eq:avoidorigin}
    0\notin\gamma([0,L]) ,
  \end{equation}
  by \cref{lem:density}.
  Indeed a curve with $\excess>(\bootstrap-1)L^{4/9}=(\cstar+1)L^{4/9}$ satisfies every lower bound proved in
  this section outright; and for a
  curve with $\excess\le(\bootstrap-1)L^{4/9}$ the approximants of \cref{lem:density} with $\eta\le1$ have
  excess at most $\bootstrap L^{4/9}$, so they lie in $\emb_{L}^{\bootstrap}$, avoid the origin, and satisfy
  $\excess(\gamma)\ge\excess(\gamma')-\eta$; letting $\eta\downarrow0$ transfers to $\gamma$ any lower bound
  proved under \cref{eq:avoidorigin}.
  Under \cref{eq:avoidorigin} the depth $h$ is of class $C^{1}\cap W^{2,2}$ on all of
  $\mathbb{R}/L\mathbb{Z}$; but no estimate below depends on the value of $\dist(0,\gamma([0,L]))$, which the
  approximation does not control --- quantitative bounds involving $h''$ or $\varphi'$ are always localised to
  depth sets such as $\{h\le\tfrac1{10}\}$.
\end{remark}

Finally, by \cref{eq:avoidorigin} the polar angle $\varphi$ of \cref{eq:framerelations} admits a global
continuous lift, fixed once and for all; \emph{all} angular bookkeeping below --- the envelope radius
$R(\theta)$, folds, blocking and the passage count --- refers to the origin and to this lift.

\Cref{sec:lift,sec:budgets} assemble the tools: an angular lift, and a master identity expressing the excess of
any arc, up to a boundary term, as the integral of pointwise nonnegative densities, so that it may be localised.
\Cref{sec:lower-scarcity} then shows that sign alternations of the sweep density at a definite amplitude are
few, so that the curve splits into $O(\excess)$ arcs of constant orientation.
\Cref{sec:lower-envelope} selects a fold on the outer envelope, where by construction nothing lies above it.
\Cref{sec:lower-head,sec:lower-dip} prepare the pricing of that fold and of the passages forced beneath it,
and \cref{sec:lower-sharp} carries both out with the sharp constants and balances them.
\Cref{sec:upper} is independent of this section.

The two notions on which everything rests, the angular lift and the fold, are made precise in
\cref{def:lift} below.
\subsection{The angular lift and folds}
\label{sec:lift}

Let us start by making precise two notions recurring throughout the rest of the paper.

\begin{definition}[Angular lift, fold]
  \label{def:lift}
  Let $\gamma\in\emb_{L}$ satisfy \cref{eq:avoidorigin} and let $r=\abs\gamma$.
  \begin{enumerate}
    \item An \emph{angular lift} of $\gamma$ is a continuous $\varphi:[0,L]\to\mathbb{R}$ with
      \[
        \gamma(s)=r(s)\,e^{i\varphi(s)}\qquad\text{for every }s\in[0,L],
      \]
      that is, a continuous choice of the polar angle along the curve.
    \item A parameter $s$ is a \emph{fold} if $\varphi'(s)=0$.
  \end{enumerate}
\end{definition}

An angular lift exists and is unique up to an additive constant in $2\pi\mathbb{Z}$, and is of class
$W^{2,2}$; this is \cref{lem:lift} below, which also records the two identities the lift satisfies.
We fix one such lift once and for all and call it \emph{the} polar angle.
Being a fold does not depend on that choice, two lifts differing by a constant, and by
\cref{eq:framerelations} the condition $\varphi'(s)=0$ holds if and only if $\cos\psi(s)=0$, equivalently
$\abs{h'(s)}=1$, equivalently $\sweepdens(s)=0$ for the sweep density of \cref{eq:sweepdef}: at a fold the motion
is purely radial.
The name notwithstanding, $\varphi'$ need not change sign at a fold, and the sense of travel need not reverse;
\cref{sec:heuristics} uses ``fold'' in the narrower informal sense of a turning point.
Folds are not the same notion as the $\tfrac12$-alternations counted in \cref{prop:scarcity}, and the two are
kept apart.

Two points about the lift are worth stressing.
It is taken on the interval $[0,L]$ and not on $\mathbb{R}/L\mathbb{Z}$: it is periodic only if
$\ind_{\gamma}(0)=0$, and in general $\varphi(L)-\varphi(0)=2\pi\ind_{\gamma}(0)$.
We extend it to the whole of $\mathbb{R}$, the universal cover of the parameter circle, by
\[
  \varphi(s+L):=\varphi(s)+2\pi\ind_{\gamma}(0),
\]
which is again continuous and again lifts $\gamma/r$, the curve being $L$-periodic; parameters below range
over $\mathbb{R}$ whenever a construction is cyclic --- so for the endpoints $s_{\pm}$ of
\cref{sec:lower-head}, for the intervals $P_{j}$ of \cref{lem:separation}, and for the arcs of
$\gamma\setminus F_{\epsilon}$ in \cref{lem:blocking} --- and the corresponding subsets of the curve are their
images in $\mathbb{R}/L\mathbb{Z}$.
And it is the lift of $\gamma$ itself, not of the tangent: the tangent angle is a different function, related
to $\varphi$ through \cref{eq:frame}.

\begin{lemma}
  \label{lem:lift}
  Let $\gamma\in W^{2,2}(\mathbb{R}/L\mathbb{Z};\mathbb{R}^{2})$ be closed and parametrised by arclength, with
  $0\notin\gamma([0,L])$, and write $r:=\abs\gamma$.
  Then $\gamma$ has an angular lift, any two angular lifts differ by a constant in $2\pi\mathbb{Z}$, and every
  one of them belongs to $W^{2,2}(0,L)\subset C^{1}([0,L])$ and satisfies
  \begin{equation}
    \label{eq:liftderivative}
    \varphi'=\frac{\det(\gamma,\gamma')}{r^{2}},
    \qquad
    (r')^{2}+r^{2}(\varphi')^{2}\equiv 1 .
  \end{equation}
  Moreover $\int_{0}^{L}\varphi'\,ds=2\pi\ind_{\gamma}(0)$, and at every $s$ with $\varphi'(s)=0$ the tangent
  is parallel to $\gamma(s)$.
\end{lemma}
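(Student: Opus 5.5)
The plan is to build the lift from the map $e:=\gamma/r:\mathbb{R}/L\mathbb{Z}\to S^{1}$ and then read off its regularity and the two identities. Since $0\notin\gamma([0,L])$ and the trace is compact, $r\ge r_{0}>0$. The map $x\mapsto x/\abs x$ is smooth on $\{\abs x\ge r_{0}\}$, so $e$ is continuous, and in fact of class $W^{2,2}$ because composition with a smooth map preserves $W^{2,2}$ in one dimension. Existence of a continuous $\varphi:[0,L]\to\mathbb{R}$ with $e=e^{i\varphi}$ is the path-lifting property of the covering $t\mapsto e^{it}$. Uniqueness up to $2\pi\mathbb{Z}$ follows because two lifts differ by a continuous $2\pi\mathbb{Z}$-valued function on the connected interval $[0,L]$, which must be constant.

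For regularity I will write $\varphi$ locally as a smooth branch of the argument composed with $\gamma$. On each short subinterval the image of $e$ lies in an open half-circle, where a smooth branch $\arg$ exists. There $\varphi=\arg\circ\gamma+2\pi k$ for a fixed integer $k$, so $\varphi\in W^{2,2}$ on each piece, and finitely many pieces cover $[0,L]$. The embedding $W^{2,2}(0,L)\subset C^{1}([0,L])$ is standard in one dimension. Differentiating $\gamma=re^{i\varphi}$ gives
\[
\gamma'=r'e^{i\varphi}+ir\varphi'e^{i\varphi}.
\]
Hence $\det(\gamma,\gamma')=\operatorname{Im}(\bar\gamma\gamma')=r^{2}\varphi'$, which is the first identity in \cref{eq:liftderivative}. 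Since $e^{i\varphi}$ and $ie^{i\varphi}$ are orthonormal, $\abs{\gamma'}^{2}=(r')^{2}+r^{2}(\varphi')^{2}$, and this equals $1$ by the arclength parametrisation.

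For the integral, $\int_{0}^{L}\varphi'\,ds=\varphi(L)-\varphi(0)$. Because $\gamma(L)=\gamma(0)$, this difference lies in $2\pi\mathbb{Z}$, and it equals $2\pi$ times the degree of $e$. That degree is the winding number $\ind_{\gamma}(0)$, by its very definition through the argument; equivalently, through the formula $\frac1{2\pi}\int\det(\gamma,\gamma')/r^{2}\,ds$. Finally, at a parameter with $\varphi'(s)=0$ the second identity gives $r'(s)=\pm1$, and then $\gamma'(s)=r'(s)e^{i\varphi(s)}=\pm\gamma(s)/r(s)$, so the tangent is parallel to $\gamma(s)$.

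There is no real obstacle here. The only point needing care is the $W^{2,2}$ regularity of $\varphi$: the chain rule for $W^{2,2}$ under smooth maps must be applied on a set where the denominator $r$ is bounded below, which \cref{eq:avoidorigin} guarantees. The local branches must also be patched into a single global function, which the uniqueness step already ensures.
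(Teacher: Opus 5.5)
Your proposal is correct and follows the paper's proof. Both arguments lift $\gamma/r$ through the covering $t\mapsto e^{it}$ with uniqueness from connectedness, differentiate $\gamma=re^{i\varphi}$ to get the two identities, identify $\varphi(L)-\varphi(0)$ with $2\pi\ind_{\gamma}(0)$ through the degree of $\gamma/r$, and read off $\gamma'=\pm\gamma/r$ at a zero of $\varphi'$. The one difference is that you get the $W^{2,2}$ regularity of $\varphi$ from local smooth branches of the argument, which justifies the differentiation of $e^{i\varphi}$ that the paper carries out before deducing regularity from the resulting formula.
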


\begin{proof}
  The map $\sigma:=\gamma/r$ is of class $C^{1}$ into $S^{1}$, because $\gamma\in C^{1}$ and
  $r\ge\dist(0,\gamma([0,L]))>0$.
  The exponential $t\mapsto e^{it}$ is a covering $\mathbb{R}\to S^{1}$ and $[0,L]$ is simply connected, so
  $\sigma$ lifts: there is a continuous $\varphi$ with $\sigma=e^{i\varphi}$, unique once its value at $0$ is
  chosen, hence unique up to $2\pi\mathbb{Z}$ \cite[Prop.~1.33 and 1.34]{Hatcher2002}.
  Only the continuity of $\sigma$ is used here; the lifting of maps of low regularity is a delicate matter, for
  which we refer to \cite{BourgainBrezisMironescu2000}, but no such theory is needed, the regularity of $\varphi$
  following from the formula below.
  Differentiating $e^{i\varphi}=\gamma/r$ and pairing with $i\sigma$ gives the first identity in
  \cref{eq:liftderivative}, the contribution of $r'$ vanishing because $\langle a,ia\rangle=0$.
  The right-hand side is continuous and lies in $W^{1,2}$, whence $\varphi\in W^{2,2}$.
  Writing $\gamma=re^{i\varphi}$ and differentiating gives $\gamma'=(r'+ir\varphi')e^{i\varphi}$, and taking
  moduli yields the second identity in \cref{eq:liftderivative}; in particular $\varphi'(s)=0$ forces
  $\abs{r'(s)}=1$ and $\gamma'(s)=\pm\gamma(s)/r(s)$.
  Finally $\int_{0}^{L}\varphi'$ is $2\pi$ times the degree of $\sigma$, which is $\ind_{\gamma}(0)$.
\end{proof}

Since $r=1-h$, the second identity of \cref{eq:liftderivative} is \cref{eq:framerelations}; and the last
assertion is the precise sense in which a fold is a point of purely radial motion, used repeatedly in
\cref{sec:lower-envelope,sec:lower-head}.

\subsection{A master identity, and the energy budgets}
\label{sec:budgets}

All the estimates below issue from one identity, namely that
$\frac{d}{ds}\langle\gamma,\tau\rangle=1+\kappa\langle\gamma,\nu\rangle$ integrates to zero over a closed curve.
What makes them useful is that they have \emph{pointwise nonnegative} densities.
The density $\kappa^{2}-1$ of the excess is itself sign-indefinite, so the excess does not localise directly;
it does localise, however, up to a single boundary term, and we record that refinement first, in two forms,
since it is what allows an arc of the curve to be priced in isolation.

For an arc $J=[a,b]$ we write
\begin{equation}
  \label{eq:arcexcess}
  \excess_{J}:=\int_{J}\bigl(\kappa^{2}-1\bigr)ds,
  \qquad
  \bterm_{J}:=\bigl[\langle\gamma,\tau\rangle\bigr]_{a}^{b},
  \qquad
  \widehat{\excess}_{J}:=\excess_{J}+2\bterm_{J},
\end{equation}
and $\ell_{J}:=\abs{J}$, $\energy_{J}:=\int_{J}\kappa^{2}$, $\turning_{J}:=\int_{J}\abs{\kappa}$.
Since $\langle\gamma,\tau\rangle=\tfrac12\bigl(\abs{\gamma}^{2}\bigr)'$ is defined everywhere, and equals
$-(1-h)h'$ wherever $r>0$, the boundary term obeys
\begin{equation}
  \label{eq:boundaryterm}
  \abs{\bterm_{J}}\;\le\;\abs{h'(a)}+\abs{h'(b)} ,
\end{equation}
so it is small at any pair of cut points at which the curve runs almost parallel to $\partial\disk$, and it
vanishes at any pair of critical points of the depth.

\begin{proposition}
  \label{prop:masteridentity}
  For every arc $J$ of a closed curve $\gamma\in W^{2,2}$ parametrised by arclength with
  $\gamma\subset\overline{\disk}$,
  \begin{align}
    \widehat{\excess}_{J}
    &\;=\;\int_{J}\bigl(\abs{\kappa}-1\bigr)^{2}ds
    \;+\;2\int_{J}\abs{\kappa}\Bigl(1+\sgn(\kappa)\langle\gamma,\nu\rangle\Bigr)ds
    \label{eq:master}\\
    &\;=\;\int_{J}\bigl(1-r^{2}\bigr)ds\;+\;\int_{J}\langle\gamma,\tau\rangle^{2}ds
    \;+\;\int_{J}\bigl(\kappa-\sweepdens\bigr)^{2}ds ,
    \label{eq:master2}
  \end{align}
  with $\sweepdens=\det(\gamma,\tau)=-\langle\gamma,\nu\rangle$ as in \cref{eq:sweepdef}.
  The two right-hand sides are two splittings of the same density
  \begin{equation}
    \label{eq:masterdensity}
    \mathcal{D}\;:=\;1+\kappa^{2}+2\kappa\langle\gamma,\nu\rangle\;=\;1+\kappa^{2}-2\kappa\sweepdens ,
  \end{equation}
  and all five integrands are nonnegative pointwise; in particular $\widehat{\excess}_{J}\ge0$.
\end{proposition}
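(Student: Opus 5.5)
The plan is to obtain both forms from the single derivative identity $\frac{d}{ds}\langle\gamma,\tau\rangle=1+\kappa\langle\gamma,\nu\rangle$. This holds almost everywhere for $\gamma\in W^{2,2}$, since $\langle\gamma,\tau\rangle$ is absolutely continuous and $\tau'=\kappa\nu$. Integrating over $J$ gives
\[
  \bterm_{J}=\ell_{J}+\int_{J}\kappa\langle\gamma,\nu\rangle\,ds .
\]
Therefore
\[
  \widehat{\excess}_{J}=\int_{J}\bigl(\kappa^{2}-1\bigr)+2\ell_{J}+2\int_{J}\kappa\langle\gamma,\nu\rangle
  =\int_{J}\bigl(1+\kappa^{2}+2\kappa\langle\gamma,\nu\rangle\bigr)ds=\int_{J}\mathcal{D}\,ds .
\]
The second expression for $\mathcal{D}$ in \cref{eq:masterdensity} follows from $\sweepdens=-\langle\gamma,\nu\rangle$, which is immediate from $\nu=i\tau$ and $\det(a,ib)=\langle a,b\rangle$. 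The two splittings are then pointwise algebraic identities for $\mathcal{D}$.

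\emph{First splitting.} Expand
\[
  (\abs\kappa-1)^{2}+2\abs\kappa+2\abs\kappa\sgn(\kappa)\langle\gamma,\nu\rangle=\kappa^{2}-2\abs\kappa+1+2\abs\kappa+2\kappa\langle\gamma,\nu\rangle=\mathcal{D}.
\]
The first summand is a square. The second is nonnegative because $\abs{\langle\gamma,\nu\rangle}\le\abs\gamma\le1$ on $\overline{\disk}$, so $1+\sgn(\kappa)\langle\gamma,\nu\rangle\ge0$. Where $\kappa=0$ the second term vanishes, so the convention for $\sgn(0)$ does not matter.

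\emph{Second splitting.} Expand
\[
  (\kappa-\sweepdens)^{2}=\kappa^{2}-2\kappa\sweepdens+\sweepdens^{2}.
\]
Decompose $\gamma$ in the orthonormal frame $(\tau,\nu)$ to get $r^{2}=\langle\gamma,\tau\rangle^{2}+\langle\gamma,\nu\rangle^{2}=\langle\gamma,\tau\rangle^{2}+\sweepdens^{2}$. Then
\[
  (1-r^{2})+\langle\gamma,\tau\rangle^{2}+(\kappa-\sweepdens)^{2}=1-\sweepdens^{2}+\kappa^{2}-2\kappa\sweepdens+\sweepdens^{2}=\mathcal{D}.
\]
All three integrands are nonnegative, since $r\le1$ and the other two are squares.

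Nonnegativity of $\widehat{\excess}_{J}$ follows at once. The only point needing care is regularity: the identity is used only almost everywhere and integrated, and this is justified because $\langle\gamma,\tau\rangle\in W^{1,2}$. I expect no genuine obstacle here; the substance lies in noticing the Pythagorean relation $r^{2}=\langle\gamma,\tau\rangle^{2}+\sweepdens^{2}$, which makes the second splitting close exactly.
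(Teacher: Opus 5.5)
Your proposal is correct and follows the paper's proof essentially line by line. You integrate $\frac{d}{ds}\langle\gamma,\tau\rangle=1+\kappa\langle\gamma,\nu\rangle$ over $J$ to get $\widehat{\excess}_{J}=\int_{J}\mathcal{D}\,ds$, expand the first splitting directly, close the second via $r^{2}=\langle\gamma,\tau\rangle^{2}+\sweepdens^{2}$, and obtain nonnegativity from $\abs{\langle\gamma,\nu\rangle}\le\abs{\gamma}\le1$; your extra remarks on the absolute continuity of $\langle\gamma,\tau\rangle$ and on the irrelevance of $\sgn(0)$ are accurate minor refinements.
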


\begin{proof}
  Integrating $\frac{d}{ds}\langle\gamma,\tau\rangle=1+\kappa\langle\gamma,\nu\rangle$ over $J$ gives
  \begin{equation}
    \label{eq:closednessarc}
    \bterm_{J}=\ell_{J}+\int_{J}\kappa\langle\gamma,\nu\rangle\,ds ,
  \end{equation}
  so that $\widehat{\excess}_{J}=\energy_{J}-\ell_{J}+2\bterm_{J}=\int_{J}\bigl(\kappa^{2}+1+2\kappa\langle\gamma,\nu\rangle\bigr)ds
  =\int_{J}\mathcal{D}\,ds$.
  Expanding $(\abs{\kappa}-1)^{2}+2\abs{\kappa}(1+\sgn(\kappa)\langle\gamma,\nu\rangle)
  =\kappa^{2}-2\abs{\kappa}+1+2\abs{\kappa}+2\kappa\langle\gamma,\nu\rangle$ gives \cref{eq:master}.
  For \cref{eq:master2}, decompose $\gamma=\langle\gamma,\tau\rangle\tau+\langle\gamma,\nu\rangle\nu$, so that
  $\langle\gamma,\tau\rangle^{2}+\sweepdens^{2}=r^{2}$; then
  $(1-r^{2})+\langle\gamma,\tau\rangle^{2}+(\kappa-\sweepdens)^{2}
  =1-r^{2}+r^{2}+\kappa^{2}-2\kappa\sweepdens=\mathcal{D}$.
  The second integrand of \cref{eq:master} is nonnegative because $\abs{\langle\gamma,\nu\rangle}\le\abs{\gamma}\le1$,
  and the three integrands of \cref{eq:master2} visibly so.
\end{proof}

Taking $J$ to be the whole circle, where $\bterm_{J}=0$, recovers the four estimates of \cref{prop:budgets}
below simultaneously: \cref{eq:master} is the common ancestor of \cref{eq:B3,eq:B4}, and \cref{eq:master2}
that of \cref{eq:B1,eq:B2}, in each case with the boundary term made explicit.
The two splittings read the same density in complementary ways.
In \cref{eq:master2} the three terms measure, separately, how deep the curve is, how tilted it is, and how far
it is from satisfying the equation $\kappa=\sweepdens$ of the unit circle, which holds on $\partial\disk$
traversed in either sense; \cref{eq:master} instead separates the size of the curvature from its sign, and the
curvature-weighted term is what sees an arc of $\partial\disk$ traversed against its own curvature
(\cref{rem:budgetreading}).
The sharp assembly of \cref{sec:lower-sharp} uses \cref{eq:master2} only.

\begin{remark}
  \label{rem:B1known}
  \Cref{eq:master2} for the whole curve is not new: \citet[Lem.~2.4]{Wojtowytsch2018} proves, in the form
  $\energy(\gamma)=2L-\int_{0}^{L}r^{2}\,ds+\int_{0}^{L}\abs{\gamma''+\gamma}^{2}\,ds$, the identity
  $\excess=\int_{0}^{L}(1-r^{2})\,ds+\int_{0}^{L}\abs{\gamma''+\gamma}^{2}\,ds$, of which
  $\abs{\gamma''+\gamma}^{2}=\langle\gamma,\tau\rangle^{2}+(\kappa-\sweepdens)^{2}$ is the frame decomposition;
  the surface version is \citet[Thm.~1]{MullerRoger2014}.
  What is used below is that it is a special case of \cref{eq:master}, and therefore localises with the same
  boundary term.
\end{remark}

\begin{proposition}
  \label{prop:budgets}
  Let $\gamma$ be a closed curve of class $W^{2,2}$ parametrised by arclength, of length $L$, with
  $\gamma([0,L])\subset\overline{\disk}$.
  Then $\turning\ge L$ and
  \begin{align}
    \int_{0}^{L}\bigl(1-r^{2}\bigr)\,ds &\;\le\;\excess, \label{eq:B1}\\
    \int_{0}^{L}\langle\gamma,\tau\rangle^{2}\,ds &\;\le\;\excess, \label{eq:B2}\\
    \int_{0}^{L}\bigl(\abs{\kappa}-1\bigr)^{2}\,ds &\;=\;\excess-2\bigl(\turning-L\bigr)\;\le\;\excess,
      \label{eq:B3}\\
    \int_{0}^{L}\abs{\kappa}\Bigl(1+\sgn(\kappa)\,\langle\gamma,\nu\rangle\Bigr)\,ds
      &\;=\;\turning-L\;\le\;\tfrac12\excess. \label{eq:B4}
  \end{align}
  The integrands in \cref{eq:B1,eq:B2,eq:B4} are nonnegative pointwise, as is that of \cref{eq:B3}.
\end{proposition}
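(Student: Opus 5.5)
The plan is to take $J=[0,L]$, the whole parameter circle, in \cref{prop:masteridentity}. Closedness of $\gamma$ makes $\langle\gamma,\tau\rangle$ periodic, so $\bterm_{J}=0$ and $\widehat{\excess}_{J}=\excess$. The four estimates then come from reading off the two splittings \cref{eq:master,eq:master2} of the same density $\mathcal{D}$, whose pieces are nonnegative pointwise. No additional analysis should be needed: the proposition is a bookkeeping consequence of the master identity.

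First, \cref{eq:B1,eq:B2} come from \cref{eq:master2}. That identity gives
\[
\excess=\int_{0}^{L}(1-r^{2})\,ds+\int_{0}^{L}\langle\gamma,\tau\rangle^{2}\,ds+\int_{0}^{L}(\kappa-\sweepdens)^{2}\,ds,
\]
with all three integrands nonnegative; $1-r^{2}\ge0$ because $\gamma\subset\overline{\disk}$. Dropping any two terms bounds the remaining one by $\excess$.

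Second, \cref{eq:B4} and $\turning\ge L$ come from the closedness relation \cref{eq:closednessarc} with $\bterm_{J}=0$, namely $0=L+\int_{0}^{L}\kappa\langle\gamma,\nu\rangle\,ds$. Writing $\kappa=\abs{\kappa}\sgn(\kappa)$, this reads
\[
\int_{0}^{L}\abs{\kappa}\bigl(1+\sgn(\kappa)\langle\gamma,\nu\rangle\bigr)\,ds=\turning-L .
\]
The integrand is nonnegative since $\abs{\langle\gamma,\nu\rangle}\le r\le1$, so $\turning\ge L$; this is Chakerian's inequality in the form of \cite{Chakerian1964}. Then \cref{eq:master} with $J=[0,L]$ reads $\excess=\int_{0}^{L}(\abs{\kappa}-1)^{2}\,ds+2(\turning-L)$. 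This is the identity in \cref{eq:B3}. Since both summands are nonnegative, it yields both $\int_{0}^{L}(\abs{\kappa}-1)^{2}\,ds\le\excess$ and $\turning-L\le\tfrac12\excess$, completing \cref{eq:B4}. For the first of these, one can equally expand $(\abs\kappa-1)^{2}=\kappa^{2}-2\abs\kappa+1$ and integrate, which gives $\energy-2\turning+L=\excess-2(\turning-L)$ directly.

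The only step needing care is the regularity justifying integration of $\frac{d}{ds}\langle\gamma,\tau\rangle=1+\kappa\langle\gamma,\nu\rangle$ over the full period. For $\gamma\in W^{2,2}$ the function $\langle\gamma,\tau\rangle$ is absolutely continuous with this a.e.\ derivative, because $\tau'=\kappa\nu$ in $L^{2}$ and $\langle\tau,\tau\rangle=1$. Hence the fundamental theorem of calculus applies, and periodicity gives the vanishing boundary term. I expect no genuine obstacle beyond this; the pointwise nonnegativity claims are exactly those already verified in the proof of \cref{prop:masteridentity}.
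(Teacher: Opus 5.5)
Your proposal is correct and follows the paper's proof. Both take the whole period, where the boundary term vanishes, read \cref{eq:B1,eq:B2} off \cref{eq:master2} by discarding two nonnegative terms, and obtain Chakerian's inequality and the identity in \cref{eq:B4} from the closedness relation. Getting the identity in \cref{eq:B3} from \cref{eq:master} rather than by direct expansion (which you also mention, and which is what the paper does) is only a cosmetic variation.
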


\begin{proof}
  Closedness gives
  \begin{equation}
    \label{eq:closedness}
    0=\oint\frac{d}{ds}\langle\gamma,\tau\rangle\,ds=L+\oint\kappa\,\langle\gamma,\nu\rangle\,ds ,
  \end{equation}
  and since $\abs{\langle\gamma,\nu\rangle}\le\abs{\gamma}\le1$ this yields $L\le\oint\abs{\kappa}=\turning$,
  which is Chakerian's inequality \citep{Chakerian1962}.
  For the whole curve $\bterm_{[0,L]}=0$, so $\widehat{\excess}_{[0,L]}=\excess$, and \cref{eq:B1,eq:B2} are
  \cref{eq:master2} with two of its three nonnegative terms discarded.
  For \cref{eq:B3} expand:
  $\int(\abs{\kappa}-1)^{2}=\energy-2\turning+L=\excess-2(\turning-L)$, and $\turning\ge L$ gives the inequality.
  For \cref{eq:B4}, add $\turning$ to \cref{eq:closedness}:
  \[
    \turning-L=\oint\abs{\kappa}\,ds+\oint\kappa\langle\gamma,\nu\rangle\,ds
    =\oint\abs{\kappa}\bigl(1+\sgn(\kappa)\langle\gamma,\nu\rangle\bigr)ds ,
  \]
  whose integrand is nonnegative because $\abs{\langle\gamma,\nu\rangle}\le1$; the bound
  $\turning-L\le\tfrac12\excess$ is \cref{eq:B3}.
\end{proof}

All four estimates hold on an arbitrary arc, with $\excess$ replaced by the charge $\widehat{\excess}_{J}$ of
\cref{eq:arcexcess}: this is \cref{cor:localbudgets}, which is simply \cref{prop:masteridentity} with all but
one term discarded, and which is what makes it possible to price one part of the curve without reference to
the rest, provided only that the cut points are chosen where the curve is almost tangential, so that
\cref{eq:boundaryterm} is small.
Moreover the charge is \emph{superadditive}: the density $\mathcal{D}$ of \cref{eq:masterdensity} is
pointwise nonnegative and $\widehat{\excess}_{J}=\int_{J}\mathcal{D}$, so for pairwise disjoint arcs
$J_{1},\dots,J_{m}$,
\begin{equation}
  \label{eq:additivity}
  \sum_{i=1}^{m}\widehat{\excess}_{J_{i}}\;\le\;\int_{0}^{L}\mathcal{D}\,ds\;=\;\widehat{\excess}_{[0,L]}
  \;=\;\excess .
\end{equation}
This inequality is how the fold price and the dip prices will be added in \cref{sec:lower-sharp}.

\begin{corollary}
  \label{cor:localbudgets}
  For every arc $J$,
  \begin{align}
    \int_{J}\bigl(1-r^{2}\bigr)ds &\;\le\;\widehat{\excess}_{J}, \label{eq:B1loc}\\
    \int_{J}\langle\gamma,\tau\rangle^{2}ds &\;\le\;\widehat{\excess}_{J}, \label{eq:B2loc}\\
    \int_{J}\bigl(\kappa-\sweepdens\bigr)^{2}ds &\;\le\;\widehat{\excess}_{J}, \label{eq:B5loc}\\
    \int_{J}\bigl(\abs{\kappa}-1\bigr)^{2}ds &\;\le\;\widehat{\excess}_{J}, \label{eq:B3loc}\\
    \int_{J}\abs{\kappa}\Bigl(1+\sgn(\kappa)\langle\gamma,\nu\rangle\Bigr)ds
      &\;\le\;\tfrac12\widehat{\excess}_{J}. \label{eq:B4loc}
  \end{align}
  In particular, by \cref{eq:depthbound}, $\int_{J}h\,ds\le\widehat{\excess}_{J}$ for every arc $J$.
\end{corollary}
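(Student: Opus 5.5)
This is a direct consequence of \cref{prop:masteridentity}, applied to the single arc $J$. The identity there gives $\widehat{\excess}_{J}=\int_{J}\mathcal{D}\,ds$. It also gives two splittings of the density $\mathcal{D}$ of \cref{eq:masterdensity} into pointwise nonnegative parts.

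For \cref{eq:B1loc,eq:B2loc,eq:B5loc} I will use the splitting \cref{eq:master2}. It writes $\widehat{\excess}_{J}$ as the sum of the three integrals
\[
\int_{J}(1-r^{2})\,ds,\qquad \int_{J}\langle\gamma,\tau\rangle^{2}\,ds,\qquad \int_{J}(\kappa-\sweepdens)^{2}\,ds .
\]
Each integrand is nonnegative, since $r\le1$ on $\overline{\disk}$ and the other two are squares. Discarding two of the three terms leaves the third bounded by $\widehat{\excess}_{J}$, which gives each of the three inequalities.

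For \cref{eq:B3loc,eq:B4loc} I will use \cref{eq:master} in the same way. The first term there is $\int_{J}(\abs{\kappa}-1)^{2}$. The second is twice the integral in \cref{eq:B4loc}, and its integrand is nonnegative because $\abs{\langle\gamma,\nu\rangle}\le\abs{\gamma}\le1$. Dropping the second term gives \cref{eq:B3loc}. Dropping the first and dividing by $2$ gives \cref{eq:B4loc}, which accounts for the factor $\tfrac12$.

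The final assertion follows from \cref{eq:depthbound}, namely $h\le1-r^{2}$ pointwise on $\overline{\disk}$. Integrating over $J$ and using \cref{eq:B1loc} gives $\int_{J}h\,ds\le\int_{J}(1-r^{2})\,ds\le\widehat{\excess}_{J}$.

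No step is a real obstacle. The one point that needs checking is that \cref{prop:masteridentity} holds for an arbitrary arc of a $W^{2,2}$ curve, with no assumption such as \cref{eq:avoidorigin}. This is fine because $\langle\gamma,\tau\rangle$ is absolutely continuous, with derivative $1+\kappa\langle\gamma,\nu\rangle$ almost everywhere. So the boundary term $\bterm_{J}$ and the identity \cref{eq:closednessarc} are valid on every arc, including arcs through the origin.
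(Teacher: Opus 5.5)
Your proposal is correct and matches the paper's proof: each inequality is \cref{eq:master} or \cref{eq:master2} with the other nonnegative terms discarded, and the final assertion is \cref{eq:B1loc} combined with \cref{eq:depthbound}. Your check that \cref{eq:closednessarc} holds on every arc, including arcs through the origin, is also correct, since $\langle\gamma,\tau\rangle\in W^{1,2}$ needs no hypothesis such as \cref{eq:avoidorigin}.
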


\begin{proof}
  Each is one of \cref{eq:master,eq:master2} with the other terms discarded.
\end{proof}

\begin{corollary}
  \label{cor:hprime}
  $\displaystyle\int_{0}^{L}(h')^{2}\,ds\ \le\ 6\,\excess$.
\end{corollary}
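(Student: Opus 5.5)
The plan is to bound $(h')^{2}$ pointwise by the sum of the first two nonnegative densities of \cref{eq:master2}, and then integrate using \cref{eq:B1,eq:B2}. The stated constant $6$ leaves room to spare: the argument below in fact gives $\int_{0}^{L}(h')^{2}\le\excess$.

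\emph{The pointwise inequality.} At any $s$ with $r(s)>0$, the frame relations \cref{eq:framerelations,eq:frame} give
\[
h'=-\sin\psi \qquad\text{and}\qquad \langle\gamma,\tau\rangle=r\sin\psi .
\]
Equivalently, $\langle\gamma,\tau\rangle=-(1-h)h'$, as recorded before \cref{eq:boundaryterm}. Hence, since $r\le1$ and $\sin^{2}\psi\le1$,
\[
(1-r^{2})+\langle\gamma,\tau\rangle^{2}
\;=\;1-r^{2}+r^{2}\sin^{2}\psi
\;\ge\;(1-r^{2})\sin^{2}\psi+r^{2}\sin^{2}\psi
\;=\;\sin^{2}\psi=(h')^{2}.
\]
This yields $(h')^{2}\le(1-r^{2})+\langle\gamma,\tau\rangle^{2}$ wherever $r>0$ and $h'$ exists.

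\emph{Exceptional points.} Where $r=0$ the inequality still holds. There $1-r^{2}=1$, while $\abs{h'}\le1$ almost everywhere because $h$ is $1$-Lipschitz. Moreover, under \cref{eq:avoidorigin} this set is empty, and in general the set $\{r=0\}$ is a null set on which the bound holds anyway. So the inequality holds almost everywhere on $\mathbb{R}/L\mathbb{Z}$.

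\emph{Integration.} Integrating over the whole curve, where $\bterm_{[0,L]}=0$, \cref{eq:master2} gives
\[
\int_{0}^{L}(h')^{2}\,ds\;\le\;\int_{0}^{L}(1-r^{2})\,ds+\int_{0}^{L}\langle\gamma,\tau\rangle^{2}\,ds\;\le\;\excess\;\le\;6\,\excess .
\]
Here it is important to use the combined identity \cref{eq:master2} rather than \cref{eq:B1} and \cref{eq:B2} separately, since the latter would only give $2\excess$.

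\emph{Main obstacle.} The only delicate point is the region near the origin, where $h$ need not be differentiable and the frame is undefined. This is disposed of by the $1$-Lipschitz bound together with $1-r^{2}\approx1$ there. A cruder alternative would split into $\{r^{2}\ge\tfrac15\}$ and its complement, giving the constant $6.25$; the combined pointwise inequality above avoids that loss entirely.
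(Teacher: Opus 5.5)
Your proof is correct, and it takes a different route from the paper's.

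\textbf{The paper's route.} The paper splits the parameter circle by depth.
\begin{itemize}
\item On $\{h\le\tfrac12\}$ it uses $(h')^{2}\le4\langle\gamma,\tau\rangle^{2}$, which follows from $\langle\gamma,\tau\rangle=-(1-h)h'$ and $1-h\ge\tfrac12$. With \cref{eq:B2} this contributes $4\excess$.
\item On $\{h>\tfrac12\}$ it uses the Lipschitz bound $\abs{h'}\le1$ together with $\abs{\{h>\tfrac12\}}\le2\int h\le2\excess$, from \cref{eq:B1,eq:depthbound}. This contributes $2\excess$.
\end{itemize}

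\textbf{Your route.} You prove the single pointwise inequality $(h')^{2}\le(1-r^{2})+\langle\gamma,\tau\rangle^{2}$. Since $\langle\gamma,\tau\rangle^{2}=r^{2}(h')^{2}$, this is just $(1-r^{2})(h')^{2}\le1-r^{2}$, i.e.\ $\abs{h'}\le1$. You then integrate the sum of two of the three nonnegative densities of \cref{eq:master2}. This needs no case distinction and brings the constant down from $6$ to $1$.

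Your treatment of the origin is fine: the set $\{r=0\}$ contains at most one parameter by injectivity, and is empty under \cref{eq:avoidorigin}. The phrase ``the inequality still holds'' there is loose, since $h'$ does not exist at such a point. It is the null-set observation that actually does the work.

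\textbf{Comparison.} The paper's split buys nothing essential. The constant is only ever absorbed into $C\max(\excess,1)$ in \cref{lem:env,lem:separation}, so your sharper bound could replace it verbatim. Your argument also localises just as readily: it gives $\int_{J}(h')^{2}\le\widehat{\excess}_{J}$ on every arc $J$, by \cref{prop:masteridentity}.
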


\begin{proof}
  On $\{h\le\tfrac12\}$ one has $\langle\gamma,\tau\rangle=-(1-h)h'$ and $1-h\ge\tfrac12$, so
  $(h')^{2}\le4\langle\gamma,\tau\rangle^{2}$ there, and the integral over this set is at most $4\excess$
  by \cref{eq:B2}.
  On $\{h>\tfrac12\}$ the Lipschitz bound gives $(h')^{2}\le1$, while
  $\abs{\{h>\tfrac12\}}\le2\int h\le2\excess$ by \cref{eq:B1,eq:depthbound}.
\end{proof}

\begin{remark}
  \label{rem:tangentialcut}
  It is worth recording when the boundary term is free.
  An arc cut at two critical points of the depth, or one whose two endpoints lie on $\partial\disk$
  tangentially, has $\bterm_{J}=0$, and therefore $\excess_{J}=\widehat{\excess}_{J}\ge0$: for such arcs the
  inequality $\energy\ge L$ of \cref{eq:trivialbound} persists, with equality only for arcs of
  $\partial\disk$.
  The cuts made below are of neither kind --- they are made where $\abs{h'}=\sin\epsilon$, and
  \cref{eq:boundaryterm} is used instead --- but the two cases delimit how small the boundary term can be
  made.
\end{remark}

\begin{remark}
  \label{rem:budgetreading}
  The four estimates say complementary things.
  \Cref{eq:B1,eq:B2} state that a curve of small excess is \emph{positioned} like a multiply covered unit circle:
  outside a set of length proportional to $\excess$ it is close to $\partial\disk$ and almost tangential.
  \Cref{eq:B3} states that it is also \emph{curved} like one, $\abs{\kappa}$ being close to $1$ in $L^{2}$.
  \Cref{eq:B4} is the only one weighted by curvature.
  By \cref{eq:sweepdef} its density reads
  \[
    \abs{\kappa}\Bigl(1-\sgn(\kappa)\,r\cos\psi\Bigr),
  \]
  which vanishes precisely where $\kappa=0$ or where simultaneously $r=1$ and $\cos\psi=\sgn(\kappa)$, that is
  on straight pieces and on the unit circle traversed in the sense of its own curvature; away from straight
  pieces it charges simultaneously, weighted by $\abs{\kappa}$, the three ways of deviating from the circle:
  curving against the sense of travel ($\sgn(\kappa)\cos\psi<0$), being deep ($r<1$), and being tilted
  ($\abs{\cos\psi}<1$).
  It is the sign information that \cref{eq:B1,eq:B2,eq:B3} lack: an arc of unit curvature hugging
  $\partial\disk$ but curving away from the origin is shallow, tangential, and has $\abs{\kappa}=1$, so the three
  do not see it at all, whereas the density of \cref{eq:B4} equals $2$ there.
  In the second splitting \cref{eq:master2} the same arc is charged by $(\kappa-\sweepdens)^{2}=4$: the term
  $(\kappa-\sweepdens)^{2}$ carries the sign of the curvature together with its size, which is why
  \cref{sec:lower-dip,sec:lower-sharp} can work with \cref{eq:master2} alone and never separate the two.
  \Cref{eq:B4} itself is not used in the proofs below; it is recorded because it is the half of \cref{eq:master}
  that makes the sign visible.
\end{remark}

\subsection{Alternations are few}
\label{sec:lower-scarcity}

The natural variable here is the sweep density $\sweepdens=\det(\gamma,\tau)=r\cos\psi$ of
\cref{eq:sweepdef} rather than $\varphi'$: the two have the same sign, but $\varphi'=\sweepdens/r^{2}$ carries
a factor which blows up where the curve passes close to the origin, whereas the sweep density does not.
Accordingly, nothing in this subsection requires the competitor to avoid the origin: the counting arguments
use only $\sweepdens$ itself, which is defined everywhere.

\begin{lemma}
  \label{lem:omega}
  Let $\gamma\in\emb_{L}$.
  Then $\sweepdens\in W^{1,2}(\mathbb{R}/L\mathbb{Z})$ and
  \[
    \abs{\sweepdens}\le r\le1,\qquad
    \sweepdens'=\kappa\,\langle\gamma,\tau\rangle,\qquad
    \abs{\sweepdens'}\le\abs{\kappa},\qquad
    \sweepdens^{2}+\langle\gamma,\tau\rangle^{2}=r^{2}.
  \]
  If moreover $0\notin\gamma([0,L])$, then $\sweepdens=r^{2}\varphi'$.
\end{lemma}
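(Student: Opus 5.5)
The plan is to work directly from the definition $\sweepdens=\det(\gamma,\tau)$, which makes sense for any $\gamma\in\emb_{L}$ without reference to the origin or the polar frame, and to differentiate it using only $\gamma\in W^{2,2}$ with $\abs{\gamma'}\equiv1$.

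Regularity and the derivative come first. Since $\gamma\in W^{2,2}\subset C^{1}$, both $\gamma$ and $\tau=\gamma'$ lie in $W^{1,2}$ and are bounded. The map $(a,b)\mapsto\det(a,b)$ is bilinear, so the product rule for $W^{1,2}$ functions on the circle (bounded times $W^{1,2}$) gives $\sweepdens\in W^{1,2}(\mathbb{R}/L\mathbb{Z})$ and
\[
  \sweepdens'=\det(\tau,\tau)+\det(\gamma,\tau')=\det(\gamma,\kappa\nu)=\kappa\det(\gamma,i\tau).
\]
Using the identity $\det(a,ib)=\langle a,b\rangle$ from \cref{sec:notation}, this becomes $\sweepdens'=\kappa\langle\gamma,\tau\rangle$.

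The pointwise bounds are immediate. Because $(\tau,\nu)$ is an orthonormal frame, we can write $\gamma=\langle\gamma,\tau\rangle\tau+\langle\gamma,\nu\rangle\nu$. Since $\sweepdens=-\langle\gamma,\nu\rangle$, as already noted in \cref{eq:sweepdef}, this gives $\sweepdens^{2}+\langle\gamma,\tau\rangle^{2}=r^{2}$, exactly as in the proof of \cref{prop:masteridentity}. It follows that $\abs{\sweepdens}\le r\le1$ and $\abs{\langle\gamma,\tau\rangle}\le1$, and hence $\abs{\sweepdens'}\le\abs{\kappa}$.

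Finally, suppose $0\notin\gamma([0,L])$. Then \cref{lem:lift} gives $\varphi'=\det(\gamma,\gamma')/r^{2}=\sweepdens/r^{2}$, which is the last assertion.

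No step is a real obstacle. The only point that needs care is justifying the product rule at $W^{2,2}$ regularity, and that is standard because the curve is one-dimensional and all the factors are continuous.
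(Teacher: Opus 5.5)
Your proof is correct and follows the paper's route: the same product-rule differentiation $\sweepdens'=\det(\tau,\tau)+\det(\gamma,\kappa\nu)=\kappa\langle\gamma,\tau\rangle$, and the same appeal to \cref{eq:liftderivative} for $\sweepdens=r^{2}\varphi'$. The only minor difference is in the Pythagorean identity: you expand $\gamma$ in the frame $(\tau,\nu)$, which works at every parameter, whereas the paper expands $\tau$ along $\gamma/r$ and treats the parameters with $\gamma(s)=0$ separately.
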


\begin{proof}
  Decomposing $\tau$ along $\gamma/r$ and its rotation by $\tfrac\pi2$ gives the Pythagorean identity
  and $\abs{\sweepdens}\le r$ wherever $r>0$; at a parameter with $\gamma(s)=0$ both sides vanish.
  Differentiating, $\sweepdens'=\det(\tau,\tau)+\det(\gamma,\kappa\nu)=\kappa\det(\gamma,i\tau)
  =\kappa\langle\gamma,\tau\rangle$, and $\abs{\langle\gamma,\tau\rangle}\le r\le1$.
  The last identity is \cref{eq:liftderivative}.
\end{proof}

Folds are the zeros of $\sweepdens$, and sign changes of $\sweepdens$ are folds at which the sense of travel
does reverse.
Neither number can be bounded: if $\sweepdens(s)=s^{3}\sin(1/s)$ near a point of purely radial motion, then
$\sweepdens$ changes sign infinitely often while, by \cref{lem:omega}, the curvature needed is
$\abs{\kappa}=\abs{\sweepdens'}/\abs{\langle\gamma,\tau\rangle}=O(\abs{s})$, so the configuration is
admissible and cheap.
What is bounded, and what the argument needs, is the number of sign changes at a definite amplitude.

\begin{definition}
  \label{def:alternations}
  For $\varepsilon>0$, an \emph{$\varepsilon$-alternating chain} is a finite cyclic sequence
  $s_{1}<\dots<s_{n}$ in $[0,L)$ with $\abs{\sweepdens(s_{j})}\ge\varepsilon$ for every $j$ and with
  $\sgn\sweepdens(s_{j})$ alternating cyclically.
  We write $N_{\varepsilon}$ for the supremum of $n$ over all such chains.
  Indices are understood modulo $n$ and the intervals $[s_{j},s_{j+1}]$ in $\mathbb{R}/L\mathbb{Z}$, so that
  $j=n$ denotes the wrap-around arc.
  Every alternating chain has even length, so $N_{\varepsilon}\ne1$; that $N_{\varepsilon}$ is finite is the
  content of \cref{prop:scarcity}.
\end{definition}

The following bound is unconditional: it requires neither the standing assumption \cref{eq:avoidorigin} nor
any smallness hypothesis on $\excess$.

\begin{proposition}
  \label{prop:scarcity}
  Let $\gamma\in\emb_{L}$.
  Then $N_{1/2}\le C\,\excess$.
\end{proposition}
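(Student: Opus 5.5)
The plan is to cut the curve at the points of an alternating chain, price each piece from below by an absolute constant using the master identity, and add the prices. Let $s_{1}<\dots<s_{n}$ be a $\tfrac12$-alternating chain, and let $J_{j}:=[s_{j},s_{j+1}]$ (cyclically, with $J_{n}$ the wrap-around arc). These arcs partition $\mathbb{R}/L\mathbb{Z}$ up to endpoints. The boundary terms $\bterm_{J_{j}}=[\langle\gamma,\tau\rangle]_{s_{j}}^{s_{j+1}}$ telescope to zero around the cycle, so by \cref{prop:masteridentity}
\[
  \sum_{j=1}^{n}\widehat{\excess}_{J_{j}}=\int_{0}^{L}\mathcal{D}\,ds=\excess .
\]
It therefore suffices to show $\widehat{\excess}_{J_{j}}\ge c$ for an absolute $c>0$. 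That gives $n\le\excess/c$ and hence $N_{1/2}\le C\excess$.

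Fix $j$. By the alternation, $\sweepdens$ has opposite signs at the two ends of $J_{j}$, with modulus at least $\tfrac12$ at both. Since $\sweepdens\in W^{1,2}$ is continuous by \cref{lem:omega}, there is a subarc $I=[a,b]\subset J_{j}$ with the following properties: $\abs{\sweepdens(a)}=\abs{\sweepdens(b)}=\tfrac12$, the values $\sweepdens(a)$ and $\sweepdens(b)$ have opposite signs, and $\abs{\sweepdens}<\tfrac12$ in the interior. Concretely, take $b$ to be the first parameter after $s_{j}$ at which $\sweepdens$ reaches the opposite threshold, and $a$ the last parameter before $b$ at which it equals the original one.

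On $I$ the density is bounded below pointwise. By \cref{lem:omega}, $\langle\gamma,\tau\rangle^{2}=r^{2}-\sweepdens^{2}\ge r^{2}-\tfrac14$ there. So either $r^{2}\le\tfrac12$, in which case $1-r^{2}\ge\tfrac12$, or else $\langle\gamma,\tau\rangle^{2}\ge\tfrac14$. In both cases the splitting \cref{eq:master2} gives $\mathcal{D}\ge\tfrac14$ on $I$. Since $\mathcal{D}\ge0$ everywhere, this yields $\widehat{\excess}_{J_{j}}\ge\int_{I}\mathcal{D}\ge\ell_{I}/4$.

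The step I expect to carry the argument is excluding a short, cheap subarc $I$. It is handled by the variation of $\sweepdens$ across $I$, which is $1$. By \cref{lem:omega},
\[
  1\le\int_{I}\abs{\kappa}\,\abs{\langle\gamma,\tau\rangle}\,ds\le\ell_{I}^{1/2}\Bigl(\int_{I}\kappa^{2}\Bigr)^{1/2},
\]
using $\abs{\langle\gamma,\tau\rangle}\le1$. Moreover $\kappa^{2}\le2(\kappa-\sweepdens)^{2}+2\sweepdens^{2}\le2(\kappa-\sweepdens)^{2}+\tfrac12$ on $I$, so
\[
  \int_{I}\kappa^{2}\le2\int_{I}\mathcal{D}+\tfrac12\ell_{I}\le4\int_{I}\mathcal{D}
\]
by the previous paragraph. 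Write $X:=\int_{I}\mathcal{D}$. Combining the two displays with $\ell_{I}\le4X$ gives $1\le\ell_{I}\int_{I}\kappa^{2}\le16X^{2}$, so $X\ge\tfrac14$. Hence $\widehat{\excess}_{J_{j}}\ge\tfrac14$, and summing over $j$ gives $N_{1/2}\le4\excess$.

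Nothing in the argument uses \cref{eq:avoidorigin}: only $\sweepdens$ and $\langle\gamma,\tau\rangle$ appear, both defined everywhere. Nor does it use any bound on $\excess$, consistent with the unconditional statement. The only care needed is with the degenerate cyclic case $n=2$, where the two arcs $J_{1},J_{2}$ still partition the circle, so the telescoping still holds.
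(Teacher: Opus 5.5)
Your proof is correct, and it gives the explicit bound $N_{1/2}\le4\excess$. It shares its skeleton with the paper's proof. Your subarc $I$ is exactly the paper's transition interval $I_{j}=[a_{j},b_{j}]$, and both arguments extract $\int_{I}\abs{\kappa}\ge1$ from $\sweepdens'=\kappa\langle\gamma,\tau\rangle$.

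The difference lies in how that crossing is converted into excess. The paper never prices a single transition interval. It splits the intervals by a length threshold $\ell_{*}$:
\begin{itemize}
\item the long intervals are counted through the global budgets \cref{eq:B1,eq:B2}, which bound the total length of the transition intervals, treating $\{r\ge\frac9{10}\}$ and $\{r<\frac9{10}\}$ separately;
\item the short intervals are counted through Cauchy--Schwarz against \cref{eq:B3}, via $\int_{I_{j}}(\abs{\kappa}-1)^{2}\ge1/(4\ell_{j})$.
\end{itemize}
The resulting constant is left unspecified.

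You instead localise with the nonnegative density $\mathcal{D}$ of \cref{eq:masterdensity}. On $I$ this density controls the length through your pointwise bound; in fact $\mathcal{D}=1-\sweepdens^{2}+(\kappa-\sweepdens)^{2}\ge\frac34$ there. It also controls $\int_{I}\kappa^{2}$. So the long and short regimes collapse into the single inequality $1\le\ell_{I}\int_{I}\kappa^{2}\le16\bigl(\int_{I}\mathcal{D}\bigr)^{2}$.

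What your route buys is a local statement: every crossing of the band $\abs{\sweepdens}\le\frac12$ carries charge at least $\frac14$, with no threshold and an explicit constant. The telescoping of boundary terms is not even needed, since the transition intervals are disjoint and \cref{eq:additivity} already adds their charges.

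The paper's route, by contrast, uses only the four global budgets of \cref{prop:budgets}. Its long-interval estimate is then reused verbatim in \cref{cor:arcs} to bound $\abs{Z}$.
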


\begin{proof}
  Assume $N_{1/2}\ge2$ and let $s_{1}<\dots<s_{n}$ be a $\tfrac12$-alternating chain with
  $\sigma_{j}:=\sgn\sweepdens(s_{j})$.
  For each $j$ put
  \begin{align*}
    a_{j}&:=\max\{s\in[s_{j},s_{j+1}]:\sigma_{j}\sweepdens(s)\ge\tfrac12\},\\
    b_{j}&:=\min\{s\in[a_{j},s_{j+1}]:\sigma_{j}\sweepdens(s)\le-\tfrac12\},
    \qquad I_{j}:=[a_{j},b_{j}].
  \end{align*}
  Both sets are nonempty and closed, so $a_{j}$ and $b_{j}$ are well defined.
  Moreover $a_{j}<s_{j+1}$, because $\sigma_{j}\sweepdens(s_{j+1})\le-\tfrac12$, so by continuity
  $\sigma_{j}\sweepdens(a_{j})=\tfrac12$; symmetrically $\sigma_{j}\sweepdens(b_{j})=-\tfrac12$, and
  $\abs{\sweepdens}\le\tfrac12$ on $I_{j}$ by maximality of $a_{j}$ and minimality of $b_{j}$.
  Since $\sweepdens\in W^{1,2}$ is absolutely continuous and $\abs{\sigma_{j}}=1$,
  \[
    \int_{I_{j}}\abs{\sweepdens'}\,ds\ \ge\ \Bigl|\int_{I_{j}}\sweepdens'\,ds\Bigr|
    =\bigl|\sweepdens(b_{j})-\sweepdens(a_{j})\bigr|=1 :
  \]
  the sweep density must cross the whole band $[-\tfrac12,\tfrac12]$ on $I_{j}$.
  Since $I_{j}\subset[s_{j},s_{j+1}]$ the intervals have pairwise disjoint interiors, and by \cref{lem:omega}
  \begin{equation}
    \label{eq:transitionturning}
    \int_{I_{j}}\abs{\kappa}\,ds\ \ge\ \int_{I_{j}}\abs{\sweepdens'}\,ds\ \ge\ 1 .
  \end{equation}
  Write $\ell_{j}:=\abs{I_{j}}$ and split the indices according to whether $\ell_{j}$ exceeds a threshold
  $\ell_{*}\in(0,\tfrac12)$ to be fixed.

  \emph{Long intervals.}
  Let $s\in I_{j}$ with $r(s)\ge\tfrac9{10}$.
  Since $\abs{\sweepdens}\le\tfrac12$ on $I_{j}$, the Pythagorean identity of \cref{lem:omega} gives
  $\abs{\langle\gamma,\tau\rangle}=\bigl(r^{2}-\sweepdens^{2}\bigr)^{1/2}
  \ge\bigl(\tfrac{81}{100}-\tfrac14\bigr)^{1/2}\ge\tfrac12$, whence
  $\langle\gamma,\tau\rangle^{2}\ge\tfrac14$ there.
  As the $I_{j}$ are disjoint, \cref{eq:B2} gives $\sum_{j}\abs{I_{j}\cap\{r\ge\tfrac9{10}\}}\le4\excess$, while
  \cref{eq:B1} gives $\abs{\{r<\tfrac9{10}\}}\le C\excess$.
  Hence $\sum_{j}\ell_{j}\le C\excess$ and the number of long intervals is at most $C\excess/\ell_{*}$.

  \emph{Short intervals.}
  If $\ell_{j}\le\ell_{*}$ then by \cref{eq:transitionturning}
  \[
    \int_{I_{j}}\bigl|\abs{\kappa}-1\bigr|ds\ \ge\ \int_{I_{j}}\abs{\kappa}\,ds-\ell_{j}\ \ge\ 1-\ell_{*}
    \ \ge\ \tfrac12,
  \]
  so Cauchy--Schwarz on $I_{j}$ gives
  $\int_{I_{j}}(\abs{\kappa}-1)^{2}ds\ge1/(4\ell_{j})\ge1/(4\ell_{*})$.
  Summing over the disjoint short intervals and using \cref{eq:B3}, their number is at most
  $C\ell_{*}\excess$.

  Fixing $\ell_{*}$ once and for all, both counts are at most $C\excess$, and the bound is independent of the
  chain.
\end{proof}

\begin{corollary}
  \label{cor:arcs}
  Let $\gamma\in\emb_{L}$ and write $Z:=\{s:\abs{\sweepdens(s)}<\tfrac12\}$.
  \begin{enumerate}
    \item There is a partition of $[0,L)$ into $\max(N_{1/2},1)\le C\max(\excess,1)$ arcs
      $\Pi_{1},\Pi_{2},\dots$ such
      that $\sgn\sweepdens$ is constant on each $\Pi_{k}\setminus Z$; moreover $\abs{Z}\le C\excess$.
    \item If \cref{eq:avoidorigin} holds, then in addition
      \[
        \abs{\varphi'}=\frac{\abs{\sweepdens}}{r^{2}}\ \le\ 2
        \qquad\text{on }Z\cap\bigl\{h\le\tfrac12\bigr\}.
      \]
  \end{enumerate}
\end{corollary}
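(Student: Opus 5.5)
For part (i) I would read the partition off a maximal $\tfrac12$-alternating chain, and for part (ii) I would combine $\sweepdens=r^{2}\varphi'$ from \cref{lem:omega} with the depth bound $h\le\tfrac12$.

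\emph{Part (i).} If $\abs{\sweepdens}<\tfrac12$ everywhere, then $Z=[0,L)$ and the single arc $\Pi_{1}=[0,L)$ does the job. Likewise, if $\sweepdens$ has constant sign on $[0,L)\setminus Z$, one arc suffices.

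Otherwise $N_{1/2}\ge2$, and by \cref{prop:scarcity} it is finite, so there is a chain $s_{1}<\dots<s_{n}$ with $n=N_{1/2}$. As in the proof of \cref{prop:scarcity}, on each cyclic gap $[s_{j},s_{j+1}]$ I set
\[
  a_{j}:=\max\{s\in[s_{j},s_{j+1}]:\sigma_{j}\sweepdens(s)\ge\tfrac12\},\qquad \sigma_{j}:=\sgn\sweepdens(s_{j}),
\]
and take $\Pi_{j}:=(a_{j-1},a_{j}]$. These $n$ arcs partition the circle. By construction, on $(a_{j-1},a_{j}]$ no point carries sign $\sigma_{j-1}$ at amplitude $\tfrac12$ after $a_{j-1}$.

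To show that $\Pi_{j}\setminus Z$ carries only the sign $\sigma_{j}$, I argue by contradiction using maximality of the chain. Suppose a point $t\in\Pi_{j}\setminus Z$ has sign $-\sigma_{j}$. This point is either in $(a_{j-1},s_{j})$ or in $(s_{j},a_{j}]$. In either case, inserting $t$ together with a neighbouring point of opposite sign yields a longer alternating chain. For instance, if $t\in(s_{j},a_{j})$, insert $t$ and $a_{j}$ after $s_{j}$: the pattern $s_{j},t,a_{j}$ has signs $\sigma_{j},-\sigma_{j},\sigma_{j}$, and the next point $s_{j+1}$ has sign $-\sigma_{j}$. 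The chain gains two points, contradicting maximality. The case $t\in(a_{j-1},s_{j})$ is excluded symmetrically: maximality of $a_{j-1}$ already rules out sign $\sigma_{j-1}=-\sigma_{j}$ there.

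The bound $\abs{Z}\le C\excess$ is exactly the long-interval computation in the proof of \cref{prop:scarcity}:
\begin{itemize}
  \item on $Z\cap\{r\ge\tfrac9{10}\}$, \cref{lem:omega} gives $\langle\gamma,\tau\rangle^{2}=r^{2}-\sweepdens^{2}\ge\tfrac14$, so \cref{eq:B2} bounds the measure of this set by $4\excess$;
  \item $\abs{\{r<\tfrac9{10}\}}\le C\excess$ by \cref{eq:B1} and \cref{eq:depthbound}.
\end{itemize}
Finally, the count $\max(N_{1/2},1)\le C\max(\excess,1)$ follows from \cref{prop:scarcity}.

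\emph{Part (ii).} Assuming \cref{eq:avoidorigin}, \cref{lem:omega} gives $\varphi'=\sweepdens/r^{2}$. On $Z$ we have $\abs{\sweepdens}<\tfrac12$, and on $\{h\le\tfrac12\}$ we have $r\ge\tfrac12$, hence $r^{2}\ge\tfrac14$. Therefore $\abs{\varphi'}\le\tfrac12\cdot4=2$ there.

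The only delicate point is the maximality bookkeeping in part (i), namely checking that the inserted points keep the chain cyclically alternating, including across the wrap-around arc. The remaining steps are immediate from the results already established.
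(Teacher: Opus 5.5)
Your proof is correct and follows the paper's route. You read the partition off a maximal $\tfrac12$-alternating chain, exclude a wrong-sign point by inserting a pair of points that lengthens the chain, and bound $\abs{Z}$ by the long-interval estimate; the only difference is cosmetic: you cut at the $a_j$ alone, $\Pi_j=(a_{j-1},a_j]$ carrying sign $\sigma_j$, whereas the paper cuts at the $b_j$, $\Pi_j=[b_j,b_{j+1})$ carrying sign $-\sigma_j$, so your version avoids the second family of endpoints.
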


\begin{proof}
  (i) If $N_{1/2}=0$, then $\sweepdens$ omits one of the two values $\pm\tfrac12$ --- two parameters
  attaining them with opposite signs would form an alternating chain of length $2$ --- so $\sgn\sweepdens$ is
  constant off $Z$ on the single arc $[0,L)$, and only the bound on $\abs Z$, proved below without reference
  to any chain, remains; since $N_{1/2}$ is even, assume then $N_{1/2}\ge2$.
  Fix a chain realising $N_{1/2}$, which is attained because $N_{1/2}$ is a finite integer by
  \cref{prop:scarcity}, and let $I_{j}=[a_{j},b_{j}]$ be its transition intervals, as in the proof of that
  theorem.
  Set $\Pi_{j}:=[b_{j},b_{j+1})$, indices modulo $N_{1/2}$.
  These partition $[0,L)$: each absorbs the gap $[b_{j},a_{j+1}]$ together with the following transition
  interval $[a_{j+1},b_{j+1})$.
  Suppose now some $t\in\Pi_{j}$ has $\abs{\sweepdens(t)}\ge\tfrac12$ and $\sgn\sweepdens(t)=\sigma_{j}$, and
  recall $b_{j}\le s_{j+1}\le a_{j+1}$.
  If $t\le s_{j+1}$ then $\sigma_{j}\sweepdens(t)\ge\tfrac12$ with $a_{j}<t\le s_{j+1}$, contradicting the
  maximality of $a_{j}$.
  If $t>s_{j+1}$ then $s_{j+1}<t<a_{j+1}<s_{j+2}$: indeed $t<b_{j+1}$, and $a_{j+1}\le t$ would put $t$ in
  $[a_{j+1},s_{j+2}]$ with $\sigma_{j+1}\sweepdens(t)\le-\tfrac12$, contradicting the minimality of
  $b_{j+1}$.
  Inserting the \emph{pair} $t,a_{j+1}$ then produces an
  alternating chain of length $N_{1/2}+2$, contradicting maximality; a single insertion would not do, since an
  alternating cyclic chain has even length.
  Hence $\sgn\sweepdens$ is constant off $Z$ on each $\Pi_{j}$, equal to $-\sigma_{j}$.
  The bound on $\abs{Z}$ is the long-interval estimate of the proof of \cref{prop:scarcity}, applied to $Z$ in
  place of $\bigcup_{j}I_{j}$: on $Z\cap\{r\ge\tfrac9{10}\}$ one has $\langle\gamma,\tau\rangle^{2}\ge\tfrac14$,
  so \cref{eq:B2} bounds its measure by $4\excess$, while $\abs{\{r<\tfrac9{10}\}}\le C\excess$ by \cref{eq:B1}.

  (ii) On $Z\cap\{h\le\tfrac12\}$ one has $r\ge\tfrac12$ and $\abs{\sweepdens}<\tfrac12$, so
  $\abs{\varphi'}=\abs{\sweepdens}/r^{2}<2$ by \cref{eq:framerelations}; under \cref{eq:avoidorigin} the
  quantity is defined everywhere there.
\end{proof}

\subsection{A fold on the outer envelope}
\label{sec:lower-envelope}

\begin{definition}
  \label{def:envelope}
  Let $\Theta$ be the set of angles $\theta$ whose open ray $\{te^{i\theta}:t>0\}$ meets $\gamma([0,L])$.
  The \emph{envelope radius} is
  \[
    R(\theta):=\max\bigl\{t>0:\ te^{i\theta}\in\gamma([0,L])\bigr\},\qquad\theta\in\Theta,
  \]
  the \emph{outer envelope} is the set $\mathrm{Env}:=\{R(\theta)e^{i\theta}:\theta\in\Theta\}$ of the
  corresponding points, and a parameter $s$ is an \emph{envelope parameter} if $\gamma(s)\in\mathrm{Env}$,
  that is, if $\gamma(s)$ is the outermost point of the trace on its own ray.
\end{definition}

The maximum defining $R$ is attained, the trace being compact and, by \cref{eq:avoidorigin}, bounded away from
the origin; so every $\theta\in\Theta$ carries at least one envelope parameter.
About the origin, being outermost on a ray is the same as being shallowest on it, since $h(te^{i\theta})=1-t$;
no analogue of this holds about any other base point, and it is what makes the origin the right centre for the
envelope.

\begin{lemma}
  \label{lem:env}
  For $L\ge L_{0}(\bootstrap)$ there is a fold $s^{*}$ with $\gamma(s^{*})\in\mathrm{Env}$.
\end{lemma}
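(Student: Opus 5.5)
The plan is to argue by contradiction, following the heuristic of \cref{sec:heur-competitor}. Suppose no envelope parameter is a fold, so $\sweepdens\ne0$ at every $s$ with $\gamma(s)\in\mathrm{Env}$. The goal is then to show that the envelope parameters form a single closed arc of the curve on which $\sweepdens$ has a constant sign, and that this arc must be all of $[0,L)$. That would give $\sweepdens$ of one sign everywhere, which contradicts the balance of orientations forced by small excess once $L$ is large.

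\emph{Step 1: the envelope is a graph.} First I would show $\Theta=\mathbb{R}/2\pi\mathbb{Z}$. If some ray missed the trace, $\varphi$ would take values in an interval of length less than $2\pi$ modulo $2\pi$, so $\varphi$ would be bounded and $\int\abs{\varphi'}$ would be controlled. This is incompatible with the fact that, by \cref{eq:B1,eq:B2}, length at least $L-C\excess$ runs with $\abs{\varphi'}\ge\tfrac12$, at least once $L$ is large compared with the number $C\excess$ of sign changes from \cref{cor:arcs}: each constant-sign arc sweeps angle monotonically, and one of them has length $\gg1$. Next, at an envelope parameter $s_{0}$ with $\sweepdens(s_{0})\ne0$, the curve crosses the ray transversally. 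By compactness and injectivity, nearby rays have their outermost point on the same local branch; here I would use that the maximum $R$ is attained and that other branches of the trace are at positive distance from the outermost point along that ray. So $R$ is continuous near $\theta_{0}$, $\mathrm{Env}$ is locally the image of an arc through $s_{0}$ along which $\varphi$ is strictly monotone, and each angle has a unique envelope parameter. This makes $\mathrm{Env}$ a closed Jordan curve, the polar graph of $R$, contained in the trace.

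\emph{Step 2: the contradiction.} A closed connected subset of a Jordan curve that is itself a Jordan curve is the whole curve, so $\mathrm{Env}=\gamma([0,L])$. The curve then traverses a polar graph, and $\varphi$ is strictly monotone, so $\sweepdens$ never vanishes and has a constant sign. But then $\int_{0}^{L}\varphi'=2\pi\ind_{\gamma}(0)\in\{0,2\pi\}$ by \cref{lem:lift}, while $\abs{\int\varphi'}\ge\tfrac12\,\abs{\{\abs{\sweepdens}\ge\tfrac12,\ h\le\tfrac12\}}\ge c(L-C\excess)$ by \cref{cor:arcs} and \cref{eq:B1}. Using \cref{eq:bootstrap}, the right-hand side exceeds $2\pi$ for $L\ge L_{0}(\bootstrap)$, which is the contradiction.

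\emph{Main obstacle.} I expect the hard part to be Step 1: showing that the envelope parameters near a non-fold envelope point really form a single arc, and that $R$ is continuous. The danger is that $R$ may jump where two different branches have the same outermost radius, or where the outermost branch is tangent to the ray. The first thing I would try is to handle a jump through its endpoint. At a jump of $R$, the limiting outermost point from one side is a point of the trace lying on the jump ray, and the ray near it is tangent to the trace. Then $\sweepdens=0$ there, and that point is still outermost on its ray, hence an envelope fold. If this works, the continuity argument reduces to the transversal case already handled above.
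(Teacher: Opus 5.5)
Your Step~2 follows the paper's Case~2, and so does the part of Step~1 showing that, when no envelope parameter is a fold, $R$ is continuous and locally a polar graph. The local branch through the outermost point gives $R\ge\varrho$. Compactness and injectivity (the rest of the trace is at positive distance from $q$) then force $R=\varrho$. So $\mathrm{Env}$ is a Jordan curve inside the trace, hence equal to it, and the monotone lift has variation at least $c(L-C\excess)>2\pi$. The step you flag as the main obstacle is therefore not one: under the hypothesis, two branches cannot share an outermost point by injectivity, and tangency to the ray is exactly a fold.

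The gap is the preliminary claim $\Theta=S^{1}$, which you derive from energy. First, bounded $\varphi$ does not control $\int\abs{\varphi'}$. Second, your repair, that ``each constant-sign arc sweeps angle monotonically'', is false. The arcs $\Pi_{k}$ of \cref{cor:arcs} have constant $\sgn\sweepdens$ only off $Z=\{\abs{\sweepdens}<\tfrac12\}$, and on $Z$ the angle can run backwards without creating any $\tfrac12$-alternation. For example, a straight chord whose line passes at signed distance $d$ from the origin, with $\abs d<\tfrac12$, has $\sweepdens\equiv d$. It therefore lies in $Z$, and it sweeps an angle $2\arccos\abs d>\tfrac{2\pi}3$ in the sense given by the sign of $d$. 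Near the origin $\varphi'=\sweepdens/r^{2}$ is not even bounded, and no lower bound on $\dist(0,\gamma([0,L]))$ is available. So a long $\Pi_{k}$ need not have oscillation exceeding $2\pi$, and your argument does not exclude a ray missing the trace. Such backward sweeps do cost excess, so $\Theta=S^{1}$ is probably true in $\emb_{L}^{\bootstrap}$. Proving it your way would need a new estimate on the angle gained inside $\{h>\tfrac12\}$, which you have not supplied.

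The missing idea is that this step needs no energy at all. Suppose some ray misses the trace. Then $\Theta$, the image of the continuous map $\gamma/\abs\gamma$, is a proper closed arc of $S^{1}$, possibly degenerate. Let $\theta^{*}$ be an endpoint, let $q$ be the outermost point of the trace on its ray, and let $s_{q}$ be its parameter, unique by injectivity. For $s$ near $s_{q}$, the value $\varphi(s)$ modulo $2\pi$ stays on one side of $\theta^{*}$. So the lift has a local extremum at $s_{q}$ and $\varphi'(s_{q})=0$, which makes $s_{q}$ an envelope fold. This is the paper's Case~1: it proves the lemma outright when a ray is missed, and in your contradiction setup it gives $\Theta=S^{1}$ directly. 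With it substituted for your energy argument, the rest of your proposal goes through as written.
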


\begin{proof}
  By \cref{eq:avoidorigin} the map $s\mapsto\gamma(s)/\abs{\gamma(s)}$ is continuous, so the set $\Theta$ of
  \cref{def:envelope}, being its image, is compact and connected.

  \emph{Case 1: some ray misses the trace.}
  Then $\Theta$ is a proper compact connected subset of $S^{1}$, that is, a closed arc, possibly degenerate;
  let $\theta^{*}$ be an endpoint of that arc, let $q$ be the outermost point of the trace on the ray of
  $\theta^{*}$, and let $s_{q}$ be a parameter with $\gamma(s_{q})=q$.
  There is $\rho>0$ such that $\Theta$ meets $(\theta^{*}-\rho,\theta^{*}+\rho)$ on at most one side of
  $\theta^{*}$.
  For $s$ near $s_{q}$ the value $\varphi(s)$ lies in $\Theta$ modulo $2\pi$ and is close to
  $\varphi(s_{q})\equiv\theta^{*}$; hence the lift $\varphi$ has a local extremum at $s_{q}$, so
  $\varphi'(s_{q})=0$: the parameter $s_{q}$ is a fold, and $q\in\mathrm{Env}$ by the choice of $q$.

  \emph{Case 2: every ray meets the trace.}
  Then $R$ is defined on all of $S^{1}$, with $R\ge\min\abs\gamma>0$, and it is upper semicontinuous: if
  $\theta_{n}\to\theta$ and $R(\theta_{n})\to\ell$, then $\ell e^{i\theta}\in\gamma([0,L])$ by compactness,
  and $\ell\ge\min\abs\gamma>0$, so $\ell\le R(\theta)$.
  Suppose, for contradiction, that no envelope parameter is a fold.
  Fix $\theta_{1}$ and let $q:=R(\theta_{1})e^{i\theta_{1}}$.
  Since $\gamma$ is injective there is exactly one parameter over $q$; it is an envelope parameter, so
  $\varphi'\ne0$ there, and near that parameter the curve is a polar graph $t=\varrho(\theta)$ about the
  origin over a neighbourhood of $\theta_{1}$, with $\varrho$ of class $C^{1}$ and $R\ge\varrho$.
  As $R$ is upper semicontinuous and bounded below near $\theta_{1}$ by the continuous function $\varrho$
  with $R(\theta_{1})=\varrho(\theta_{1})$, it is continuous at $\theta_{1}$.

  It also coincides with $\varrho$ near $\theta_{1}$, and here injectivity is essential: continuity alone does
  not suffice, as $R=\varrho+\dist(\cdot,\theta_{1})$ shows.
  Write $I_{0}$ for the parameter interval on which $\gamma$ is the graph $\varrho$ and $K_{0}$ for its
  complement.
  Then $\gamma(K_{0})$ is compact and, $\gamma$ being injective and $q$ the image of a parameter of $I_{0}$,
  we have $q\notin\gamma(K_{0})$, so $d:=\dist(q,\gamma(K_{0}))>0$.
  By the continuity just proved, the outermost point $p(\theta)=R(\theta)e^{i\theta}$ satisfies
  $\abs{p(\theta)-q}<d$ for $\theta$ near $\theta_{1}$, hence $p(\theta)\in\gamma(I_{0})$; lying on the ray of
  angle $\theta$, it has radius $\varrho(\theta)$.
  So $R=\varrho$ near $\theta_{1}$, and $\mathrm{Env}$ is a closed curve of class $C^{1}$ contained in
  $\gamma([0,L])$.

  A closed connected subset of a Jordan curve is a point, a closed arc, or the whole curve, none of the first
  two being a closed curve; hence $\mathrm{Env}=\gamma([0,L])$.
  Every parameter is therefore an envelope parameter, and by the contradiction hypothesis $\varphi'$ vanishes
  nowhere; being continuous, it has constant sign, so the lift $\varphi$ is strictly monotone and its total
  variation over $[0,L]$ equals $\abs{\smash[b]{\int_{0}^{L}\varphi'}}=2\pi\ind_{\gamma}(0)\le2\pi$ by
  \cref{lem:lift}.
  On the other hand, on $G:=\{h\le\tfrac1{10},\abs{h'}\le\tfrac1{10}\}$ one has, by \cref{eq:framerelations},
  $\abs{\varphi'}=\sqrt{1-(h')^{2}}/(1-h)\ge\sqrt{1-\tfrac1{100}}\ge\tfrac9{10}$, while
  $\abs{[0,L]\setminus G}\le10\int h+100\int(h')^{2}\le C\excess$ by \cref{eq:B1,eq:depthbound,cor:hprime}.
  Hence the total variation is at least $\tfrac9{10}(L-C\excess)>2\pi$ for $L\ge L_{0}(\bootstrap)$, by
  \cref{eq:bootstrap}: a contradiction.
\end{proof}

\begin{proposition}
  \label{prop:uncovered}
  Let $s^{*}$ be as in \cref{lem:env}, $q^{*}:=\gamma(s^{*})$, $D:=h(s^{*})$, $\theta_{0}:=\varphi(s^{*})$,
  and let $\hat q^{*}:=q^{*}/\abs{q^{*}}\in\partial\disk$ be the boundary point on the same ray.
  Then $D>0$, the half-open segment $(q^{*},\hat q^{*}]$ does not meet $\gamma([0,L])$, and every parameter
  $s$ with $\gamma(s)$ on the ray of $\theta_{0}$ satisfies $h(s)\ge D$.
\end{proposition}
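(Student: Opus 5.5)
The plan is to read all three assertions off the definition of the envelope parameter, i.e.\ off the fact that $\gamma(s^{*})$ is the outermost point of the trace on its ray. The only substantive input is the fold condition, which is used to show $D>0$.

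First, the segment and the depth bound. Since $\gamma(s^{*})\in\mathrm{Env}$, we have $\abs{q^{*}}=R(\theta_{0})$, the maximum of $\{t>0:te^{i\theta_{0}}\in\gamma([0,L])\}$. Every point of $(q^{*},\hat q^{*}]$ has the form $te^{i\theta_{0}}$ with $R(\theta_{0})<t\le1$, so by maximality it is not on the trace. This already presupposes $\abs{q^{*}}<1$, which is the content of $D>0$ and is proved below; if $D=0$ the segment is empty and the claim is vacuous anyway. Now let $\gamma(s)$ lie on the ray of $\theta_{0}$. Its radius is at most $R(\theta_{0})$ by the definition of $R$, hence $h(s)=1-\abs{\gamma(s)}\ge1-R(\theta_{0})=D$. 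Here I use \cref{eq:avoidorigin}, so that every point of the trace lies on an open ray and $R$ is well defined.

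Second, $D>0$, which is the only step needing an argument. Suppose instead that $D=0$, so that $\abs{q^{*}}=1$ and $r$ attains its global maximum $1$ at $s^{*}$. Since $\gamma\in C^{1}$ and $r>0$, the function $r$ is differentiable there and $r'(s^{*})=0$. But $s^{*}$ is a fold, so \cref{lem:lift} gives $(r')^{2}+r^{2}(\varphi')^{2}=1$ with $\varphi'(s^{*})=0$, forcing $\abs{r'(s^{*})}=1$. This is a contradiction. Equivalently, at a fold the tangent is radial, and a $C^{1}$ curve confined to $\overline{\disk}$ must be tangent to $\partial\disk$ wherever it touches it.

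I expect no real obstacle. The one point that needs care is the differentiability of $r$ at an interior maximum, which holds because $\gamma\in W^{2,2}\subset C^{1}$ and the maximum is taken over the whole periodic parameter circle, so $s^{*}$ is never a boundary point. The remaining assertions are direct unwindings of \cref{def:envelope}.
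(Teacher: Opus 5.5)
Your proposal is correct and is essentially the paper's own proof. The paper gets $D>0$ the same way: $h=1-r$ would have an interior extremum at $s^{*}$, which forces $h'(s^{*})=0$ and contradicts $\abs{h'(s^{*})}=1$ at a fold. It then reads the other two assertions directly off the maximality defining $R(\theta_{0})$.
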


\begin{proof}
  The depth is nonnegative and, by \cref{eq:avoidorigin}, of class $C^{1}$; so if $D$ vanished then $s^{*}$
  would be a minimum of $h$ and $h'(s^{*})=0$, whereas $\abs{h'(s^{*})}=1$ at a fold by \cref{def:lift}.
  Hence $D>0$.
  Points of $(q^{*},\hat q^{*}]$ lie on the ray of $\theta_{0}$ at radius exceeding
  $R(\theta_{0})=\abs{q^{*}}=1-D$, so none of them is on the trace, by the maximality defining $R$.
  For the last assertion, such a parameter has $\abs{\gamma(s)}\le R(\theta_{0})$, again by maximality.
  The first two assertions are used in \cref{lem:blocking}; the third records that nothing lies above an
  envelope fold, which is what makes such a fold worth selecting.
\end{proof}

\subsection{The cost of the fold}
\label{sec:lower-head}

The fold is expensive because at a fold the curve moves radially at unit speed, so that the confinement
$h\ge0$ forces it to turn within a length proportional to its distance to the boundary.
No strand above the fold is used; the barrier is $\partial\disk$ itself.
The turn is priced in the chart in which the depth is the independent variable, where the natural weight makes
the estimate an application of Cauchy--Schwarz and produces the sharp constant.

Throughout this subsection we write $\beta:=-\psi$, so that \cref{eq:framerelations} becomes
\begin{equation}
  \label{eq:betaframe}
  h'=\sin\beta,\qquad (1-h)\varphi'=\cos\beta,\qquad \kappa=\varphi'+\beta',
\end{equation}
and we set
\begin{equation}
  \label{eq:Ghat}
  \widehat G(\beta):=\int_{0}^{\beta}\sqrt{\abs{\sin t}}\,dt,
  \qquad
  \dgconst:=\widehat G(\pi)=\int_{0}^{\pi}\sqrt{\sin t}\,dt=\frac{4\sqrt2\,\pi^{3/2}}{\Gamma(1/4)^{2}} .
\end{equation}
Numerically $\dgconst=2.39628\ldots$.
The substitution $\tau=\cot\beta$ turns $\widehat G$ into the function
$G(s)=\int_{0}^{s}(1+\tau^{2})^{-5/4}d\tau$ of Deckelnick and Grunau \cite[Lem.~4]{DeckelnickGrunau2007}, through
$G(\cot\beta)=\widehat G(\tfrac\pi2)-\widehat G(\beta)$; in particular $\dgconst$ is the constant they denote
$c_{0}$, which is the \emph{two-sided} integral $\int_{\mathbb R}(1+\tau^{2})^{-5/4}d\tau=B(\tfrac12,\tfrac34)$,
so that their $G(+\infty)=c_{0}/2$ matches $\widehat G(\tfrac\pi2)=\dgconst/2$; and \cref{lem:depthchart} is
the form taken in the disk by the first integral $\kappa\,(1+u'^{2})^{1/4}\equiv\text{const}$ which
characterises their solutions.
In this identification the roles of the coordinates are exchanged: their independent variable corresponds to
the depth, so that the first integral reads $\kappa^{2}\propto\abs{\sin\beta}$ in our frame, and not
$\kappa^{2}\propto\abs{\cos\beta}$ as a naive reading of the graph picture would suggest.

\begin{lemma}
  \label{lem:depthchart}
  Let $J$ be an arc on which $h$ is strictly monotone, and let
  $\Delta h_{J}:=\int_{J}\abs{h'}\,ds$ be the range of the depth on $J$.
  Then
  \begin{equation}
    \label{eq:sweepidentity}
    \int_{J}\kappa\sqrt{\abs{\sin\beta}}\,ds
    \;=\;\bigl[\widehat G(\beta)\bigr]_{\partial J}
    \;+\;\int_{J}\frac{\sqrt{\abs{\sin\beta}}\,\cos\beta}{1-h}\,ds ,
  \end{equation}
  and, with $v:=\kappa\abs{\sin\beta}^{-1/2}$ and $\bar v:=\Delta h_{J}^{-1}\int_{J}\kappa\sqrt{\abs{\sin\beta}}\,ds$,
  \begin{equation}
    \label{eq:sweepCS}
    \energy_{J}
    \;=\;\frac{1}{\Delta h_{J}}\Bigl(\int_{J}\kappa\sqrt{\abs{\sin\beta}}\,ds\Bigr)^{2}
    \;+\;\int_{J}\bigl(v-\bar v\bigr)^{2}\abs{\sin\beta}\,ds .
  \end{equation}
\end{lemma}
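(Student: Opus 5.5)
The plan is to derive \cref{eq:sweepidentity} by differentiating $\widehat G(\beta)$ along the arc. The weighted Cauchy--Schwarz identity \cref{eq:sweepCS} is then an exact variance decomposition with weight $\abs{\sin\beta}=\abs{h'}$, whose total mass on $J$ is $\Delta h_{J}$.

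For \cref{eq:sweepidentity}, first record the regularity. On $J$ the depth is strictly monotone. We work under \cref{eq:avoidorigin}, or else on the region $\{h\le\tfrac1{10}\}$ where the frame is defined, so $\beta$ is $W^{1,2}$, being determined by $\tau$ and $e_{r}$ via \cref{eq:frame}. The function $\widehat G$ is $C^{1}$ with derivative $\sqrt{\abs{\sin\beta}}$, so by the chain rule for absolutely continuous functions
\[
\frac{d}{ds}\widehat G(\beta)=\sqrt{\abs{\sin\beta}}\,\beta' .
\]
Substituting $\beta'=\kappa-\varphi'$ and $\varphi'=\cos\beta/(1-h)$ from \cref{eq:betaframe} gives
\[
\kappa\sqrt{\abs{\sin\beta}}=\frac{d}{ds}\widehat G(\beta)+\frac{\sqrt{\abs{\sin\beta}}\cos\beta}{1-h}.
\]
Integrating over $J$ yields \cref{eq:sweepidentity}. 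One point needs care: $\beta$ is a continuous angle, so $\widehat G$ must be read with $\abs{\sin t}$ on all of $\mathbb{R}$. This is why the definition \cref{eq:Ghat} has the absolute value, and the chain rule is unaffected.

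For \cref{eq:sweepCS}, note that strict monotonicity of $h$ alone does not exclude isolated zeros of $h'=\sin\beta$. However, such zeros form a null set, because $\sin\beta=0$ on a set of positive measure would make $h$ constant on density points there; this contradicts strict monotonicity only in spirit, so instead use $\int_{J}\abs{h'}=\Delta h_{J}$ and allow $v$ to be defined a.e. off $\{\sin\beta=0\}$. On that set of measure zero nothing is charged. Since
\[
\kappa^{2}=v^{2}\abs{\sin\beta}\quad\text{a.e.},\qquad \int_{J}v\abs{\sin\beta}=\int_{J}\kappa\sqrt{\abs{\sin\beta}}=\bar v\,\Delta h_{J},
\]
expanding $\int_{J}(v-\bar v)^{2}\abs{\sin\beta}$ gives
\[
\energy_{J}-2\bar v\cdot\bar v\,\Delta h_{J}+\bar v^{2}\Delta h_{J}=\energy_{J}-\bar v^{2}\Delta h_{J}.
\]
This is \cref{eq:sweepCS}, provided $v\in L^{2}(\abs{\sin\beta}\,ds)$, which holds because that norm is exactly $\energy_{J}<\infty$.

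The main obstacle is purely technical: justifying the chain rule and the weighted integrals where $\sin\beta$ vanishes, and making sure the frame quantities are defined on $J$. Both are handled by the absolute continuity of $\beta$ and by working a.e.; no estimate is needed.
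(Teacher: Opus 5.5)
Your route is the paper's. You obtain \cref{eq:sweepidentity} from the chain rule for $\widehat G(\beta)$ combined with $\kappa=\varphi'+\beta'$ and $(1-h)\varphi'=\cos\beta$. You obtain \cref{eq:sweepCS} by expanding a variance with weight $\abs{h'}=\abs{\sin\beta}$, which is the paper's change of variables $ds=\abs{\sin\beta}^{-1}dh$ written in arclength. The first part is fine; the step that fails is your handling of $Z_{0}:=\{s\in J:\sin\beta(s)=0\}$.

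You assert that $Z_{0}$ is null and that nothing is charged there; both claims are false in general.

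\emph{$Z_{0}$ need not be null.} Strict monotonicity does not exclude $\abs{Z_{0}}>0$. Let $K$ be a fat Cantor set, and set $h':=c\,\dist(\cdot,K)^{2}$ with $c$ small, $h(0)\in(0,\tfrac1{10})$, and $\varphi':=\sqrt{1-(h')^{2}}/(1-h)$. This defines a $W^{2,\infty}$ arc parametrised by arclength on which $h$ is strictly increasing and $Z_{0}=K$.

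\emph{The energy does charge $Z_{0}$.} Since $\beta\in W^{1,2}$, we have $\beta'=0$ a.e.\ on each level set $\{\beta=k\pi\}$. Hence $\kappa=\varphi'=\pm1/(1-h)$ a.e.\ on $Z_{0}$, whereas $v^{2}\abs{\sin\beta}=0$ there for any finite choice of $v$. So $\kappa^{2}=v^{2}\abs{\sin\beta}$ a.e.\ fails, and your expansion yields \cref{eq:sweepCS} only up to the missing term $\int_{Z_{0}}\kappa^{2}\,ds\ge\abs{Z_{0}}$.

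\emph{The repair.} Do not divide at all. Read the defect integrand as $(v-\bar v)^{2}\abs{\sin\beta}=\bigl(\kappa-\bar v\sqrt{\abs{\sin\beta}}\bigr)^{2}$, which is defined everywhere and equals $\kappa^{2}$ on $Z_{0}$, and expand:
\[
  \int_{J}\bigl(\kappa-\bar v\sqrt{\abs{\sin\beta}}\bigr)^{2}ds
  =\energy_{J}-2\bar v\int_{J}\kappa\sqrt{\abs{\sin\beta}}\,ds+\bar v^{2}\int_{J}\abs{h'}\,ds
  =\energy_{J}-\bar v^{2}\,\Delta h_{J}.
\]
This is \cref{eq:sweepCS} for every $J$, with no claim about $Z_{0}$.

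The paper's substitution $ds=\abs{\sin\beta}^{-1}dh$ passes over the same point silently. It is immaterial there, because the lemma is applied only in \cref{prop:head}, on the two halves of $F_{\epsilon}$. On those halves $\abs{\sin\beta}\ge\sin\epsilon>0$, so $Z_{0}=\emptyset$.
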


\begin{proof}
  By \cref{eq:betaframe}, $\kappa\,ds=d\varphi+d\beta$ and $d\varphi=\frac{\cos\beta}{1-h}\,ds$, while
  $\int\sqrt{\abs{\sin\beta}}\,d\beta=\bigl[\widehat G(\beta)\bigr]$; this is \cref{eq:sweepidentity}.
  For \cref{eq:sweepCS}, monotonicity of $h$ allows the depth to be used as the variable of integration:
  $ds=\abs{\sin\beta}^{-1}\,d h$, so $\energy_{J}=\int v^{2}\abs{\sin\beta}\,ds=\int v^{2}\,dh$ and
  $\int v\,dh=\int_{J}\kappa\sqrt{\abs{\sin\beta}}\,ds$.
  Expanding $\int(v-\bar v)^{2}dh$ gives the identity.
\end{proof}

The second term in \cref{eq:sweepidentity} is the only place where the disk differs from a half-plane; it is
bounded by $\abs{J}/(1-\max_{J}h)$.
The second term in \cref{eq:sweepCS} is the defect in Cauchy--Schwarz, and is recorded because it is coercive:
an arc whose energy is close to the bound below has $\kappa^{2}$ close to a multiple of $\abs{\sin\beta}$, which
is the first integral of the free elastica.

\subsection{The cost of the passages beneath it}
\label{sec:lower-dip}

The passages beneath the fold are priced on disjoint parameter windows, one per passage; the windows are
produced in \cref{sec:lower-sharp} by the blocking argument, and on each of them the model functional
$\int(2h+(h'')^{2})$ is bounded below by a calibration inequality.
Here we record how the charge of an arc controls that functional.
The conversion rests on the second splitting \cref{eq:master2} of the master identity, whose first term is
$1-r^{2}=2h-h^{2}$, exactly the linear term of the model functional, and whose third term controls $(h'')^{2}$
through the following identity.

\begin{lemma}
  \label{lem:identity}
  On any arc contained in $\{h\le\tfrac1{10}\}$,
  \begin{equation}
    \label{eq:exactidentity}
    h''\;=\;\kappa\cos\psi-\frac{\cos^{2}\psi}{1-h}
    \;=\;\bigl(\kappa-\sweepdens\bigr)\cos\psi-e,
    \qquad
    e:=\cos^{2}\psi\,\frac{2h-h^{2}}{1-h}\in[0,3h] .
  \end{equation}
  Consequently $\abs{h''}\le\abs{\kappa-\sweepdens}+3h$, and for every $\lambda\in(0,1)$
  \begin{equation}
    \label{eq:hsecondbound}
    \bigl(\kappa-\sweepdens\bigr)^{2}\ \ge\ (1-\lambda)\,(h'')^{2}-\frac{9}{\lambda}\,h^{2} .
  \end{equation}
\end{lemma}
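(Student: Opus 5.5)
The plan is to differentiate the frame relations \cref{eq:framerelations} once more, and then read off the two bounds by elementary algebra. On $\{h\le\tfrac1{10}\}$ one has $r\ge\tfrac9{10}$, so $h\in W^{2,2}$ and $\psi$ is absolutely continuous there. The last relation in \cref{eq:framerelations} gives $\psi'=\varphi'-\kappa$. Differentiating $h'=-\sin\psi$ then yields
\[
  h''=-\cos\psi\,\psi'=\kappa\cos\psi-\varphi'\cos\psi .
\]
Substituting $\varphi'=\cos\psi/(1-h)$ gives the first expression in \cref{eq:exactidentity}.

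For the second expression, I use $\sweepdens=r\cos\psi=(1-h)\cos\psi$ from \cref{eq:sweepdef}. This gives
\[
  \kappa\cos\psi-\frac{\cos^{2}\psi}{1-h}=(\kappa-\sweepdens)\cos\psi+\cos^{2}\psi\Bigl((1-h)-\frac{1}{1-h}\Bigr).
\]
The bracket equals $-\dfrac{2h-h^{2}}{1-h}$, which is exactly $-e/\cos^{2}\psi$. The range of $e$ follows from three facts: $0\le2h-h^{2}\le2h$, $1-h\ge\tfrac9{10}$, and $\cos^{2}\psi\le1$. Together these give $0\le e\le\tfrac{20}{9}h\le3h$.

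The consequences are then routine. The triangle inequality together with $\abs{\cos\psi}\le1$ gives $\abs{h''}\le\abs{\kappa-\sweepdens}+3h$. For \cref{eq:hsecondbound}, I square the bound $\abs{h''}\le a+b$, where $a=\abs{\kappa-\sweepdens}$ and $b=3h$. Young's inequality in the form $(a+b)^{2}\le(1+\mu)a^{2}+(1+\mu^{-1})b^{2}$ with $1+\mu=(1-\lambda)^{-1}$ gives $(1-\lambda)(h'')^{2}\le a^{2}+\tfrac{1-\lambda}{\lambda}\cdot9h^{2}$, and this is stronger than the claim.

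The only point needing care is the regularity. I need the chain rule for $\sin\psi$ and the identity $\psi'=\varphi'-\kappa$ to hold almost everywhere. Under \cref{eq:avoidorigin}, \cref{lem:lift} gives $\varphi\in W^{2,2}$, and $\psi$ is the tangent angle minus $\varphi$ (up to a constant shift), so $\psi\in W^{1,2}$ and both facts are immediate.
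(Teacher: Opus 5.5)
Your proposal is correct and follows the paper's proof essentially line by line: you differentiate $h'=-\sin\psi$, substitute $\psi'=\varphi'-\kappa$ and $\varphi'=\cos\psi/(1-h)$, rewrite using $\sweepdens=(1-h)\cos\psi$, and finish with a weighted Young inequality. Your weight $1+\mu=(1-\lambda)^{-1}$ even gives the slightly better constant $9(1-\lambda)/\lambda$, where the paper weights by $1+\lambda$ and then uses $(1+\lambda)^{-1}\ge1-\lambda$; the only blemish is a harmless sign slip in your regularity remark, since by \cref{eq:frame} $\psi$ is $\varphi$ minus the tangent angle plus $\tfrac\pi2$, not the other way round.
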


\begin{proof}
  Differentiating $h'=-\sin\psi$ gives $h''=-\psi'\cos\psi$, and substituting $\psi'=\varphi'-\kappa$ with
  $\varphi'=\cos\psi/(1-h)$ yields the first identity; note that no division by $\cos\psi$ occurs, so the identity
  is valid also where the motion is purely radial.
  Since $\sweepdens=r\cos\psi=(1-h)\cos\psi$ by \cref{eq:sweepdef}, and
  $\frac1{1-h}-(1-h)=\frac{2h-h^{2}}{1-h}$,
  \[
    \kappa\cos\psi-\frac{\cos^{2}\psi}{1-h}
    =(\kappa-\sweepdens)\cos\psi-\cos^{2}\psi\Bigl(\frac1{1-h}-(1-h)\Bigr)
    =(\kappa-\sweepdens)\cos\psi-e ,
  \]
  which is the second form, and $0\le e\le2h/(1-h)\le3h$ on $\{h\le\tfrac1{10}\}$.
  Hence $\abs{h''}\le\abs{\kappa-\sweepdens}+3h$, so that
  $(h'')^{2}\le(1+\lambda)(\kappa-\sweepdens)^{2}+9(1+\lambda^{-1})h^{2}$, and dividing by $1+\lambda$ gives
  \cref{eq:hsecondbound}, since $(1+\lambda)^{-1}\ge1-\lambda$ and $9(1+\lambda^{-1})/(1+\lambda)=9/\lambda$.
\end{proof}

\begin{proposition}
  \label{prop:conversion}
  Let $J$ be an arc contained in $\{h\le\tfrac1{10}\}$, let $H:=\max_{J}h$, and let $\lambda\in(0,1)$.
  Then
  \begin{equation}
    \label{eq:conversion}
    \widehat{\excess}_{J}\ \ge\ (1-\lambda)\int_{J}\Bigl(2h+(h'')^{2}\Bigr)ds\ -\ \frac{10}{\lambda}\,H\int_{J}h\,ds ,
    \qquad\text{and}\qquad
    \int_{J}h\,ds\ \le\ \widehat{\excess}_{J} .
  \end{equation}
\end{proposition}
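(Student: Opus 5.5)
The plan is to start from the second splitting \cref{eq:master2} of the master identity on $J$ and discard the middle term:
\[
  \widehat{\excess}_{J}\;\ge\;\int_{J}\bigl(1-r^{2}\bigr)ds+\int_{J}\bigl(\kappa-\sweepdens\bigr)^{2}ds .
\]
The second assertion of \cref{eq:conversion} is then \cref{cor:localbudgets}, namely $\int_J h\le\int_J(1-r^2)\le\widehat{\excess}_J$ by \cref{eq:depthbound}. So only the first inequality needs work.

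For the first term, use $1-r^{2}=2h-h^{2}$. Since $\lambda<1$, we have $2h\ge 2(1-\lambda)h$, and hence
\[
  1-r^{2}\;\ge\;2(1-\lambda)h+2\lambda h-h^{2}.
\]
The surplus $2\lambda h$ may be discarded, and $h^{2}\le Hh$ on $J$, so
\[
  \int_J(1-r^{2})\;\ge\;(1-\lambda)\int_J 2h-H\int_J h .
\]

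For the curvature term, \cref{lem:identity} applies because $J\subset\{h\le\tfrac1{10}\}$. The pointwise bound \cref{eq:hsecondbound} gives
\[
  (\kappa-\sweepdens)^{2}\;\ge\;(1-\lambda)(h'')^{2}-\tfrac9\lambda h^{2}.
\]
Integrating over $J$ and using $h^{2}\le Hh$ once more gives
\[
  \int_J(\kappa-\sweepdens)^2\;\ge\;(1-\lambda)\int_J(h'')^2-\tfrac9\lambda H\int_J h .
\]

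Adding the two estimates, the error terms total $\bigl(1+\tfrac9\lambda\bigr)H\int_J h$. Since $\lambda<1$ we have $1\le 1/\lambda$, so this is at most $\tfrac{10}\lambda H\int_J h$, which is \cref{eq:conversion}.

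The only point needing care is regularity: $h''$ must make sense on $J$. Under \cref{eq:avoidorigin}, and in any case on $\{h\le\tfrac1{10}\}$ where $r\ge\tfrac9{10}$, $h$ is $W^{2,2}$, so the identity \cref{eq:exactidentity} holds almost everywhere and the integrals are finite. I expect no real obstacle beyond this bookkeeping.
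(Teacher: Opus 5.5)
Your proposal is correct and is essentially the paper's own argument. You drop $\int_J\langle\gamma,\tau\rangle^{2}$ from \cref{eq:master2}, insert $1-r^{2}=2h-h^{2}$ and \cref{eq:hsecondbound}, bound $\int_J h^{2}\le H\int_J h$, absorb $1+9/\lambda\le 10/\lambda$, and read the second inequality off \cref{cor:localbudgets}, which are exactly the paper's steps.
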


\begin{proof}
  The second inequality is \cref{cor:localbudgets}.
  For the first, discard the middle term of \cref{eq:master2}, insert $1-r^{2}=2h-h^{2}$ and
  \cref{eq:hsecondbound}:
  \[
    \widehat{\excess}_{J}
    \ \ge\ \int_{J}\bigl(2h-h^{2}\bigr)+\int_{J}(\kappa-\sweepdens)^{2}
    \ \ge\ (1-\lambda)\int_{J}\Bigl(2h+(h'')^{2}\Bigr)-\Bigl(1+\frac9\lambda\Bigr)\int_{J}h^{2} ,
  \]
  and $\int_{J}h^{2}\le H\int_{J}h$ with $1+9/\lambda\le10/\lambda$.
\end{proof}

\begin{remark}
  \label{rem:conversionexact}
  Nothing in \cref{prop:conversion} depends on the sign of the curvature, and no hypothesis on $h'$ is needed.
  Both features come from \cref{eq:master2}: the term $(\kappa-\sweepdens)^{2}$ charges wrong-way curvature and
  right-way curvature alike, and the term $\langle\gamma,\tau\rangle^{2}=(1-h)^{2}(h')^{2}$, discarded above,
  is what a finer conversion would keep; see \cref{rem:whyLthird}.
\end{remark}

\subsection{The sharp constant}
\label{sec:lower-sharp}

The estimates above were arranged to produce the exponent, and the constants they carry are not optimal.
We now show that the same architecture, run with the sharp constant in each of its two model problems,
determines the coefficient of $L^{4/9}$.
Two constants enter, both already met: $\dgconst$ of \cref{eq:Ghat}, from the fold, and
\begin{equation}
  \label{eq:dipconst}
  \dipconst:=\frac{4\quartrad^{5}}{135}=\frac{32}{15}\,72^{1/4}=6.2142946779\ldots,
  \qquad \quartrad:=72^{1/4},
\end{equation}
from the dip; $\quartrad$ is the half-width of the optimal profile.
Throughout this subsection $\gamma\in\emb_{L}^{\bootstrap}$ satisfies \cref{eq:avoidorigin}, the parameter
$s^{*}$ is the envelope fold of \cref{lem:env}, and $q^{*}$, $D$, $\theta_{0}$, $\hat q^{*}$ are as in
\cref{prop:uncovered}.
We fix once and for all the angular threshold
\begin{equation}
  \label{eq:epschoice}
  \epsilon:=L^{-8/45},
\end{equation}
whose exponent is $\tfrac25\cdot\tfrac49$: it will transpire that the fold has depth $\Delta\asymp L^{-4/9}$,
and \cref{eq:epschoice} equalises at that depth the two error terms of \cref{prop:head} below.
Since $\cos\psi(s^{*})=0$ we have $\abs{h'(s^{*})}=1$; let $s_{-}:=\sup\{s<s^{*}:h'(s)=0\}$ and
$s_{+}:=\inf\{s>s^{*}:h'(s)=0\}$, which exist because $h'$ is continuous by \cref{rem:avoidorigin} and
vanishes at the extrema of $h$, and the depth is strictly monotone on $[s_{-},s_{+}]$: a fold is not a
turning point of the depth but a point of radial motion inside a single monotone descent.
Define
\begin{equation}
  \label{eq:Fdef}
  \begin{gathered}
    F_{\epsilon}:=\text{the maximal subinterval of }[s_{-},s_{+}]\text{ containing }s^{*}\\
    \text{on which }\abs{h'}\ge\sin\epsilon,
    \qquad
    \Delta:=\max_{F_{\epsilon}}h .
  \end{gathered}
\end{equation}
Since $\abs{h'(s^{*})}=1>\sin\epsilon$ while $h'(s_{\pm})=0$, the set $F_{\epsilon}$ is a nondegenerate
compact interval, contained in the open interval $(s_{-},s_{+})$, containing $s^{*}$ in its interior, and
carrying $\abs{h'}=\sin\epsilon$ at both endpoints; moreover
$\abs{F_{\epsilon}}\le\Delta/\sin\epsilon$, the depth being monotone with speed at least $\sin\epsilon$ along
it.
Note $\Delta\ge D$, and that neither is yet known to be small; smallness is \emph{established}, not assumed,
in \cref{lem:shallowfold} below, by running the blocking argument at an auxiliary shallow level.

\subsubsection*{The fold, both branches}

\begin{proposition}
  \label{prop:head}
  If $\Delta\le\tfrac12$, then
  \[
    \energy_{F_{\epsilon}}\ \ge\ \frac{\dgconst^{2}}{\Delta}
    \Bigl(1-C\epsilon^{3/2}-C\,\frac{\Delta}{\epsilon}\Bigr).
  \]
\end{proposition}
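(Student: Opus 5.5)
We apply \cref{lem:depthchart} on $J:=F_{\epsilon}$, where $h$ is strictly monotone. Two inputs are needed. First, the range satisfies $\Delta h_{J}\le\Delta$: the depth is monotone on $J$, takes values in $[0,\Delta]$ there, and $\Delta=\max_{F_{\epsilon}}h$. Second, we need a lower bound for $\int_{J}\kappa\sqrt{\abs{\sin\beta}}\,ds$. By \cref{eq:sweepCS}, dropping the nonnegative Cauchy--Schwarz defect,
\[
  \energy_{F_{\epsilon}}\ \ge\ \frac{1}{\Delta}\Bigl(\int_{J}\kappa\sqrt{\abs{\sin\beta}}\,ds\Bigr)^{2},
\]
provided that integral is nonnegative, which will follow from the bound below. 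So it suffices to show
\[
  \int_{J}\kappa\sqrt{\abs{\sin\beta}}\,ds\ \ge\ \dgconst\bigl(1-C\epsilon^{3/2}\bigr)-C\,\frac{\Delta}{\epsilon}.
\]
Squaring then gives $\dgconst^{2}\Delta^{-1}(1-C\epsilon^{3/2}-C\Delta/\epsilon)$, after absorbing the cross and square terms. This absorption is harmless: if $\Delta/\epsilon$ is not small the claimed bound is vacuous.

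For that integral we use \cref{eq:sweepidentity}. The error term is bounded by $\abs{J}/(1-\max_{J}h)\le 2\abs{J}\le 2\Delta/\sin\epsilon\le C\Delta/\epsilon$, using $\Delta\le\tfrac12$ and $\abs{F_{\epsilon}}\le\Delta/\sin\epsilon$. What remains is the boundary term $[\widehat G(\beta)]_{\partial J}$, and here the geometry enters.

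\emph{Endpoints.} On $J$ we have $h'=\sin\beta$ of constant sign with $\abs{\sin\beta}\ge\sin\epsilon$. Hence $\beta$ stays in a single component of $\{\abs{\sin\beta}\ge\sin\epsilon\}$ modulo $2\pi$, namely $[\epsilon,\pi-\epsilon]$ or its negative. At the endpoints $\abs{\sin\beta}=\sin\epsilon$, so $\beta(\partial J)\in\{\epsilon,\pi-\epsilon\}$ up to sign and a common multiple of $2\pi$. The proof needs the two endpoints to carry \emph{different} values. If they do, then $\abs{[\widehat G(\beta)]_{\partial J}}=\widehat G(\pi-\epsilon)-\widehat G(\epsilon)=\dgconst-2\int_{0}^{\epsilon}\sqrt{\sin t}\,dt=\dgconst-O(\epsilon^{3/2})$, which is exactly the required bound. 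If instead the endpoints carry the same value, the boundary term vanishes and the bound fails. This identification is the main obstacle.

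The two endpoints correspond to $\cos\beta=\pm\cos\epsilon$, i.e.\ to $(1-h)\varphi'$ being near $+1$ at one end and near $-1$ at the other. So we must show that $\varphi'$ has opposite signs at $\partial F_{\epsilon}$: the angular motion reverses across the envelope fold, rather than the curve dipping radially and returning in the same angular sense. Here the envelope property of \cref{prop:uncovered} is used. Suppose $\cos\beta$ had the same sign at both ends. Then $\beta$ would pass through $\pm\pi/2$ at $s^{*}$ and return to the same side, so the depth is monotone while the arc crosses the ray of $\theta_{0}$ at $s^{*}$ moving radially. A crossing at $s^{*}$ by a branch with $\varphi'$ of one sign, combined with monotone depth on both sides, would place a point of $\gamma$ near $s^{*}$ on the ray at depth less than $D$. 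This contradicts the maximality of $R(\theta_{0})$, which forbids trace points on $(q^{*},\hat q^{*}]$. I expect this step to need care, in particular:
\begin{itemize}
  \item distinguishing $\varphi'$ having a strict local extremum of $\varphi$ at $s^{*}$ (a genuine turnaround) from a degenerate inflection;
  \item possibly handling the same-sign case differently: if $\varphi$ is monotone through $s^{*}$, $\beta$ makes a net rotation near $\pm\pi$ in the opposite direction, and we would still extract $\abs{[\widehat G]}\ge\dgconst-O(\epsilon^{3/2})$.
\end{itemize}
The sign of $\kappa$ in the integral must then be matched to the orientation. Replacing $\kappa$ by $-\kappa$ if necessary, which is harmless since only the square is used, we obtain the bound.
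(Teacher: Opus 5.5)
The range bound, the estimate of the correction term in \cref{eq:sweepidentity}, and the evaluation $\abs{[\widehat G(\beta)]_{\partial F_{\epsilon}}}=\dgconst-2\int_{0}^{\epsilon}\sqrt{\sin t}\,dt$ when the two endpoints carry different values are all fine. The gap is exactly the step you flag, and it cannot be closed as you propose, because the claim is false: nothing forces $\varphi'$ to change sign across an envelope fold.

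Take the ray of $\theta_{0}$ as a vertical axis through $q^{*}$, and let the curve near $s^{*}$ be a cubic inflection tangent to it, with angular offset $\approx-(\text{radial offset})^{3}$. Then:
\begin{itemize}
\item $\varphi$ is strictly monotone through $s^{*}$, with $\varphi'(s^{*})=0$;
\item the shallow part of the branch lies strictly on one side of the ray, and the branch meets the ray only at $q^{*}$;
\item the rest of the curve may cross $\theta_{0}$ below $q^{*}$, so $(q^{*},\hat q^{*}]$ stays free and $s^{*}$ is a legitimate envelope fold.
\end{itemize}
Your contradiction argument breaks at exactly this point. A branch along which $\varphi'$ has one sign meets the ray once, at $s^{*}$ itself, and puts no point on $(q^{*},\hat q^{*}]$. \Cref{lem:env} does not help either: it only asserts that such a fold exists and says nothing about the sign of $\varphi'$ around it. As remarked after \cref{def:lift}, the sense of travel need not reverse at a fold.

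In this configuration $\beta$ rises from $\epsilon$ to $\tfrac\pi2$ and falls back to $\epsilon$. Hence $[\widehat G(\beta)]_{\partial F_{\epsilon}}=0$, and \cref{eq:sweepCS} applied to $F_{\epsilon}$ in one piece yields nothing. Your fallback does not rescue this. Strict monotonicity of $h$ confines the lift of $\beta$, after a reflection, to $[\epsilon,\pi-\epsilon]$, so no net rotation near $\pm\pi$ is available.

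The missing idea is to cut $F_{\epsilon}$ at $s^{*}$, where $\beta=\tfrac\pi2$ exactly. Each half then runs between $\tfrac\pi2$ and one of $\epsilon,\pi-\epsilon$. Since $\sin(\pi-t)=\sin t$, both possibilities give $\abs{[\widehat G(\beta)]}\ge\tfrac{\dgconst}{2}-\tfrac23\epsilon^{3/2}$ on each half, whatever the endpoint values are. Apply \cref{lem:depthchart} to each half, with depth ranges $x,y>0$; monotonicity of $h$ across $s^{*}$ gives $x+y\le\Delta$. This yields $\energy_{F_{\epsilon}}\ge a^{2}\bigl(\tfrac1x+\tfrac1y\bigr)\ge\tfrac{4a^{2}}{x+y}\ge\tfrac{4a^{2}}{\Delta}$, with $a=\tfrac{\dgconst}{2}-\tfrac23\epsilon^{3/2}-C\Delta/\epsilon$. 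The harmonic-mean step restores the factor $4$, and with it $\dgconst^{2}$. This treats the reversing and the non-reversing fold uniformly and needs no appeal to \cref{prop:uncovered}. In the reversing case it gives the same bound as your single-piece argument.
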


\begin{proof}
  On $F_{\epsilon}$ the depth is strictly monotone, so $h'=\sin\beta$ has a fixed sign and a continuous lift
  of $\beta$ along $F_{\epsilon}$ takes values in $(0,\pi)$ or in $(-\pi,0)$; after a reflection assume the
  former.
  At $s^{*}$, $\cos\beta=\cos\psi=0$, so $\beta(s^{*})=\tfrac\pi2$ exactly; at each endpoint of
  $F_{\epsilon}$, $\sin\beta=\sin\epsilon$, so $\beta\in\{\epsilon,\pi-\epsilon\}$ there.
  Apply \cref{lem:depthchart} separately to the two halves of $F_{\epsilon}$ on either side of $s^{*}$.
  Since $s^{*}$ is interior to $F_{\epsilon}$, each half is nondegenerate and has one endpoint at $s^{*}$ and
  the other at an endpoint of $F_{\epsilon}$; this is all that is asked of the two sides, and the depth
  $D=h(s^{*})$ of the fold plays no role, the endpoints of $F_{\epsilon}$ carrying $\abs{h'}=\sin\epsilon$
  whatever their depth.
  On each half $h$ is monotone; writing $x,y$ for the two depth ranges, both are positive, and monotonicity
  \emph{across} $s^{*}$ gives
  \[
    x+y=\int_{F_{\epsilon}}\abs{h'}\,ds=\max_{F_{\epsilon}}h-\min_{F_{\epsilon}}h\ \le\ \Delta ,
  \]
  which is the only property of the two ranges that is used.
  Each half has $\widehat G$-sweep, in absolute value, at least
  \[
    \min\Bigl(\widehat G(\tfrac\pi2)-\widehat G(\epsilon),\ \widehat G(\pi-\epsilon)-\widehat G(\tfrac\pi2)\Bigr)
    \;=\;\frac{\dgconst}2-\int_{0}^{\epsilon}\sqrt{\sin t}\,dt
    \;\ge\;\frac{\dgconst}2-\tfrac23\,\epsilon^{3/2},
  \]
  the two members being equal because $\sin(\pi-t)=\sin t$; and the correction term of
  \cref{eq:sweepidentity} is at most $\abs{F_{\epsilon}}/(1-\Delta)\le2\Delta/\sin\epsilon\le C\Delta/\epsilon$.
  Hence each half satisfies, by \cref{eq:sweepCS},
  $\energy_{\mathrm{half}}\ge a^{2}/x$ with $a:=\tfrac{\dgconst}2-\tfrac23\epsilon^{3/2}-C\Delta/\epsilon$; we
  may assume $a>0$, since otherwise the right-hand side of the claim is nonpositive once the constant $C$
  there is chosen large enough.
  No lower bound on $x$ or $y$ is needed, and none is available: a half of small depth range simply carries
  more energy, its sweep being unchanged.
  Finally
  $\energy_{F_{\epsilon}}\ge a^{2}\bigl(\tfrac1x+\tfrac1y\bigr)\ge\tfrac{4a^{2}}{x+y}\ge\tfrac{4a^{2}}{\Delta}$,
  and $4a^{2}\ge\dgconst^{2}\bigl(1-C\epsilon^{3/2}-C\Delta/\epsilon\bigr)$.
\end{proof}

The split into two halves is not a matter of convenience.
Applied to $F_{\epsilon}$ at once, \cref{eq:sweepCS} would control the \emph{signed} sweep
$\bigl[\widehat G(\beta)\bigr]_{\partial F_{\epsilon}}$, and that quantity vanishes whenever $\beta$ returns
to its initial value --- that is, whenever $\varphi'$ fails to change sign at the fold, which
\cref{def:lift} explicitly allows.
Splitting at $s^{*}$, where $\beta=\tfrac\pi2$, replaces one possibly vanishing sweep by two of size
$\dgconst/2$ each, and the harmonic mean $\tfrac1x+\tfrac1y\ge\tfrac4{x+y}$ restores the factor $4$; this is
where the constant $\dgconst^{2}$, and with it $\cstar$, is produced.

\subsubsection*{Blocking}

The barrier is assembled from the fold and the ray above it; the level at which it blocks is a parameter,
because the smallness of $\Delta$ is not yet available.
Throughout, $s^{*}$ is the envelope fold of \cref{lem:env} and $q^{*}=\gamma(s^{*})$, $D=h(s^{*})$,
$\theta_{0}=\varphi(s^{*})$, $\hat q^{*}=q^{*}/\abs{q^{*}}$ are as in \cref{prop:uncovered}.

\begin{lemma}
  \label{lem:blocking}
  Let $0<\delta\le\min\bigl(\Delta,\tfrac1{10}\sin\epsilon\bigr)$ and set
  \[
    \mathcal C_{\delta}:=
    \begin{cases}
      \bigl(F_{\epsilon}\cap\{h\le\delta\}\bigr)\ \cup\ [q^{*},\hat q^{*}], & D\le\delta,\\[3pt]
      [q^{*},\hat q^{*}]\cap\{h\le\delta\}, & D>\delta,
    \end{cases}
  \]
  where, by abuse of notation, $F_{\epsilon}\cap\{h\le\delta\}$ denotes the image under $\gamma$ of the
  corresponding parameter set.
  Then $\mathcal C_{\delta}$ is a continuum contained in $\{h\le\delta\}$, joining $\partial\disk$ to
  $\{h=\delta\}$, meeting $\gamma([0,L])$ only in $\gamma(F_{\epsilon})$, and of angular width at most
  \[
    \zeta_{\delta}\ :=\ \frac{2\delta}{\sin\epsilon}\ \le\ \tfrac15\,;
  \]
  and every connected arc of $\gamma\setminus F_{\epsilon}$ contained in $\{h\le\delta\}$ has angular
  oscillation at most $2\pi+\zeta_{\delta}$.
\end{lemma}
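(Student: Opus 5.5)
I would first establish the properties of $\mathcal C_{\delta}$ in the two cases, and then derive the oscillation bound for arcs of $\gamma\setminus F_{\epsilon}$ from a planar separation argument in the thin annulus $\{h\le\delta\}$.

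\emph{Structure of $\mathcal C_{\delta}$.} In the case $D>\delta$ the set is the subsegment of the ray of $\theta_{0}$ between $\hat q^{*}$ and depth $\delta$. It lies in $(q^{*},\hat q^{*}]$, because $h(q^{*})=D>\delta$, so by \cref{prop:uncovered} it misses the trace altogether. It is a segment joining $\partial\disk$ to $\{h=\delta\}$, and its angular width is zero.

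In the case $D\le\delta$ the depth is strictly monotone on $F_{\epsilon}$, so $F_{\epsilon}\cap\{h\le\delta\}$ is a subinterval containing $s^{*}$. Its image is therefore a continuum through $q^{*}$, and it reaches depth $\delta$ because $\Delta=\max_{F_{\epsilon}}h\ge\delta$. Adjoining the segment $[q^{*},\hat q^{*}]$, which meets the trace only at $q^{*}$ by \cref{prop:uncovered}, gives a continuum from $\partial\disk$ to $\{h=\delta\}$ inside $\{h\le\delta\}$ that meets the trace only in $\gamma(F_{\epsilon})$.

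For the width, the subinterval has length at most $\delta/\sin\epsilon$, since $\abs{h'}\ge\sin\epsilon$ on it and the depth range is at most $\delta$. On $\{h\le\tfrac1{10}\}$ one has $\abs{\varphi'}\le 1/(1-h)\le\tfrac{10}9$, so the angular variation is at most $\tfrac{10}{9}\delta/\sin\epsilon\le\zeta_{\delta}$. The bound $\zeta_{\delta}\le\tfrac15$ is exactly the hypothesis $\delta\le\tfrac1{10}\sin\epsilon$.

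\emph{Oscillation bound.} Let $A$ be a connected arc of $\gamma\setminus F_{\epsilon}$ in $\{h\le\delta\}$, with parameters taken on $\mathbb R$ and the lift $\varphi$ fixed. Suppose its angular oscillation exceeds $2\pi+\zeta_{\delta}$. The barrier $\mathcal C_{\delta}$ has angles in an interval $[\theta_{1},\theta_{1}+\zeta_{\delta}]$ modulo $2\pi$. An interval of angles of length greater than $2\pi+\zeta_{\delta}$ contains a full translate of $[\theta_{1},\theta_{1}+\zeta_{\delta}]+2\pi k$ strictly inside it, with room on both sides.

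Work in the universal cover of the annulus $\{0\le h\le\delta\}$, with coordinates $(\varphi,h)$, which is a strip $\mathbb R\times[0,\delta]$. The lift of $\mathcal C_{\delta}$ in the $k$-th sheet is a continuum joining the two boundary lines $h=0$ and $h=\delta$, contained in the vertical band $[\theta_{1},\theta_{1}+\zeta_{\delta}]+2\pi k$. Such a continuum separates the strip into a left part and a right part: any point left of the band and any point right of it lie in different components of the complement. This is the standard fact that a continuum meeting both boundary lines of a strip separates far-left from far-right, which one can see via the Jordan curve theorem on the compactified strip, or via Janiszewski's theorem.

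The lifted arc $A$ is connected and has points strictly left and strictly right of this band, so it must meet the lifted barrier. It cannot meet the ray part, which misses the trace except at $q^{*}\in\gamma(F_{\epsilon})$. It also cannot meet $\gamma(F_{\epsilon})$: by injectivity of $\gamma$, a point of $A$ there would have a parameter in $F_{\epsilon}$, whereas $A\cap F_{\epsilon}=\emptyset$. This contradiction gives the bound.

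\emph{Main obstacle.} The delicate step is the separation claim in the lifted strip. I would prove it by adjoining to the lifted barrier the two boundary rays $\{h=0,\varphi\ge\theta_{1}+\zeta_{\delta}+2\pi k\}$ and $\{h=\delta,\dots\}$ together with their counterparts on the left. I would then use Janiszewski's theorem: two compact sets whose intersection is connected, neither separating two given points, have a union that does not separate them either. An alternative is to argue directly that a path from left to right avoiding the barrier, combined with the barrier, would yield a crossing contradiction in the simply connected strip.

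One must also check that points of the barrier at $h=\delta$ or on $\partial\disk$ create no leakage. This is harmless because $A$ lies in the closed strip and the barrier touches both boundary lines.
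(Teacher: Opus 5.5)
Your proof is correct and takes essentially the paper's route: the same two-case description of the barrier, the same width bound from $\abs{h'}\ge\sin\epsilon$ and $\abs{\varphi'}\le\tfrac{10}9$, and the same contradiction obtained by lifting to the universal-cover strip of the annulus $\{h\le\delta\}$ and invoking planar separation. The only difference is packaging. The paper uses first and last hitting times to extract a sub-path crossing the rectangle $[A,B]\times[1-\delta,1]$ between its vertical edges, applies the rectangle-crossing (Hex) theorem, and treats the radial-segment case $A=B$ separately. Your separation claim for the closed strip, which is equivalent to that crossing theorem, handles both cases at once.
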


\begin{proof}
  \emph{The barrier.}
  Suppose first $D\le\delta$.
  The depth is monotone along $F_{\epsilon}$, so the parameter set $\{h\le\delta\}\cap F_{\epsilon}$ is a
  compact subinterval containing the shallow endpoint of $F_{\epsilon}$ and, since $h(s^{*})=D\le\delta$, the
  fold $s^{*}$; as $\delta\le\Delta=\max_{F_{\epsilon}}h$, its deep endpoint has depth exactly $\delta$, by
  the intermediate value theorem.
  Its image is a continuum through $q^{*}$; the segment $[q^{*},\hat q^{*}]$ is a continuum through $q^{*}$
  with depths in $[0,D]\subset[0,\delta]$; their union $\mathcal C_{\delta}$ is a continuum in
  $\{h\le\delta\}$ meeting $\partial\disk$ at $\hat q^{*}$ and $\{h=\delta\}$ at the deep endpoint.
  If instead $D>\delta$, then $\mathcal C_{\delta}$ is the outer piece of the segment, a radial segment with
  depths ranging over $[0,\delta]$, again a continuum joining $\partial\disk$ to $\{h=\delta\}$.

  \emph{The barrier meets the trace only in the fold.}
  If $D\le\delta$: by \cref{prop:uncovered} the segment $(q^{*},\hat q^{*}]$ misses the trace.
  If $D>\delta$: every point of $\mathcal C_{\delta}$ has radius at least $1-\delta>1-D=\abs{q^{*}}$, so lies
  in $(q^{*},\hat q^{*}]$ and misses the trace by \cref{prop:uncovered}.

  \emph{The width.}
  The segment projects to the single angle $\theta_{0}$.
  The parameter set $F_{\epsilon}\cap\{h\le\delta\}$ has length at most $\delta/\sin\epsilon$, the depth
  being monotone along it with speed at least $\sin\epsilon$ and range at most $\delta$; and on it
  $\abs{\varphi'}\le(1-\delta)^{-1}\le\tfrac{10}9$ by \cref{eq:framerelations}.
  Hence the width is at most $\tfrac{10}9\,\delta/\sin\epsilon\le\zeta_{\delta}$, and
  $\delta\le\tfrac1{10}\sin\epsilon$ gives $\zeta_{\delta}\le\tfrac15$.

  \emph{Blocking.}
  The set $\bar A_{\delta}:=\{h\le\delta\}=\{1-\delta\le\abs z\le1\}$ is a closed annulus about the origin;
  identify its universal cover with the strip $\mathbb{R}\times[1-\delta,1]$ through the polar coordinates
  $(\varphi,r)$.
  Since $\zeta_{\delta}<\pi$, the barrier lies in a closed sector of angular width less than $2\pi$, which is
  evenly covered by the projection \cite[\S1.3]{Hatcher2002}; so the preimage of $\mathcal C_{\delta}$ is the
  disjoint union of translates $\mathcal C^{0}+2\pi n$, $n\in\mathbb{Z}$, each a continuum joining the two
  edges of the strip, with angular projection an interval
  $[\alpha+2\pi n,\alpha'+2\pi n]$ of length at most $\zeta_{\delta}$.
  Let now $\Gamma$ be a connected arc of $\gamma\setminus F_{\epsilon}$ contained in $\bar A_{\delta}$.
  Since $\gamma$ is injective, $\Gamma$ is disjoint from $\gamma(F_{\epsilon})$, hence from
  $\mathcal C_{\delta}$, and its lift, given by the global lift of $\varphi$, is a path in the strip avoiding
  every translate.
  Suppose its angular oscillation exceeded $2\pi+\zeta_{\delta}$: it would contain parameters $p,q$ with
  $\varphi(q)-\varphi(p)>2\pi+\zeta_{\delta}$, and since the interval
  $\bigl(\varphi(p),\varphi(q)-\zeta_{\delta}\bigr)$ has length exceeding $2\pi$ it contains
  $A:=\alpha+2\pi n$ for some $n$; let $B:=\alpha'+2\pi n$ be the right endpoint of that translate's angular
  projection, so that $B\le A+\zeta_{\delta}<\varphi(q)$.
  If $B=A$, which is the case whenever the barrier is a radial segment, the translate is a continuum in the
  vertical segment $\{A\}\times[1-\delta,1]$ joining its two ends, hence the whole segment; since
  $\varphi(p)<A<\varphi(q)$, the lifted arc crosses the line $\{\varphi=A\}$, and it does so inside the strip:
  a contradiction.
  If $A<B$, travel along the lifted arc from $p$ towards $q$ and let $t_{2}$ be the first parameter with
  $\varphi(t_{2})=B$ and $t_{1}$ the last parameter before $t_{2}$ with $\varphi(t_{1})=A$; on $[t_{1},t_{2}]$
  the lifted arc is a path in the rectangle $[A,B]\times[1-\delta,1]$ joining its two vertical edges, while
  the translate $\mathcal C^{0}+2\pi n$ is a continuum in the same rectangle joining its two horizontal
  edges, and the two are disjoint.
  But a continuum joining the two horizontal edges of a rectangle meets every path joining its two vertical
  edges (see \cite[Ch.~V]{Newman1951}, or \cite{Gale1979} via the Hex theorem): a contradiction.
\end{proof}

\subsubsection*{The passage count}

\begin{lemma}
  \label{lem:separation}
  Let $\delta$ be as in \cref{lem:blocking}, let $T:=2\quartrad\,\delta^{1/4}$, and let
  $\sigma_{1},\dots,\sigma_{M}$ be a maximal $T$-separated subset of $\{h\ge\delta\}$; such a subset exists
  and is finite, a $T$-separated subset of $\mathbb{R}/L\mathbb{Z}$ having at most $L/T$ elements, and it is
  nonempty because $\Delta\ge\delta$ is attained on $F_{\epsilon}$.
  Then, for $L\ge L_{0}(\bootstrap)$,
  \begin{equation}
    \label{eq:countmaster}
    M\ \ge\ \frac{L-C\max(\excess,1)}{2\pi+\zeta_{\delta}+2T}\ \ge\ \frac{L}{14}\,;
  \end{equation}
  and if moreover $\zeta_{\delta}\le T$ and $L^{-1/2}\le T\le1$, then
  $M\ge\dfrac{L}{2\pi}\bigl(1-CT\bigr)$.
\end{lemma}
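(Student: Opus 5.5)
The idea is to cover the circle of parameters by the $T$-neighbourhoods of the $\sigma_{j}$ together with the arcs of $\{h<\delta\}$ between them, and to bound the length of each piece. By maximality of the separated set, every parameter with $h\ge\delta$ lies within distance $T$ of some $\sigma_{j}$. Let $U:=\bigcup_{j}[\sigma_{j}-T,\sigma_{j}+T]$, of total length at most $2TM$. Every component of $(\mathbb{R}/L\mathbb{Z})\setminus U$ lies in $\{h<\delta\}$. We group these components so that there are at most $M$ gaps, namely the arcs between consecutive $\sigma_{j}$'s with their $T$-neighbourhoods removed. If $M=1$ there is a single gap, cut at the single point.

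The key step is to show that each gap has length at most $(2\pi+\zeta_{\delta})+(\text{waste})$, the waste being charged to the excess. A gap $\Gamma$ is a connected arc inside $\{h\le\delta\}$. It also avoids $F_{\epsilon}$, because $F_{\epsilon}\cap\{h\le\delta\}$ is an interval ending at a point of depth $\delta$. If some gap contained part of $F_{\epsilon}$ below level $\delta$, we split it there. This adds at most one extra gap, plus a piece of length at most $\delta/\sin\epsilon\le1$. By \cref{lem:blocking}, the angular oscillation of $\Gamma$ is at most $2\pi+\zeta_{\delta}$.

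To turn angle into length, use the good set $G=\{h\le\tfrac1{10},\abs{h'}\le\tfrac1{10}\}$. On $G$ we have $\abs{\varphi'}\ge\tfrac9{10}$, and off $G$ the length is at most $C\excess$, as in the proof of \cref{lem:env}. This crude bound only gives a factor $\tfrac{10}9$, which suffices for $L/14$ but not for the sharp form.

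For the sharp form, use that $\varphi'$ has one sign along a gap except on $Z$, and on $Z$ it is small. Since $\varphi'=\sqrt{1-(h')^{2}}/(1-h)\ge1-(h')^{2}-$ on the positive part, we get $\int_{\Gamma}\abs{\varphi'}\ge\abs{\Gamma}-\int_{\Gamma}\bigl((h')^{2}+h\bigr)$. The net angle then equals $\int\abs{\varphi'}$ minus twice the backward sweep. The backward sweep is controlled because backward-moving portions lie in sign-constant arcs $\Pi_{k}$ of \cref{cor:arcs}, of which there are at most $C\max(\excess,1)$.

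The main obstacle is this step: bounding the total backward sweep inside gaps by $C\max(\excess,1)$ uniformly over all gaps, not per gap. I would cut the gaps additionally at the $O(\max(\excess,1))$ boundary points of the $\Pi_{k}$. Inside each resulting subarc, $\varphi'$ has constant sign off $Z$, and $Z$ has measure $C\excess$. So each subarc has length at most its oscillation plus its share of $\int\bigl((h')^{2}+h\bigr)$, $\abs Z$ and $\abs{\{h>\tfrac1{10}\}}$. All of these sum to $C\excess$ by \cref{cor:hprime,cor:localbudgets,cor:arcs}.

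Summing over the at most $M+C\max(\excess,1)$ pieces, with oscillation at most $2\pi+\zeta_{\delta}$ each, plus $2TM$ for $U$, gives
\[
L\le M\bigl(2\pi+\zeta_{\delta}+2T\bigr)+C\max(\excess,1)\bigl(2\pi+\zeta_{\delta}+1\bigr).
\]
This is the first inequality of \eqref{eq:countmaster}.

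With $\zeta_{\delta}\le\tfrac15$ and $T\le1$ (note $\delta\le\tfrac1{10}$ so $T\le 2\quartrad\cdot 10^{-1/4}<3$, giving denominator $<14$ after absorbing), and with $\excess\le\bootstrap L^{4/9}=o(L)$, we get $M\ge L/14$ for $L\ge L_{0}$.

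Under $\zeta_{\delta}\le T$, the denominator is $2\pi(1+CT)$. The numerator is $L(1-C\bootstrap L^{-5/9})$, and $L^{-5/9}\le L^{-1/2}\le T$. Together these give $M\ge\frac{L}{2\pi}(1-CT)$.
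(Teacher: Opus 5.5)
Your proposal is correct and is essentially the paper's proof. It uses the same decomposition into $T$-collars and gaps in $\{h<\delta\}$, removes $F_{\epsilon}\cap\{h<\delta\}$, cuts at the block endpoints of \cref{cor:arcs}, and bounds each piece by the oscillation from \cref{lem:blocking} plus $4\abs{Q\cap Z}+\int_{Q}(h')^{2}$; the only difference is that you bound lengths piece by piece instead of comparing $\sum_{j}\int_{P_{j}}\abs{\varphi'}$ from above and below. Two small slips do not affect the argument: gaps need not avoid $F_{\epsilon}$, though your subsequent splitting handles this, and $T\le2\quartrad\,10^{-1/4}\approx3.28$ rather than $<3$, which still leaves the denominator below $14$.
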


\begin{proof}
  By maximality, every point of $\{h\ge\delta\}$ lies within distance $T$ of some $\sigma_{k}$, so the
  interiors $P_{j}:=(\sigma_{j}+T,\sigma_{j+1}-T)$, $j$ cyclic and $P_{j}$ possibly empty, are contained in
  $\{h<\delta\}$, and $\sum_{j}\abs{P_{j}}\ge L-2TM$.

  \emph{Lower bound for the angular variation over the interiors.}
  On $\{h<\delta\}$, \cref{eq:framerelations} gives
  $\abs{\varphi'}=\sqrt{1-(h')^{2}}/(1-h)\ge1-(h')^{2}$, so by \cref{cor:hprime}
  \begin{equation}
    \label{eq:countlower}
    \sum_{j}\int_{P_{j}}\abs{\varphi'}\,ds\ \ge\ \sum_{j}\abs{P_{j}}-\int_{0}^{L}(h')^{2}\,ds
    \ \ge\ L-2TM-6\excess .
  \end{equation}

  \emph{Upper bound.}
  Let $W:=F_{\epsilon}\cap\{h<\delta\}$, a single parameter interval of length at most $\delta/\sin\epsilon$,
  and let $\Pi_{1},\dots,\Pi_{N}$, $N\le C\max(\excess,1)$, be the partition of \cref{cor:arcs}\,(i),
  so that $\sgn\sweepdens=\sgn\varphi'$ is constant off
  $Z:=\{\abs{\sweepdens}<\tfrac12\}$ on each $\Pi_{k}$, with $\abs Z\le C\excess$.
  Cut the disjoint intervals $P_{j}\setminus W$ --- at most $M+2$ intervals in all, the removal of the single
  interval $W$ creating at most two new components --- at the $N$ endpoints of the blocks; this produces at
  most $M+2+C\max(\excess,1)$ pieces $Q$, each a parameter interval disjoint from $F_{\epsilon}$, contained
  in $\{h<\delta\}$ and in a single block.
  For each such piece, \cref{lem:blocking} bounds the oscillation, the sign of $\varphi'$ is constant off
  $Z$, and $\abs{\varphi'}\le2$ on $Z\cap\{h\le\tfrac12\}$ by \cref{cor:arcs}\,(ii); hence
  \[
    \int_{Q}\abs{\varphi'}\,ds
    \ \le\ \Bigl|\int_{Q}\varphi'\,ds\Bigr|+2\int_{Q\cap Z}\abs{\varphi'}\,ds
    \ \le\ \operatorname*{osc}_{Q}\varphi+4\abs{Q\cap Z}
    \ \le\ 2\pi+\zeta_{\delta}+4\abs{Q\cap Z} .
  \]
  On $W$ itself, $\abs{\varphi'}\le\tfrac{10}9$ and $\abs W\le\delta/\sin\epsilon$, so
  $\int_{\bigcup_{j}P_{j}\cap W}\abs{\varphi'}\le2\delta/\sin\epsilon=\zeta_{\delta}$.
  Summing over the pieces,
  \begin{equation}
    \label{eq:countupper}
    \begin{aligned}
      \sum_{j}\int_{P_{j}}\abs{\varphi'}\,ds
      &\ \le\ \bigl(M+2+C\max(\excess,1)\bigr)\bigl(2\pi+\zeta_{\delta}\bigr)+4\abs{Z}+\zeta_{\delta}\\
      &\ \le\ M\bigl(2\pi+\zeta_{\delta}\bigr)+C\max(\excess,1) .
    \end{aligned}
  \end{equation}

  Comparing \cref{eq:countlower,eq:countupper} gives
  $M(2\pi+\zeta_{\delta}+2T)\ge L-C\max(\excess,1)$, which is the first inequality of
  \cref{eq:countmaster}; the second follows from $\zeta_{\delta}\le\tfrac15$,
  $T\le2\quartrad\cdot10^{-1/4}\le\tfrac{33}{10}$ and \cref{eq:bootstrap}, for $L\ge L_{0}(\bootstrap)$.
  Under the additional hypotheses,
  \[
    M\ \ge\ \frac{L-C\max(\excess,1)}{2\pi+3T}
    \ \ge\ \frac{L}{2\pi}\Bigl(1-\frac{3T}{2\pi}\Bigr)\Bigl(1-C\,\frac{\max(\excess,1)}{L}\Bigr)
    \ \ge\ \frac{L}{2\pi}\bigl(1-CT\bigr),
  \]
  since $\max(\excess,1)/L\le\bootstrap L^{-5/9}\le\bootstrap\,T L^{-1/18}\le CT$ by \cref{eq:bootstrap} and
  $T\ge L^{-1/2}$.
\end{proof}

\subsubsection*{A priori bounds on the fold depth}

\begin{lemma}
  \label{lem:shallowfold}
  For $L\ge L_{0}(\bootstrap)$,
  \[
    \Delta\ \le\ C(\bootstrap)\,L^{-5/18} .
  \]
  In particular $\Delta<\tfrac1{10}\sin\epsilon$, so the level $\delta=\Delta$ is admissible in
  \cref{lem:blocking,lem:separation}, and $\Delta\le\tfrac12$, so \cref{prop:head} applies.
\end{lemma}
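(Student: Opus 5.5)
The plan is to argue by contradiction, running the blocking and counting machinery of \cref{lem:blocking,lem:separation} at an auxiliary level $\delta_{1}:=C_{1}L^{-5/18}$. The constant $C_{1}=C_{1}(\bootstrap)$ will be fixed at the end. We then price the passages forced below that level by the crudest available means, the depth budget $\int h\le\excess$ alone. The exponent $5/18$ is exactly what this crude pricing returns: $M\asymp L$ passages of depth $\delta$, each costing at least $\delta^{2}$ in $\int h$, balanced against $\excess\le\bootstrap L^{4/9}$, give $\delta^{2}\lesssim L^{-5/9}$.

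\emph{Step 1: admissibility of the level.} Suppose $\Delta>\delta_{1}$. Since $\epsilon=L^{-8/45}$ and $\tfrac{5}{18}>\tfrac{8}{45}$, we have $\delta_{1}\le\tfrac1{10}\sin\epsilon$ for $L\ge L_{0}(\bootstrap,C_{1})$. Together with $\delta_{1}<\Delta$, this makes $\delta:=\delta_{1}$ admissible in \cref{lem:blocking}. \Cref{lem:separation} then produces a maximal $T$-separated set $\sigma_{1},\dots,\sigma_{M}$ in $\{h\ge\delta_{1}\}$, with $T=2\quartrad\delta_{1}^{1/4}$ and $M\ge L/14$ by \cref{eq:countmaster}. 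Only this first, crude inequality of \cref{lem:separation} is used; it needs no hypothesis on $\zeta_{\delta}$ beyond $\zeta_{\delta}\le\tfrac15$.

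\emph{Step 2: pricing each passage by depth alone.} Around each $\sigma_{k}$ take the parameter interval $[\sigma_{k}-\tfrac{\delta_{1}}2,\sigma_{k}+\tfrac{\delta_{1}}2]$. These intervals are pairwise disjoint, because $T=2\quartrad\delta_{1}^{1/4}\ge\delta_{1}$ for small $\delta_{1}$. Since $h$ is $1$-Lipschitz and $h(\sigma_{k})\ge\delta_{1}$, we have $h\ge\delta_{1}/2$ on each interval, so each contributes at least $\delta_{1}^{2}/2$ to $\int_{0}^{L}h$. By \cref{eq:B1,eq:depthbound} and \cref{eq:bootstrap},
\[
  \frac{L}{14}\cdot\frac{\delta_{1}^{2}}{2}\ \le\ \frac{M\delta_{1}^{2}}{2}\ \le\ \int_{0}^{L}h\,ds\ \le\ \excess\ \le\ \bootstrap L^{4/9},
\]
that is, $C_{1}^{2}\le28\bootstrap$. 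Choosing $C_{1}^{2}>28\bootstrap$ gives the contradiction. Hence $\Delta\le C_{1}L^{-5/18}$.

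\emph{Step 3: the consequences.} The bound $\Delta\le C_{1}L^{-5/18}$ is $o(\sin\epsilon)$ by the same comparison of exponents, so $\Delta<\tfrac1{10}\sin\epsilon$ and trivially $\Delta\le\tfrac12$ for $L$ large. The level $\delta=\Delta$ is therefore admissible in \cref{lem:blocking,lem:separation}, and \cref{prop:head} applies.

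I expect no serious obstacle. The only points needing care are the following. First, $L_{0}$ must be allowed to depend on $C_{1}$, hence on $\bootstrap$, both in the admissibility check and in the count \cref{eq:countmaster}. Second, the $T$-separation of the $\sigma_{k}$ is in the cyclic parameter, which is exactly what makes the short intervals around them disjoint. Third, at this stage one must use only the crude count $M\ge L/14$ and not the sharp one, whose extra hypothesis $\zeta_{\delta}\le T$ is not yet available.
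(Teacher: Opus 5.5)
Your proof is correct and is essentially the paper's argument. The paper runs the same crude count $M\ge L/14$ and the same depth budget $\int h\le\excess$ directly at the level $\delta=\min(\Delta,\tfrac1{10}\sin\epsilon)$, which gives $\delta\le(28\bootstrap)^{1/2}L^{-5/18}$, and then notes that the minimum cannot be attained at $\tfrac1{10}\sin\epsilon$; you instead fix the level $C_{1}L^{-5/18}$ and argue by contradiction, which is only a cosmetic difference.
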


\begin{proof}
  Let $\delta:=\min\bigl(\Delta,\tfrac1{10}\sin\epsilon\bigr)$, an admissible level, and let
  $\sigma_{1},\dots,\sigma_{M}$ be as in \cref{lem:separation}, so that $M\ge L/14$.
  The separation $T=2\quartrad\delta^{1/4}$ exceeds $\delta$, since $\delta\le1$ and $2\quartrad>1$; the depth
  is $1$-Lipschitz with $h(\sigma_{j})\ge\delta$, so $h\ge\delta/2$ on the pairwise disjoint intervals
  $[\sigma_{j}-\tfrac\delta2,\sigma_{j}+\tfrac\delta2]$, and \cref{eq:B1,eq:depthbound} give
  \[
    \excess\ \ge\ \int_{0}^{L}h\,ds\ \ge\ M\,\frac{\delta^{2}}{2}\ \ge\ \frac{L\,\delta^{2}}{28},
    \qquad\text{whence}\qquad
    \delta\ \le\ \bigl(28\,\bootstrap\bigr)^{1/2}L^{-5/18}
  \]
  by \cref{eq:bootstrap}.
  Since $\tfrac1{10}\sin\epsilon\ge\tfrac1{20}\epsilon=\tfrac1{20}L^{-8/45}$ and $\tfrac8{45}<\tfrac5{18}$,
  for $L\ge L_{0}(\bootstrap)$ the right-hand side is smaller than $\tfrac1{10}\sin\epsilon$; the minimum is
  therefore not attained at $\tfrac1{10}\sin\epsilon$, so $\delta=\Delta$, and the bound follows.
\end{proof}

The choice $\delta=\Delta$ is made from now on: $T=2\quartrad\Delta^{1/4}$,
$\zeta:=\zeta_{\Delta}=2\Delta/\sin\epsilon$, and $\sigma_{1},\dots,\sigma_{M}$ denote a maximal
$T$-separated subset of $\{h\ge\Delta\}$.
The hypotheses of the sharp count in \cref{lem:separation} will be verified in the proof of
\cref{thm:lowersharp}, where the bootstrap also supplies $cL^{-4/9}\le\Delta\le CL^{-4/9}$.

\subsubsection*{The dip, with the sharp constant}

\begin{lemma}
  \label{lem:dipsharp}
  Let $I$ be an interval, $u\in W^{2,2}(I)$ with $u\ge0$, and $s_{0}\in I$ with $u(s_{0})\ge A$ and
  $\dist(s_{0},\partial I)\ge\quartrad A^{1/4}$.
  Then
  \[
    \int_{I}\Bigl(2u+(u'')^{2}\Bigr)dx\ \ge\ \dipconst\,A^{5/4}.
  \]
  No condition is imposed on $u$ at $\partial I$.
\end{lemma}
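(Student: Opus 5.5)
The plan is to reduce to $A=1$ by scaling and then prove the inequality by a calibration against the quartic $v$ of \cref{lem:quartic}, integrating by parts twice.

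\emph{Step 1: reduction to $A=1$.} Set $x=s_{0}+A^{1/4}y$ and $u=A\,w(y)$. Then $\int 2u\,dx=A^{5/4}\int 2w\,dy$ and $\int (u'')^{2}dx=A^{2}A^{-1}A^{1/4}\int (w'')^{2}dy=A^{5/4}\int (w'')^{2}dy$. So it suffices to treat $A=1$ and $s_{0}=0$, with $[-\quartrad,\quartrad]\subset I$. The integrand is nonnegative, so I will discard $I\setminus[-\quartrad,\quartrad]$ and work on $J:=[-\quartrad,\quartrad]$. This is exactly why no boundary condition on $\partial I$ is needed.

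\emph{Step 2: calibration.} Write $u=v+\phi$ on $J$. Since $\int(\phi'')^{2}\ge0$,
\[
  \int_{J}\bigl(2u+(u'')^{2}\bigr)-\dipconst\ \ge\ \int_{J}\bigl(2\phi+2v''\phi''\bigr).
\]
On each half of $J$ one has $v''''=-1$, because $v=\alpha t^{3}-t^{4}/24$ with $t=\quartrad-\abs x$. Moreover $v''$ is continuous on $J$ and vanishes at $\pm\quartrad$, while $v'''$ jumps at $0$ and equals $\pm\quartrad/3$ in absolute value at the edges. I will integrate $\int v''\phi''$ by parts twice on $[-\quartrad,0]$ and on $[0,\quartrad]$ separately. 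The interior terms cancel against $\int 2\phi$. The terms $v''\phi'$ at $0$ cancel between the halves, since $\phi\in W^{2,2}$ has a continuous derivative. The terms $v''\phi'$ at $\pm\quartrad$ vanish. What remains are edge terms of the form $\tfrac{2\quartrad}{3}\phi(\pm\quartrad)$, which are nonnegative because $\phi(\pm\quartrad)=u(\pm\quartrad)\ge0$; this is where $u\ge0$ enters. There is also a single term at the origin, equal to $-\tfrac{8\quartrad}{3}\phi(0)$, coming from the jump of $v'''$. When $u(0)=1$ exactly this term vanishes and the proof is complete.

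\emph{Step 3, the main obstacle: $u(0)>1$.} Here $\phi(0)>0$, and the atom of the multiplier at the origin carries the wrong sign for the crude bound. My first attempt will be to compare with the calibrating profile at the true height $A'=u(0)$, that is $v_{A'}(x)=A'v(xA'^{-1/4})$, on the largest admissible window, and to run the same integration by parts there. The goal is to show that shrinking the window costs less than the gain $\dipconst(A'^{5/4}-1)$. The fallback is to keep the discarded term $\int(\phi'')^{2}$ and absorb $\tfrac{8\quartrad}{3}\phi(0)$ into it together with the positive edge terms. This should use the elementary interpolation bound $\phi(0)\le\min(\phi(\pm\quartrad))+C\norm{\phi''}_{L^{2}}$, valid whenever $\phi'(0)$ has the right sign on one side; the sign of $\phi'(0)$ is at my disposal after reflecting $x\mapsto -x$. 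If neither route closes, I will argue instead through a minimiser. The class of admissible $u$ on $J$ is weakly closed in $W^{2,2}$, so a minimiser exists by the direct method. I will then show that any minimiser has $u(0)=1$: if $u(0)>1$, lowering $u$ near $0$ by a small smooth bump keeps $u\ge0$ and decreases $\int 2u$ at first order, while the change in $\int (u'')^{2}$ is only first order in the Euler--Lagrange multiplier, which vanishes because the constraint $u(0)\ge1$ is inactive. Step 2 applied to this minimiser then gives the bound.
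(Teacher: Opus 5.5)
Your route is the paper's calibration, but Step~2 has a sign error, and the obstacle of Step~3 comes from it. With $t=\quartrad-\abs{x}$ and $\alpha=\quartrad/18$ one has $v'''=t-6\alpha$ for $x>0$ and $v'''=6\alpha-t$ for $x<0$. Hence $v'''(0^{\pm})=\pm\tfrac23\quartrad$, and the jump at the peak is $+\tfrac43\quartrad$, the same sign as the jumps $+\tfrac13\quartrad$ at the edges. Your two integrations by parts then give
\[
  \int_{J}\bigl(2\phi+2v''\phi''\bigr)\,dx
  \;=\;\tfrac83\quartrad\,\phi(0)+\tfrac23\quartrad\bigl(\phi(\quartrad)+\phi(-\quartrad)\bigr).
\]
Since $\phi(0)=u(0)-1\ge0$, the hypothesis $u(s_{0})\ge A$ enters through a nonnegative term, just as $u\ge0$ does at the edges. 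So Step~2 already proves the lemma for every $u(0)\ge1$. It is then exactly the paper's proof, which writes $w''''$ as $-1$ on the support plus three positive atoms, $\tfrac43\quartrad A^{1/4}$ at $s_{0}$ and $\tfrac13\quartrad A^{1/4}$ at each edge, and keeps $2\int_{I\setminus\operatorname{supp}w}u\ge0$ instead of discarding it.

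You could have caught the sign without computing. Suppose the atom at the peak had a weight $c<0$, and take a small bump $\eta\ge0$ supported in $(-\quartrad,\quartrad)$. Then $v+t\eta$ satisfies every hypothesis, and
\[
\int\bigl(2(v+t\eta)+((v+t\eta)'')^{2}\bigr)=\dipconst+2tc\,\eta(0)+O(t^{2})<\dipconst
\]
for small $t$, so the statement you are proving would be false.

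As written, though, Step~3 does not close the case $u(0)>1$.

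\emph{Comparison with $v_{A'}$.} This is not carried out, and the window of half-width $\quartrad A'^{1/4}$ need not fit inside $I$.

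\emph{Absorbing $-\tfrac83\quartrad\phi(0)$ into $\int(\phi'')^{2}$.} This cannot work: a term linear in $\phi$ with negative coefficient is not controlled by a quadratic one when $\phi$ is small. That is exactly the bump computation above.

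\emph{The minimiser route.} Existence is fine, but the argument that a minimiser has $u(0)=1$ is wrong. If the point constraint is slack, the Euler--Lagrange equation gives $u''''=-1$ near $0$. Lowering $u$ by $t\eta$ then changes $\int(u'')^{2}$ by $-2t\int u''\eta''+O(t^{2})=2t\int\eta+O(t^{2})$. This exactly cancels the first-order gain in $\int 2u$, so the first variation vanishes rather than being negative.

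A correct argument is by scaling. The function $u/u(0)$ is admissible, and with $F(u):=\int_{J}(2u+(u'')^{2})$ one has $F(u/u(0))=\tfrac{2}{u(0)}\int u+\tfrac{1}{u(0)^{2}}\int(u'')^{2}<F(u)$ when $u(0)>1$. With that repair the minimiser route would close even with your sign, because Step~2 is then needed only where $\phi(0)=0$. Once the sign is corrected, however, Step~3 is simply unnecessary.
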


\begin{proof}
  Let $w$ be the quartic profile centred at $s_{0}$ with peak $A$: with $t=\quartrad A^{1/4}-\abs{x-s_{0}}$,
  $w=\alpha t^{3}-t^{4}/24$ and $\alpha=\quartrad A^{1/4}/18$, so that
  $\operatorname{supp}w=[s_{0}-\quartrad A^{1/4},s_{0}+\quartrad A^{1/4}]=:[s_{-},s_{+}]\subset I$,
  $w''''=-1$ on the interior of its support, and $F(w)=\dipconst A^{5/4}$, where
  $F(u):=\int_{I}(2u+(u'')^{2})$.
  By convexity,
  \[
    F(u)-F(w)=\int 2(u-w)+\int(u''-w'')^{2}+2\int w''(u-w)''
  \]
  \[
    \ \ge\ \int 2(u-w)+2\int w''(u-w)'' .
  \]
  Two integrations by parts give $\int w''(u-w)''=\int w''''(u-w)$ in the sense of distributions: the first
  boundary term carries $w''$, which vanishes at $s_{\pm}$ and beyond, and the second carries $w'''$, which
  vanishes off the support; when $s_{\pm}$ coincides with an endpoint of $I$ the corresponding boundary
  contribution equals the edge atom below and the identity is unchanged.
  Here
  \[
    w''''=-\mathbb 1_{\operatorname{supp}w}
      +\tfrac43\quartrad A^{1/4}\,\delta_{s_{0}}
      +\tfrac13\quartrad A^{1/4}\bigl(\delta_{s_{-}}+\delta_{s_{+}}\bigr),
  \]
  the atoms arising because $w'''$ jumps at the peak and at the two edges of the support, by
  $\tfrac43\quartrad A^{1/4}$ and $\tfrac13\quartrad A^{1/4}$ respectively; all three are positive.
  Hence
  \[
    F(u)-F(w)\ \ge\ 2\int_{I\setminus\operatorname{supp}w}u
    +\tfrac83\quartrad A^{1/4}\bigl(u(s_{0})-A\bigr)
    +\tfrac23\quartrad A^{1/4}\bigl(u(s_{-})+u(s_{+})\bigr),
  \]
  every term being nonnegative, the last because $w(s_{\pm})=0$ and $u\ge0$. \qedhere
\end{proof}

\begin{remark}
  \label{rem:freeends}
  The hypothesis on $\dist(s_{0},\partial I)$ cannot be dropped: half of the profile is admissible if the
  peak is allowed to sit at $\partial I$, and the constant halves.
  In the application it costs nothing, the peaks being $\asymp2\pi$ apart while
  $\quartrad\Delta^{1/4}\asymp L^{-1/9}$.
  What matters is that \emph{no} condition is imposed at $\partial I$: a passage of the wire need not
  return to any prescribed baseline, and requiring it to do so is what would force the discarding of too
  many turns.
\end{remark}

\begin{corollary}
  \label{cor:dipsum}
  Let $\delta=\Delta$, admissible by \cref{lem:shallowfold}, let $\sigma_{1},\dots,\sigma_{M}$ be as in
  \cref{lem:separation}, and let $I_{j}$ be the intervals cut at the midpoints of consecutive $\sigma$'s.
  Then, for every $j$,
  \[
    \int_{I_{j}}\Bigl(2h+(h'')^{2}\Bigr)ds\ \ge\ \dipconst\,\Delta^{5/4} ;
  \]
  the $I_{j}$ being pairwise disjoint, the inequality may be summed over any subfamily.
\end{corollary}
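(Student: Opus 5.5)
The plan is to apply \cref{lem:dipsharp} on each $I_{j}$ with $u:=h$, $A:=\Delta$ and $s_{0}:=\sigma_{j}$. The functional is then exactly the one in the claim, and the only work is to check the three hypotheses of that lemma.

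\emph{Step 1: the peak condition.} By construction $\sigma_{j}\in\{h\ge\Delta\}$, so $u(s_{0})\ge A$. Nonnegativity $u\ge0$ is automatic for the depth.

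\emph{Step 2: distance to the boundary.} The endpoints of $I_{j}$ are the midpoints between $\sigma_{j}$ and its two neighbours $\sigma_{j\pm1}$. These are at parameter distance at least $T$ apart, with $T=2\quartrad\Delta^{1/4}$. Hence each midpoint lies at distance at least $T/2=\quartrad\Delta^{1/4}=\quartrad A^{1/4}$ from $\sigma_{j}$, which is exactly the required $\dist(s_{0},\partial I_{j})\ge\quartrad A^{1/4}$. Some care is needed with the cyclic indexing, when $M$ is small or the wrap-around interval is involved. Since $M\ge L/14$ by \cref{eq:countmaster} and distances are measured in $\mathbb{R}/L\mathbb{Z}$, the separation still holds for the wrap-around pair. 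The intervals $I_{j}$ then tile the circle with pairwise disjoint interiors.

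\emph{Step 3: regularity, the main obstacle.} \Cref{lem:dipsharp} needs $u\in W^{2,2}(I_{j})$. By \cref{rem:avoidorigin} the depth is $C^{1}\cap W^{2,2}$ on the whole parameter circle, because $0\notin\gamma$ under \cref{eq:avoidorigin}. So $h|_{I_{j}}\in W^{2,2}(I_{j})$, and the lemma applies with no condition at $\partial I_{j}$, which is precisely what that lemma permits. The integral $\int(h'')^{2}$ may be large where the curve goes deep, but that only helps.

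Note that the corollary, as stated, does not require $I_{j}\subset\{h\le\tfrac1{10}\}$. That restriction matters only later, when \cref{prop:conversion} converts charge into this functional, so it plays no role here. The inequality therefore follows for every $j$. Summation over any subfamily is immediate, since the $I_{j}$ are disjoint and both integrands $2h$ and $(h'')^{2}$ are nonnegative.
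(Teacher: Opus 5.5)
Your proposal is correct and matches the paper's proof: apply \cref{lem:dipsharp} with $u=h$, $A=\Delta$, $s_{0}=\sigma_{j}$, after checking $h\ge0$, $h(\sigma_{j})\ge\Delta$, $\dist(\sigma_{j},\partial I_{j})\ge T/2=\quartrad\Delta^{1/4}$, and $h\in W^{2,2}(I_{j})$ via \cref{rem:avoidorigin}. You do not need $M\ge L/14$ for the wrap-around pair, since $T$-separation in $\mathbb{R}/L\mathbb{Z}$ already bounds every cyclic gap between consecutive $\sigma$'s from below by $T$, but invoking it does no harm.
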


\begin{proof}
  Each window satisfies $\dist(\sigma_{j},\partial I_{j})\ge T/2=\quartrad\Delta^{1/4}$, the midpoints being
  at distance at least $T/2$ from the two peaks they separate; $h\in W^{2,2}(I_{j})$ with $h\ge0$ by
  \cref{rem:avoidorigin}, and $h(\sigma_{j})\ge\Delta$; apply \cref{lem:dipsharp} with $u=h$, $A=\Delta$,
  $s_{0}=\sigma_{j}$.
\end{proof}

\subsubsection*{The sharp lower bound}

\begin{theorem}
  \label{thm:lowersharp}
  There are $C$ and $L_{0}$ such that, for every $L\ge L_{0}$,
  \[
    \minen_{L}\ \ge\ L+\cstar\,L^{4/9}-C\,L^{1/3},
    \qquad
    \cstar:=\frac95\bigl(\dgconst^{2}\bigr)^{5/9}\Bigl(\frac{5\dipconst}{8\pi}\Bigr)^{4/9}
    =5.223049104\ldots
  \]
\end{theorem}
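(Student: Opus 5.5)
By \cref{rem:reduction,rem:avoidorigin} I will bound $\excess$ from below for $\gamma\in\emb_{L}^{\bootstrap}$ satisfying \cref{eq:avoidorigin}. Throughout, $\Delta$ is the depth of \cref{eq:Fdef} attached to the envelope fold $s^{*}$, and the level is fixed at $\delta=\Delta$, which \cref{lem:shallowfold} makes admissible.

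The plan is to price the fold and the dips on disjoint arcs and add the two prices with the superadditivity \cref{eq:additivity}.

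\emph{Fold arc.} The fold arc is $F_{\epsilon}$. Its endpoints carry $\abs{h'}=\sin\epsilon$, so \cref{eq:boundaryterm} gives $\abs{\bterm_{F_{\epsilon}}}\le2\epsilon$. Its length is at most $\Delta/\sin\epsilon$. Hence
\[
\widehat\excess_{F_{\epsilon}}\ge\energy_{F_{\epsilon}}-\abs{F_{\epsilon}}-4\epsilon,
\]
and \cref{prop:head} bounds this below by $\frac{\dgconst^{2}}{\Delta}(1-C\epsilon^{3/2}-C\Delta/\epsilon)-C\Delta/\epsilon-4\epsilon$.

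\emph{Dip arcs.} The dip arcs are the windows $I_{j}$ of \cref{cor:dipsum}. I discard those that meet $F_{\epsilon}$; since $\abs{F_{\epsilon}}\le\Delta/\sin\epsilon\ll T$, there are at most two of them. I also discard those that leave $\{h\le\tfrac1{10}\}$ or whose $h$ is large. By \cref{cor:localbudgets} and \cref{eq:B1}, the number of windows reaching depth $\ge\tfrac1{10}$ is at most $C\excess$, because each such window spends length $\gtrsim1$ at depth $\gtrsim1$. On each surviving window, \cref{prop:conversion} applies with $H\le\tfrac1{10}$. The cut points of these windows are not tangential, so I will instead cut each $I_{j}$ at a nearby point where $\abs{h'}$ is small. \Cref{cor:hprime} shows that most windows contain such points: $\sum_{j}\min_{I_{j}}(h')^{2}\,T\le6\excess$, so for all but $O(\excess/(T\eta^{2}))$ windows there are cut points with $\abs{h'}\le\eta$. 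The boundary terms then cost $O(M\eta)$ in total.

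Combining \cref{prop:conversion} with \cref{cor:dipsum} over these windows, and choosing $\lambda$ as a small power of $L$, gives
\[
\sum_{j}\widehat\excess_{I_{j}}\ge(1-\lambda)\bigl(M-O(\excess)-O(\excess/T\eta^{2})\bigr)\dipconst\Delta^{5/4}-\frac{C}{\lambda}\max h\cdot\excess-CM\eta.
\]

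\emph{Count of windows.} For the count $M$ I use the sharp form of \cref{lem:separation}, which needs $\zeta\le T$ and $L^{-1/2}\le T\le1$. To get these I first bootstrap $\Delta$. The crude lower bound $\excess\gtrsim\Delta^{-1}$ from \cref{prop:head}, together with \cref{eq:bootstrap}, gives $\Delta\gtrsim L^{-4/9}$. The crude bound $\excess\gtrsim L\Delta^{5/4}$ from the dips gives $\Delta\lesssim L^{-4/9}$. With $\Delta\asymp L^{-4/9}$ and $\epsilon=L^{-8/45}$ one has $\zeta\asymp L^{-4/9+8/45}\ll T\asymp L^{-1/9}$. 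Hence $M\ge\frac{L}{2\pi}(1-CT)$.

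\emph{Balancing.} Summing the two contributions with \cref{eq:additivity},
\[
\excess\ge\frac{\dgconst^{2}}{\Delta}+\frac{L}{2\pi}\dipconst\Delta^{5/4}-E,
\]
where $E$ collects the errors. Minimising the main part over $\Delta$ via \cref{eq:generaltradeoff} with $\beta=5/4$ yields exactly $\cstar L^{4/9}$. What remains is to check $E=O(L^{1/3})$, and this is where I expect the main obstacle.

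The error terms, measured in units of the main term $L^{4/9}$, are:
\begin{itemize}
\item $\epsilon^{3/2}=L^{-4/15}$ and $\Delta/\epsilon=L^{-4/15}$, from the fold;
\item $T\asymp L^{-1/9}$, from the count, which is exactly the relative loss producing $L^{4/9-1/9}=L^{1/3}$;
\item the conversion term $\lambda^{-1}H\excess$, which demands $H$ small on the dip windows.
\end{itemize}
For the last item I would restrict to windows that stay at depth $\le C\Delta$ (or $\le L^{-c}$) outside a set controlled by \cref{eq:B1}, and take $\lambda=L^{-1/9}$. The boundary terms at the dip cuts should fit within $O(L^{1/3})$ by choosing $\eta$ between $L^{-2/3}$-type thresholds. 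If they do not, I would fall back on cutting the windows at depth critical points, where $\bterm=0$ by \cref{rem:tangentialcut}, after showing that most windows contain both a local minimum of $h$ on each side of the peak.
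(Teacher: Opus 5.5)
Your architecture matches the paper's. The fold is priced on $F_{\epsilon}$ through \cref{prop:head}, paying the boundary term $4\epsilon$. The windows of \cref{cor:dipsum} are priced through \cref{prop:conversion}. The pieces are added by \cref{eq:additivity}, and $\Delta$ is bootstrapped to $\Delta\asymp L^{-4/9}$ (fold first, dips second) before the sharp count of \cref{lem:separation} is used.

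\textbf{The re-cutting step fails.} Re-cutting the windows at points where $\abs{h'}\le\eta$ cannot close. With your count, the windows lacking such a point number $\lesssim\excess/(T\eta^{2})\lesssim L^{5/9}\eta^{-2}$. Each is worth $\dipconst\Delta^{5/4}\asymp L^{-5/9}$, so discarding them costs $\asymp\eta^{-2}$, while the boundary terms cost $\asymp M\eta\asymp L\eta$. The first is $O(L^{1/3})$ only if $\eta\gtrsim L^{-1/6}$, the second only if $\eta\lesssim L^{-2/3}$. Even an error $o(L^{4/9})$ would need $L^{-2/9}\ll\eta\ll L^{-5/9}$, which is again impossible. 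Moving the endpoints of $I_{j}$ would also in general break the hypothesis $\dist(\sigma_{j},\partial I_{j})\ge\quartrad\Delta^{1/4}$ of \cref{lem:dipsharp}.

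The point you are missing is that there is nothing to pay. \cref{prop:conversion} bounds the charge $\widehat{\excess}_{I_{j}}$ itself, boundary term included. \cref{eq:additivity} adds the charges of arbitrary pairwise disjoint arcs at no cost, because $\widehat{\excess}_{J}=\int_{J}\mathcal{D}$ with $\mathcal{D}\ge0$. The only boundary term the proof ever pays is the one on $F_{\epsilon}$, and only because \cref{prop:head} controls $\energy_{F_{\epsilon}}$ rather than the charge. Keep the midpoint windows as they are, and drop both the re-cutting (hence the $-CM\eta$ term) and the fallback to critical points.

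\textbf{The depth control is not settled.} The control of $H=\max_{I_{j}}h$ in the conversion term is not settled as written. A threshold $C\Delta$ cannot be enforced by \cref{eq:B1}: a window with $\max h>C\Delta$ need only carry $\int h\gtrsim\Delta^{2}$, so such windows may number $\excess/\Delta^{2}\asymp L^{4/3}$. For the same reason, windows reaching depth $\tfrac1{10}$ number $\lesssim\excess/T$, not $\lesssim\excess$, since a window may have length only $T$.

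Discarding above a threshold $\varsigma$ with $\lambda=L^{-1/9}$ does work, but only at $\varsigma\asymp L^{-2/9}$. The discarded windows number at most $4\excess/\varsigma^{2}$, using $\abs{I_{j}}\ge T\ge\varsigma$ and the Lipschitz bound, and they cost $\lesssim L^{-1/9}\varsigma^{-2}$. The conversion term is $\lambda^{-1}\varsigma\excess\asymp L^{5/9}\varsigma$. Both are $O(L^{1/3})$ exactly at that value and nowhere else.

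The paper discards nothing. If $\max_{I_{j}}h>\varsigma$, then $\widehat{\excess}_{I_{j}}\ge\int_{I_{j}}h\ge\varsigma^{2}/4$ by \cref{cor:localbudgets}. The choice $\varsigma=6\Delta^{5/8}$ makes this equal to $9\Delta^{5/4}\ge\dipconst\Delta^{5/4}$. Otherwise, \cref{prop:conversion} with $\lambda=\varsigma^{1/2}$, solved for $\widehat{\excess}_{I_{j}}$ using $\int_{I_{j}}h\le\widehat{\excess}_{I_{j}}$, gives $\widehat{\excess}_{I_{j}}\ge(1-11\varsigma^{1/2})\dipconst\Delta^{5/4}$. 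That is a relative loss $\asymp\Delta^{5/16}$, strictly below $L^{-1/9}$. So every window disjoint from $F_{\epsilon}$ pays its full dip price, and the only loss of size $L^{1/3}$ is the count.
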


\begin{proof}
  By \cref{rem:reduction} it suffices to bound $\excess$ from below on $\emb_{L}^{\bootstrap}$, and by
  \cref{rem:avoidorigin} we may assume \cref{eq:avoidorigin}; all constants below depend only on
  $\bootstrap$, which is itself absolute.
  Let $\Delta$, $T=2\quartrad\Delta^{1/4}$, $\zeta$ and the windows $I_{1},\dots,I_{M}$ at level
  $\delta=\Delta$ be as fixed after \cref{lem:shallowfold}, which makes that level admissible, and set
  \begin{equation}
    \label{eq:levelchoice}
    \varsigma:=6\,\Delta^{5/8},\qquad \lambda:=\varsigma^{1/2} .
  \end{equation}
  By \cref{lem:shallowfold}, $\Delta\le C L^{-5/18}$, so for $L\ge L_{0}$ we have $\varsigma\le\tfrac1{10}$,
  $\lambda\le\tfrac1{22}$, and $\varsigma\le T$, the last because $6\Delta^{5/8}\le2\quartrad\Delta^{1/4}$
  whenever $\Delta^{3/8}\le\quartrad/3$; and $\varsigma^{2}/4=9\Delta^{5/4}\ge\dipconst\Delta^{5/4}$, since
  $\dipconst<9$ by \cref{eq:dipconst}.
  The level $\varsigma$ is tied to $\Delta$ rather than to $L$ because the two-sided bound
  $\Delta\asymp L^{-4/9}$ is only available after Step~3.

  \emph{Step 1: $\Delta\ge c\,L^{-4/9}$.}
  By \cref{eq:arcexcess,eq:boundaryterm}, and since $\abs{h'}=\sin\epsilon$ at the endpoints of
  $F_{\epsilon}$ while $\abs{F_{\epsilon}}\le\Delta/\sin\epsilon$,
  \begin{equation}
    \label{eq:headcharge}
    \widehat{\excess}_{F_{\epsilon}}
    \ \ge\ \energy_{F_{\epsilon}}-\abs{F_{\epsilon}}-2\bigl(\abs{h'}(a)+\abs{h'}(b)\bigr)
    \ \ge\ \frac{\dgconst^{2}}{\Delta}\Bigl(1-C\epsilon^{3/2}-C\frac{\Delta}{\epsilon}\Bigr)
      -\frac{2\Delta}{\epsilon}-4\epsilon ,
  \end{equation}
  by \cref{prop:head}, whose hypothesis $\Delta\le\tfrac12$ holds by \cref{lem:shallowfold}, and by
  $\sin\epsilon\ge\tfrac\epsilon2$.
  By \cref{lem:shallowfold} and \cref{eq:epschoice}, $\Delta/\epsilon\le CL^{-5/18+8/45}=CL^{-1/10}$ and
  $\epsilon^{3/2}=L^{-4/15}$, so for $L\ge L_{0}$ the parenthesis is at least $\tfrac12$; and
  $\widehat{\excess}_{F_{\epsilon}}\le\excess$ by \cref{eq:additivity} applied to the single arc
  $F_{\epsilon}$.
  Hence, by \cref{eq:bootstrap},
  \[
    \frac{\dgconst^{2}}{2\Delta}\ \le\ \excess+5\ \le\ 2\bootstrap L^{4/9},
    \qquad\text{that is}\qquad
    \Delta\ \ge\ \frac{\dgconst^{2}}{4\bootstrap}\,L^{-4/9}=:c_{4}L^{-4/9} .
  \]

  \emph{Step 2: the floor of a window.}
  The windows meeting $F_{\epsilon}$ number at most $\abs{F_{\epsilon}}/T+2
  \le\Delta^{3/4}/(2\quartrad\sin\epsilon)+2\le C$, by \cref{lem:shallowfold} and
  $\tfrac34\cdot\tfrac5{18}>\tfrac8{45}$; we set them aside.
  We claim that every other window satisfies
  \begin{equation}
    \label{eq:windowfloor}
    \widehat{\excess}_{I_{j}}\ \ge\ \bigl(1-11\,\varsigma^{1/2}\bigr)\,\dipconst\,\Delta^{5/4} .
  \end{equation}
  Two cases, according to the size of $H_{j}:=\max_{I_{j}}h$.
  If $H_{j}>\varsigma$, let $p$ be a point of $I_{j}$ with $h(p)>\varsigma$; since $\abs{I_{j}}\ge T\ge\varsigma$,
  the two halves of $I_{j}$ on either side of $\sigma_{j}$ having length at least $T/2$ each
  (\cref{cor:dipsum}), the longer side of $p$ inside $I_{j}$ has length at least $\varsigma/2$, and $h\ge\varsigma/2$ on its initial
  segment of length $\varsigma/2$ by the Lipschitz bound; hence, by \cref{cor:localbudgets},
  \[
    \widehat{\excess}_{I_{j}}\ \ge\ \int_{I_{j}}h\,ds\ \ge\ \frac{\varsigma^{2}}4\ =\ 9\Delta^{5/4}
    \ \ge\ \dipconst\Delta^{5/4},
  \]
  which is more than \cref{eq:windowfloor} asks: a deep window is not cheaper than a shallow one, and need
  not be discarded.
  If $H_{j}\le\varsigma$, then $I_{j}\subset\{h\le\tfrac1{10}\}$, and \cref{prop:conversion} together with
  \cref{cor:dipsum} gives
  $\widehat{\excess}_{I_{j}}\ge(1-\lambda)\dipconst\Delta^{5/4}-10\lambda^{-1}\varsigma\,\widehat{\excess}_{I_{j}}$,
  that is
  \[
    \widehat{\excess}_{I_{j}}\ \ge\ \frac{1-\lambda}{1+10\lambda^{-1}\varsigma}\,\dipconst\Delta^{5/4}
    \ \ge\ \bigl(1-\lambda-10\lambda^{-1}\varsigma\bigr)\dipconst\Delta^{5/4}
    \ =\ \bigl(1-11\varsigma^{1/2}\bigr)\dipconst\Delta^{5/4},
  \]
  by the choice $\lambda=\varsigma^{1/2}$.
  This proves \cref{eq:windowfloor}; note that it costs no window at all, and that the relative loss
  $11\varsigma^{1/2}\asymp\Delta^{5/16}$ is the whole price of converting the charge into the model functional.

  \emph{Step 3: $\Delta\le C\,L^{-4/9}$.}
  The windows are pairwise disjoint, so summing \cref{eq:windowfloor} over the at least $M-C$ windows disjoint
  from $F_{\epsilon}$ and using \cref{eq:additivity,eq:countmaster,eq:bootstrap}, with $\lambda\le\tfrac1{22}$,
  \[
    \bootstrap L^{4/9}\ \ge\ \excess\ \ge\ \Bigl(\frac{L}{14}-C\Bigr)\cdot\tfrac12\,\dipconst\Delta^{5/4},
    \qquad\text{whence}\qquad
    \Delta\ \le\ C_{5}\,L^{-4/9} .
  \]

  \emph{Step 4: the sharp assembly.}
  By Steps 1 and 3, $c_{4}L^{-4/9}\le\Delta\le C_{5}L^{-4/9}$, so
  $T=2\quartrad\Delta^{1/4}$ satisfies $L^{-1/2}\le cL^{-1/9}\le T\le CL^{-1/9}\le1$ and
  $\zeta=2\Delta/\sin\epsilon\le CL^{-4/9+8/45}=CL^{-4/15}\le T$ for $L\ge L_{0}$: the sharp branch of
  \cref{lem:separation} applies and gives $M\ge\tfrac{L}{2\pi}(1-CT)$, so that at least
  $\tfrac{L}{2\pi}(1-CL^{-1/9})-C$ windows are disjoint from $F_{\epsilon}$.
  These windows and the arc $F_{\epsilon}$ are pairwise disjoint, so \cref{eq:additivity},
  \cref{eq:headcharge} and \cref{eq:windowfloor} give
  \begin{align*}
    \excess\ &\ge\ \widehat{\excess}_{F_{\epsilon}}+\sum_{I_{j}\cap F_{\epsilon}=\emptyset}\widehat{\excess}_{I_{j}}\\
    &\ge\ \frac{\dgconst^{2}}{\Delta}\Bigl(1-C\epsilon^{3/2}-C\frac{\Delta}{\epsilon}\Bigr)-C\\
    &\qquad\ +\ \Bigl(\frac{L}{2\pi}\bigl(1-CL^{-1/9}\bigr)-C\Bigr)\bigl(1-11\varsigma^{1/2}\bigr)\,\dipconst\,\Delta^{5/4}\\
    &\ge\ \frac{\dgconst^{2}}{\Delta}+\frac{\dipconst}{2\pi}\,L\,\Delta^{5/4}-CL^{1/3},
  \end{align*}
  where the errors were absorbed as follows.
  By Step 1, both $\dgconst^{2}\epsilon^{3/2}/\Delta$ and $\dgconst^{2}/\epsilon$ are at most
  $CL^{8/45}$, and $\tfrac8{45}<\tfrac13$; by Step 3, $L\Delta^{5/4}\le CL^{4/9}$, so the separation loss
  contributes at most $CL^{4/9}\cdot L^{-1/9}=CL^{1/3}$ and the conversion loss at most
  $CL^{4/9}\varsigma^{1/2}\le CL^{4/9}\Delta^{5/16}\le CL^{4/9-5/36}=CL^{11/36}\le CL^{1/3}$; and
  $C\dipconst\Delta^{5/4}\le C$.
  Finally, for every $\Delta>0$,
  \[
    \frac{a}{\Delta}+b\,\Delta^{5/4}\ \ge\ \frac95\,a^{5/9}\Bigl(\frac{5b}{4}\Bigr)^{4/9},
  \]
  the minimum being attained at $\Delta=(4a/5b)^{4/9}$; with $a=\dgconst^{2}$ and
  $b=\dipconst L/2\pi$ the right-hand side is $\cstar L^{4/9}$, and the theorem follows from
  \cref{rem:reduction}.
\end{proof}

The proof gives more than the inequality it was written for: it shows that the mechanism priced above is
forced on every competitor of small excess, and not merely available to good ones.

\begin{corollary}
  \label{cor:deltarange}
  There are $c>0$, $C$ and $L_{0}$ such that every $\gamma\in\emb_{L}^{\bootstrap}$ satisfying
  \cref{eq:avoidorigin} with $L\ge L_{0}$ has
  \[
    c\,L^{-4/9}\ \le\ \Delta\ \le\ C\,L^{-4/9},
    \qquad
    \widehat{\excess}_{F_{\epsilon}}\ \ge\ c\,L^{4/9} .
  \]
\end{corollary}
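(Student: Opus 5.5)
The plan is to read \cref{cor:deltarange} off Steps 1 and 3 of the proof of \cref{thm:lowersharp}. Those steps used nothing about $\gamma$ beyond membership in $\emb_{L}^{\bootstrap}$ and \cref{eq:avoidorigin}. They never used that $\gamma$ nearly attains $\minen_{L}$. The constants $c_{4}$ and $C_{5}$ found there depend only on $\bootstrap$, hence are absolute. So, for such a $\gamma$, I fix the envelope fold $s^{*}$ of \cref{lem:env}, the interval $F_{\epsilon}$ and $\Delta=\max_{F_{\epsilon}}h$ as in \cref{eq:Fdef}, and I invoke \cref{lem:shallowfold}. That lemma gives $\Delta\le CL^{-5/18}$, which makes $\delta=\Delta$ admissible in \cref{lem:blocking,lem:separation} and puts $\Delta\le\tfrac12$.

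For the lower bound on $\Delta$ I repeat Step 1. Apply \cref{prop:head} to $F_{\epsilon}$ and subtract the length $\abs{F_{\epsilon}}\le\Delta/\sin\epsilon$ together with the boundary term, which is controlled by \cref{eq:boundaryterm} with $\abs{h'}=\sin\epsilon$ at the endpoints. This yields \cref{eq:headcharge}. Using $\Delta/\epsilon\le CL^{-1/10}$ and $\epsilon^{3/2}=L^{-4/15}$, the parenthesis is at least $\tfrac12$, so
\[
\widehat{\excess}_{F_{\epsilon}}\ \ge\ \frac{\dgconst^{2}}{2\Delta}-5 .
\]
Combining this with $\widehat{\excess}_{F_{\epsilon}}\le\excess\le\bootstrap L^{4/9}$, from \cref{eq:additivity,eq:bootstrap}, gives $\Delta\ge c_{4}L^{-4/9}$.

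For the upper bound I repeat Steps 2 and 3. There are at most $C$ windows meeting $F_{\epsilon}$, and I discard them. Every other window satisfies the floor \cref{eq:windowfloor}, split as before into the deep case $H_{j}>\varsigma$ and the shallow case, which uses \cref{prop:conversion,cor:dipsum}. By \cref{lem:separation} there are at least $L/14$ windows. Summing the floors through \cref{eq:additivity} gives $\bootstrap L^{4/9}\ge c\,L\Delta^{5/4}$, that is $\Delta\le C_{5}L^{-4/9}$.

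For the second claim I return to \cref{eq:headcharge}. Now $\Delta\le C_{5}L^{-4/9}$ is available, so
\[
\widehat{\excess}_{F_{\epsilon}}\ \ge\ \frac{\dgconst^{2}}{2C_{5}}\,L^{4/9}-5\ \ge\ c\,L^{4/9}
\]
for $L\ge L_{0}$.

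I expect no step to be a real obstacle. The only care needed is to confirm that each constant in Steps 1–3 depends only on $\bootstrap$ and not on extremality of $\gamma$, and that the thresholds $L_{0}$ used in \cref{lem:env,lem:shallowfold,lem:separation} are likewise uniform over the class.
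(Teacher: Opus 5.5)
Your proposal is correct and follows the paper's proof essentially verbatim. The paper likewise reads the two-sided bound on $\Delta$ off Steps~1--3 of the proof of \cref{thm:lowersharp}, which use only \cref{eq:bootstrap,eq:avoidorigin}, and it obtains the last claim from $\widehat{\excess}_{F_{\epsilon}}\ge\dgconst^{2}/(2\Delta)-5$ combined with $\Delta\le C_{5}L^{-4/9}$, exactly as you do; you are also right to list Step~2 explicitly, since the floor \cref{eq:windowfloor} is what Step~3 sums.
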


\begin{proof}
  The two-sided bound on $\Delta$ is the content of Steps~1 and~3 of the preceding proof, which use nothing
  about $\gamma$ beyond \cref{eq:bootstrap,eq:avoidorigin}.
  By \cref{eq:headcharge} and the absorption of its error terms in Step~1,
  $\widehat{\excess}_{F_{\epsilon}}\ge\dgconst^{2}/(2\Delta)-5$, and the upper bound on $\Delta$ concludes.
\end{proof}

So the envelope fold of any competitor of small excess sits at the depth $\asymp L^{-4/9}$ realised by the
construction of \cref{sec:upper}; and, $\excess$ being at most $\bootstrap L^{4/9}$ there, that single fold
already carries a definite fraction of the whole excess.
This is recorded rather than used: nothing below appeals to it.

\begin{remark}
  \label{rem:whyLthird}
  The error $O(L^{1/3})$ has a single source.
  In the proof above the fold is priced to within $CL^{8/45}$, the conversion of the charge of a window into the
  model functional loses the relative amount $11\varsigma^{1/2}\asymp\Delta^{5/16}$, that is $CL^{11/36}$ in
  absolute terms, and only the $O(1)$ windows meeting the fold are set aside; all of this lies strictly below
  $L^{1/3}$.
  What lands on $L^{1/3}$ is the separation loss $CT\asymp\Delta^{1/4}$ of \cref{lem:separation} alone: the
  calibration of \cref{lem:dipsharp} requires each peak to sit at distance $\quartrad\Delta^{1/4}$ from the ends
  of its window, so the count of \cref{lem:separation} excludes a collar of width $T\asymp\Delta^{1/4}$ around
  each peak, and an arc can legitimately gain an angle $\lesssim T$ while passing beneath the barrier inside a
  collar.
  The count is therefore known only to the relative precision $O(\Delta^{1/4})=O(L^{-1/9})$, whereas the
  second term of \cref{eq:expansion} requires $M=\tfrac{L}{2\pi}\bigl(1+O(\Delta^{1/2})\bigr)$.
  The loss is an artefact of the method: in the competitor of \cref{sec:upper} the peaks are $\asymp2\pi$
  apart and no angle is wasted.
  Nor can it be avoided by weakening \cref{lem:dipsharp}, whose constant genuinely halves in that case
  (\cref{rem:freeends}).

  The conversion loss, by contrast, is not a genuine limitation, and it is worth recording what the master
  identity gives when nothing is discarded.
  By \cref{eq:master2}, $1-r^{2}=2h-h^{2}$, $\langle\gamma,\tau\rangle^{2}=(1-h)^{2}(h')^{2}$ and
  \cref{eq:exactidentity}, on an arc $J\subset\{h\le\tfrac1{10}\}$,
  \[
    \widehat{\excess}_{J}
    \;=\;\int_{J}\bigl(2h-h^{2}\bigr)ds+\int_{J}(1-h)^{2}(h')^{2}\,ds
    +\int_{J}\frac{\bigl(h''+e\bigr)^{2}}{1-(h')^{2}}\,ds ,
  \]
  and since $e=2h+O\bigl(h^{2}+h(h')^{2}\bigr)$ and $\int_{J}2h''h=2[hh']_{\partial J}-2\int_{J}(h')^{2}$,
  formally
  \[
    \widehat{\excess}_{J}
    \;=\;\int_{J}\Bigl(2h+(h'')^{2}-3(h')^{2}\Bigr)ds\;+\;4\bigl[hh'\bigr]_{\partial J}\;+\;E_{J},
  \]
  where $E_{J}$ collects terms quadratic in $h$, of the form $h(h')^{2}$, and of the form $(h')^{2}(h'')^{2}$,
  each of order $\Delta^{9/4}$ on a window of the competitor of \cref{sec:upper}.
  The per-window functional is thus exactly that of \cref{lem:dipcost}, including the coefficient $-3$: the
  charge of an arc \emph{is} the second-order expansion of its excess, and the identity \cref{eq:master2}
  supplies it without any assumption of graphicality.
  What is missing for the second-order term from below is therefore not the conversion but two other steps: a
  lower bound on the angular spacing of deep points in consecutive turns, or a pricing of the angle gained
  inside a collar; and a perturbative form of the calibration carrying the $-3\int(u')^{2}$ term, of value
  $\dipconst A^{5/4}-3\dipcorr A^{7/4}+O(A^{9/4})$.
  Neither is carried out here.
\end{remark}

\begin{conjecture}
  \label{conj:ctwo}
  With $\ctwo=-3\dipcorr\depthconst^{7/4}/2\pi$ as in \cref{thm:upper},
  \[
    \minen_{L}\ =\ L+\cstar\,L^{4/9}+\ctwo\,L^{2/9}+O(1)
    \qquad\text{as }L\to\infty .
  \]
  The upper bound is \cref{thm:upper}; the missing half is the lower bound, whose obstruction is localised in
  \cref{rem:whyLthird}.
\end{conjecture}

\begin{proof}[Proof of \cref{thm:sharpmain}]
  Combine \cref{thm:lowersharp,thm:upper}.
\end{proof}

%% file: sec6-remarks.tex
\section{Remarks and open problems}
\label{sec:remarks}

\subsection*{What is missing from the expansion}

Of the four terms of \cref{eq:expansion} we have established the first two, and the third from above only.
Two obstructions remain, of quite different character.

The first is the error $O(L^{1/3})$ in \cref{thm:sharpmain}, which has a single source: the separation loss
of \cref{lem:separation}, which knows the passage count only to relative precision $O(\Delta^{1/4})$, against
the $O(\Delta^{1/2})$ that the second term of \cref{eq:expansion} requires.
Every other loss of the proof of \cref{thm:lowersharp} lies strictly below $L^{1/3}$, and \cref{rem:whyLthird}
records that the master identity \cref{eq:master2} already delivers, per window, the second-order functional
$\int(2h+(h'')^{2}-3(h')^{2})$ of \cref{lem:dipcost}; what is missing is the calibration of that functional
and the sharper count.
This is \cref{conj:ctwo}; the upper bound already gives $\ctwo=-3\dipcorr\depthconst^{7/4}/2\pi$.

The constant term $c_{0}$ is harder, and not only longer.
It requires the fold to be priced to relative accuracy $O(\Delta)$, and at that precision the disk is not a
strip: the correction term in \cref{eq:sweepidentity}, which we bounded by $\abs{J}/(1-h)$, is itself of
order one and must be evaluated.
That amounts to perturbing the free elastica of \cref{lem:fold} to first order in the ambient curvature, a
computation we have not attempted.
The three contributions to $c_{0}$ --- from the fold, from the third-order term of \cref{lem:dipcost}, and
from the interior connector discussed next --- would then also have to be shown not to interact at that
order.

\subsection*{The interior connector}

A closed competitor cannot be contained in a neighbourhood of $\partial\disk$ on which the sense of rotation
about the origin is constant: by \cref{lem:omega} the sweep density $\sweepdens$ is continuous, and the
competitor of \cref{sec:upper} shows that both signs must in fact occur on sets of length $\asymp L$.
Where the sign changes, the curve crosses the interior of the disk, and the crossing is not free.
Let $\conncl$ be the class of arcs contained in $\overline\disk$ which meet $\partial\disk$ tangentially at
their two endpoints and reverse the sense of rotation about the origin, and let $\econn$ be the infimum of
the excess over $\conncl$; a short computation, which we omit, gives $2\pi-4\le\econn\le3\pi$, so that
$\econn$ is a positive absolute constant.
The $S$-arc of \cref{lem:connector} is one such connector, and the whole of its $O(1)$ cost is of this
nature.

Cutting a competitor at two parameters where $\sweepdens$ has opposite signs and $h$, $h'$ are small, and
capping the two ends onto $\partial\disk$, produces a member of $\conncl$; carried out with the tolerances
matched to the estimates of \cref{sec:lower}, this yields
$\minen_{L}\ge L+c\,L^{4/9}+\econn-o(1)$ with an unspecified $c>0$ --- the interior connector contributes
\emph{additively}, by an amount independent of $L$.
We have not included the argument.
It is incomparable with \cref{thm:sharpmain}, trading the sharp constant for the additive term, and
$\econn=O(1)$ lies far below the error $O(L^{1/3})$ there; the natural home for it is a treatment that also
resolves $\ctwo$ and $c_{0}$, where the constant term becomes visible and $\econn$ is one of its three
ingredients.
Even the value of $\econn$ is then in question: the minimiser, if it exists, is a confined elastica with
multiplier $-1$, embedded because such an elastica is a graph over a line, but existence is not clear, since
minimising sequences may spiral just inside $\partial\disk$ at no cost, so that compactness has to be
established modulo arcs of $\partial\disk$.

\subsection*{Three dimensions}

The contrast with the Willmore problem in the ball \cite{MullerRoger2014} recorded in \cref{sec:intro} is
worth restating in the light of \cref{thm:sharpmain}.
There the second-order term is bounded, because nested spheres may be joined by catenoidal necks at no cost;
here the corresponding operation is a fold, and \cref{lem:fold} shows that a fold confined to a strip of width
$\Delta$ costs exactly $\dgconst^{2}/\Delta$.
The whole of $\cstar L^{4/9}$ is the price of that obstruction.

%% file: sec7-tool-disclosure.tex
\section{Tool and computational resource disclosure}
\label{sec:ai}

Following the recommendations of the Leiden Declaration on Artificial Intelligence and Mathematics
(2 June 2026), endorsed by the International Mathematical Union, I disclose the following.

\emph{Tools.}
The mathematical content of this paper was developed in collaboration with large language models
developed by Anthropic, principally Claude Opus~5 (\texttt{claude-opus-5}) and Claude Fable~5.1
(\texttt{claude-fable-5-1}), used between end of July and beginning of September~2026 via \textit{claude.ai}.
The models were run with three mininal author-supplied instruction files directing, respectively, the
line-by-line refereeing of proofs, the search for proof strategies, and the preparation of the
\LaTeX{} source.

Large language models developed by Alibaba Cloud, principally {\texttt{QWEN3.8-Max}} accessed via \textit{chat.qwen.ai/}, 
have been used to streamline some parts of the final version of the paper. 

\emph{Division of labour.}
My own contribution consists of the following: framing the problem; identifying the relevant
literature, and in particular the result from which the optimal constant is obtained; proposing the
existence of a fold on the outer envelope of the wire;  proposing to use the density argument proved in 
Lemma~\ref{lem:density}; and rewriting the statement of the main theorem in its present form.
All remaining mathematical content was generated by the models.
This includes, in particular, the definition of a fold and the proof that one may be selected on
the outer envelope (\cref{sec:lower-envelope}); the four energy budgets of \cref{prop:budgets} and
the observation that all four issue from the single closedness identity \cref{eq:closedness}; the
competitor of \cref{sec:upper} and the balance fixing $\delta\asymp L^{-4/9}$; the exact identity
of \cref{lem:identity}; and the write-up of the whole.
Several intermediate claims proposed during that process turned out to be false, and were
discovered to be so either by the models themselves or by me; the arguments presented here are
those that survived.
I have verified every statement and every proof in the present version.

\emph{Attribution.}
Automated tools are known to reproduce mathematical ideas without crediting their sources, and I
regard the resulting obligation to search actively for antecedents as falling on me.
I have therefore looked independently for prior occurrences of the arguments used here.

\emph{References.}
Every work cited in this paper has been located and read by me, and every attribution of a result
to a cited work has been checked against that work.

\emph{Responsibility.}
Responsibility for the correctness and adequacy of the arguments and results, and for the accuracy
and completeness of the citations to relevant prior work, rests exclusively with me.

%% file: appA-computations.tex
\section{Elementary computations}
\label{sec:appA}

We first record the approximation lemma behind \cref{rem:avoidorigin}: curves through the origin are
negligible for every infimum and every lower bound considered in this paper.

\begin{lemma}
  \label{lem:density}
  Let $\gamma\in\emb_{L}$ with $0\in\gamma([0,L])$ and let $\eta>0$.
  Then there exists $\gamma'\in\emb_{L}$ with $0\notin\gamma'([0,L])$ and
  $\energy(\gamma')\le\energy(\gamma)+\eta$.
\end{lemma}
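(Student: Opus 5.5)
We perturb $\gamma$ near the finitely many parameters where it meets the origin; since $\gamma$ is injective there is exactly one such parameter, say $s_{0}$ with $\gamma(s_{0})=0$. The simplest competitor is a translate: replace $\gamma$ by $\gamma_{t}:=\gamma+t\,e$ for a small vector $e$. Translation preserves arclength, injectivity and $\energy$ exactly, and $0\notin\gamma_{t}([0,L])$ as soon as $-te\notin\gamma([0,L])$. The trace is a rectifiable curve, hence of zero area, so such $e$ exist with $\abs{te}$ as small as we like. The only defect is confinement: $\gamma_{t}$ may leave $\overline{\disk}$. We repair this by a dilation. First translate by $te$ with $\abs{te}\le\rho$, then apply $z\mapsto z/(1+\rho)$, which maps the disk of radius $1+\rho$ into $\overline{\disk}$ and still avoids the origin.

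The dilation changes the length to $L/(1+\rho)$ and multiplies the energy by $1+\rho$. To restore the length $L$ exactly, we insert a short extra piece. The cleanest choice: pick a parameter where the curve is nearly straight and locally far from other strands, and splice in a small embedded wiggle (a $W^{2,2}$ bump of amplitude and width tending to zero) adding length $L\rho/(1+\rho)$ at energy cost $o(1)$ as $\rho\to0$. Alternatively, and more simply, instead of dilating we can use the freedom of $\rho$: the energy of the rescaled curve is $(1+\rho)\energy(\gamma)$, and adding length $\ell=O(\rho L)$ by a smooth $W^{2,2}$ perturbation of a short arc costs $O(\ell^{2}/\text{width}^{3})$-type energy, which can be made smaller than $\eta/2$ by choosing the arc width fixed and $\rho$ small. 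Embeddedness of the modified curve is preserved because the perturbation is $C^{1}$-small and supported on an arc at positive distance from the rest of the trace (compactness plus injectivity gives a tubular neighbourhood of a short arc meeting no other part of the curve). It also stays inside $\overline{\disk}$ if the chosen arc lies in the open disk, and such an arc exists unless the whole curve is on $\partial\disk$, impossible for $L>2\pi$ and an embedded curve.

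Summing up: $\energy(\gamma')\le(1+\rho)\energy(\gamma)+\eta/2\le\energy(\gamma)+\eta$ for $\rho$ small, and $\gamma'\in\emb_{L}$ avoids the origin. The main obstacle is the length-restoring insertion. It must add length of order $\rho$ at vanishing energy, keep the curve embedded and in the disk, and remain $W^{2,2}$ with arclength parametrisation. I would do it concretely by replacing, in a local graph chart over a short arc, the graph $g$ by $g+\epsilon\phi$ with a fixed smooth $\phi$, and choosing $\epsilon$ by continuity (intermediate value theorem in $\epsilon$) so that the total length is exactly $L$. The energy change is continuous in $\epsilon$ and vanishes at $\epsilon=0$, and the required $\epsilon\to0$ as $\rho\to0$.
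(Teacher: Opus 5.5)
Your argument is correct, but it is organised differently from the paper's.

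\textbf{The paper's route.} The paper never leaves $\overline{\disk}$. It takes a graph chart at the unique parameter with $\gamma(s_{0})=0$ and lifts the graph off the origin by $\varepsilon\phi$ with $\phi(0)=1$. In the same chart it adds a second bump $\mu\chi$, chosen by strict convexity of the length in $\mu$, so that the length becomes \emph{at least} $L$. It then shrinks by $\lambda=L/g(\mu)\le1$. The homothety makes the length exactly $L$, keeps the curve in $\lambda\overline{\disk}\subset\overline{\disk}$ and away from $0$, and costs only the energy factor $\lambda^{-1}\to1$.

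\textbf{Your route.} You remove the origin by a small translation. This is an isometry and needs no local analysis at $0$, only that the trace is Lebesgue-null, but it breaks confinement. The dilation by $(1+\rho)^{-1}$ repairs confinement at the price of a length deficit. You must then restore that deficit \emph{exactly}, by an intermediate-value argument in an interior graph chart. So your route handles the origin globally, while the paper's ordering (overshoot, then shrink) needs only an inequality on the length and gets confinement automatically. The two combine well: perturb $\gamma$ on an interior arc until its length $\ell$ is at least $(1+\rho)L$, translate, and shrink by $L/\ell\le(1+\rho)^{-1}$. This maps the disk of radius $1+\rho$ into $\overline{\disk}$ and dispenses with the exact matching.

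\textbf{Points to tighten.} If you keep the exact matching, you should make two things explicit: that some $\epsilon$ actually reaches the target, and that $\epsilon\to0$ as $\rho\to0$. Both follow from the strict convexity of $\epsilon\mapsto\int\sqrt{1+(g'+\epsilon\phi')^{2}}$. This requires the chart and $\phi$ to be fixed on $\gamma$ independently of $\rho$; similarities then carry them to the translated and dilated curve. You should also note three smaller points.
\begin{itemize}
\item The modified curve still misses $0$. This is automatic for small $\epsilon$ if the chart avoids the origin.
\item An arc in the open disk exists simply because $0\in\gamma([0,L])$, with no appeal to $L>2\pi$.
\item The heuristic cost $O(\ell^{2}/\mathrm{width}^{3})$ is wrong. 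Already on a straight segment, a bump of amplitude $a$ and width $w$ adds length $\asymp a^{2}/w$ at energy cost $\asymp a^{2}/w^{3}$, that is $\asymp\ell/w^{2}$. Nothing depends on this, since your final argument uses only the continuity of $\energy$ in $\epsilon$.
\end{itemize}
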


\begin{proof}
  \emph{A graph window at the origin.}
  By injectivity there is exactly one $s_{0}$ with $\gamma(s_{0})=0$; rotate coordinates so that
  $\tau(s_{0})=e_{1}$.
  Write $\gamma=(x,y)$.
  Since $\gamma\in W^{2,2}\subset C^{1}$ and $x'(s_{0})=1$, there is $a>0$ with $x'\ge\tfrac12$ on
  $J_{0}:=[s_{0}-a,s_{0}+a]$, so $x|_{J_{0}}$ is a bi-Lipschitz $C^{1}$ change of variable and
  $\gamma(J_{0})$ is the graph of $f:=y\circ (x|_{J_{0}})^{-1}\in W^{2,2}$, with $f(0)=f'(0)=0$ and
  $\abs{f'}\le2$.
  The compact set $\gamma([0,L]\setminus \mathring J_{0})$ does not contain $0$, so it has positive distance
  from $0$; hence there is a closed rectangle $Q\subset\disk$ centred at $0$, with sides parallel to the axes
  and horizontal extent $[-b,b]$, such that
  \[
    \gamma([0,L])\cap Q=\bigl\{(t,f(t)):\abs t\le b\bigr\} .
  \]

  \emph{The perturbation.}
  Fix $\phi,\chi\in C^{\infty}_{c}(-b,b)$ with $\phi(0)=1$, $\chi(0)=0$, $\chi\not\equiv0$, and for small
  $\varepsilon,\mu$ let $\tilde\gamma_{\varepsilon,\mu}\in W^{2,2}$ be the closed curve obtained from $\gamma$
  by replacing the graph of $f$ over $[-b,b]$ with the graph of $f_{\varepsilon,\mu}:=f+\varepsilon\phi+\mu\chi$
  and reparametrising by arclength.
  For $\abs\varepsilon+\abs\mu$ small the new graph stays in $Q$, so $\tilde\gamma_{\varepsilon,\mu}$ is
  confined, closed, of class $W^{2,2}$ (the perturbation vanishes to all orders at $\pm b$), and embedded: it
  is injective on the graph, unchanged outside $Q$, and inside $Q$ only the graph is present.
  Its trace meets $0$ only if $f_{\varepsilon,\mu}(0)=\varepsilon$ vanishes; fix $\varepsilon\ne0$.

  \emph{Prescribing the length.}
  With $\varepsilon$ fixed, the length
  \[
    g(\mu):=\ell\bigl(\tilde\gamma_{\varepsilon,\mu}\bigr)
    =\bigl(L-\int_{-b}^{b}\sqrt{1+(f')^{2}}\,dt\bigr)
    +\int_{-b}^{b}\sqrt{1+(f'+\varepsilon\phi'+\mu\chi')^{2}}\,dt
  \]
  is of class $C^{2}$ in $\mu$ with
  $g''(\mu)=\int_{-b}^{b}\chi'^{2}\bigl(1+(f'+\varepsilon\phi'+\mu\chi')^{2}\bigr)^{-3/2}dt\ge c_{0}>0$
  uniformly for $\abs\varepsilon+\abs\mu\le1$, because $\abs{f'}\le2$.
  Hence $g(\mu)\ge g(0)+g'(0)\mu+\tfrac{c_{0}}2\mu^{2}$, and choosing the sign of $\mu$ equal to that of
  $g'(0)$ and $\abs\mu:=\bigl(2\abs{g(0)-L}/c_{0}\bigr)^{1/2}$ gives $g(\mu)\ge L$; since
  $\abs{g(0)-L}\le C\abs\varepsilon$, also $\mu=\mu(\varepsilon)\to0$ as $\varepsilon\to0$.

  \emph{Rescaling.}
  Set $\lambda:=L/g(\mu(\varepsilon))\in(0,1]$ and let $\gamma'$ be the arclength parametrisation of
  $\lambda\,\tilde\gamma_{\varepsilon,\mu(\varepsilon)}$.
  Since the length depends continuously on $(\varepsilon,\mu)$ and equals $L$ at $(0,0)$, while
  $\mu(\varepsilon)\to0$, one has $g(\mu(\varepsilon))\to L$ and hence $\lambda\to1$ as $\varepsilon\to0$.
  Then $\gamma'\in\emb_{L}$: its length is $L$, it lies in $\lambda\overline\disk\subset\overline\disk$, it is
  embedded, and it is positively oriented for small $\varepsilon$, the orientation being locally constant
  along a continuous family of embeddings.
  It avoids the origin, since the homothety fixes $0$.
  Finally, $\energy(\lambda\tilde\gamma)=\lambda^{-1}\energy(\tilde\gamma)$ and
  $\energy(\tilde\gamma_{\varepsilon,\mu})\to\energy(\gamma)$ as $(\varepsilon,\mu)\to0$, because the two
  curves differ only over $[-b,b]$, where the energy is
  $\int_{-b}^{b}f_{\varepsilon,\mu}''^{2}\,\bigl(1+(f_{\varepsilon,\mu}')^{2}\bigr)^{-5/2}dt$ and
  $f_{\varepsilon,\mu}\to f$ in $W^{2,2}$.
  Choosing $\varepsilon$ small enough that $\energy(\gamma')\le\energy(\gamma)+\eta$ completes the proof.
\end{proof}

We next record the expansion used in \cref{sec:upper}.

\begin{proof}[Proof of \cref{lem:graphexcess}]
  For the polar graph $\theta\mapsto(1-u(\theta))e^{i\theta}$ put
  \[
    a:=(1-u)^{2}+(u')^{2},\qquad b:=(u')^{2}+(1-u)u'',\qquad n:=a+b .
  \]
  Then $ds=a^{1/2}d\theta$ and the curvature of a polar graph $r=r(\theta)$, namely
  $\kappa=(r^{2}+2(r')^{2}-rr'')(r^{2}+(r')^{2})^{-3/2}$, becomes $\kappa=n\,a^{-3/2}$.
  Hence
  \[
    \kappa^{2}\,ds-ds=\frac{n^{2}-a^{3}}{a^{5/2}}\,d\theta
    =\frac{a^{2}(1-a)+2ab+b^{2}}{a^{5/2}}\,d\theta .
  \]
  Since $1-a=2u-u^{2}-(u')^{2}$ and $\norm{u}_{\infty}+\norm{u'}_{\infty}\le\tfrac18$, we have
  $\tfrac12\le a\le 2$, so $a^{-5/2}=1+O\bigl(\abs u+(u')^{2}\bigr)$ and the numerator equals
  \[
    2u+2u''+(u'')^{2}+O\bigl(u^{2}+(u')^{2}+\abs{u\,u''}+\abs{u''}^{3}\bigr),
  \]
  where we used $b=u''+O\bigl((u')^{2}+\abs{u\,u''}\bigr)$ and
  $b^{2}=(u'')^{2}+O\bigl(u^{2}+(u')^{2}+\abs{u''}^{3}\bigr)$, the term $-2u(u'')^{2}$ arising in $b^{2}$ being
  admissible by Young's inequality, $\abs u(u'')^{2}\le\tfrac13\abs{u}^{3}+\tfrac23\abs{u''}^{3}\le\tfrac1{24}u^{2}+\tfrac23\abs{u''}^{3}$, using $\abs u\le\tfrac18$.
  Multiplying the two expansions and integrating over $I$ gives the claim: the products of the error terms with
  $2u+2u''+(u'')^{2}$ are of the same admissible form, again by Young, through
  $\abs{u''}(u')^{2}\le\tfrac13\abs{u''}^{3}+\tfrac1{12}(u')^{2}$ and the inequality just displayed.
  This is why $\abs{u''}^{3}$ must appear in the error list, even though it is of higher order than the other
  three terms: it is what absorbs the cross terms.
\end{proof}

%% file: refs.bib
@article{BellettiniMugnai2004,
  author  = {Bellettini, Giovanni and Mugnai, Luca},
  title   = {Characterization and representation of the lower semicontinuous envelope of the elastica functional},
  journal = {Annales de l'Institut Henri Poincar\'e C, Analyse non lin\'eaire},
  volume  = {21},
  number  = {6},
  pages   = {839--880},
  year    = {2004}
}

@article{bellettini2007varifolds,
  title={A varifolds representation of the relaxed elastica functional},
  author={Bellettini, Giovanni and Mugnai, Luca},
  journal={Journal of Convex Analysis},
  volume={14},
  number={3},
  pages={543},
  year={2007},
  publisher={HELDERMANN VERLAG LANGER GRABEN 17, 32657 LEMGO, GERMANY}
}

@article{BellettiniDalMasoPaolini1993,
  author  = {Bellettini, Giovanni and Dal Maso, Gianni and Paolini, Maurizio},
  title   = {Semicontinuity and relaxation properties of a curvature depending functional in {2D}},
  journal = {Annali della Scuola Normale Superiore di Pisa, Classe di Scienze},
  volume  = {20},
  number  = {2},
  pages   = {247--297},
  year    = {1993}
}

@article{delladio1997special,
  title={Special generalized Gauss graphs and their application to minimization of functionals involving curvatures},
  author={Delladio, Silvano},
  journal={Journal fur die Reine und Angewandte Mathematik},
  pages={17--44},
  year={1997}
}

@article{DondlMugnaiRoger2011,
  author  = {Dondl, Patrick W. and Mugnai, Luca and R\"oger, Matthias},
  title   = {Confined elastic curves},
  journal = {SIAM Journal on Applied Mathematics},
  volume  = {71},
  number  = {6},
  pages   = {2205--2226},
  year    = {2011}
}

@article{Wojtowytsch2018,
    title = {Confined elasticae and the buckling of cylindrical shells},
    author = {Wojtowytsch, Stephan},
    pages = {555--587},
    volume = {14},
    number = {4},
    journal = {Advances in Calculus of Variations},
    year = {2021},
}

@article{DayrensMasnouNovaga2015,
  title={Existence, regularity and structure of confined elasticae},
  author={Dayrens, Francois and Masnou, Simon and Novaga, Matteo},
  journal={ESAIM: Control, Optimisation and Calculus of Variations},
  volume={24},
  number={1},
  pages={25--43},
  year={2018}
}

@article{MuellerRupp2021,
  title={A Li-Yau inequality for the 1-dimensional Willmore energy},
  author={M\"uller, Marius and Rupp, Fabian},
  journal={Advances in Calculus of Variations},
  volume={16},
  number={2},
  pages={337--362},
  year={2023},
  publisher={De Gruyter}
}

@article{MullerRoger2014,
  author  = {M\"uller, Stefan and R\"oger, Matthias},
  title   = {Confined structures of least bending energy},
  journal = {Journal of Differential Geometry},
  volume  = {97},
  number  = {1},
  pages   = {109--139},
  year    = {2014},
}

@article{Boue2006,
  author  = {Bou\'e, Laurent and Adda-Bedia, Mokhtar and Boudaoud, Arezki and Cassani, Davide and Couder, Yves and Eddi, Antonin and
             Trejo, Miguel},
  title   = {Spiral patterns in the packing of flexible structures},
  journal = {Physical Review Letters},
  volume  = {97},
  number  = {16},
  pages   = {166104},
  year    = {2006}
}

@article{Alben2021,
  title={Packing of elastic rings with friction},
  author={Alben, Silas},
  journal={Proceedings: Mathematical, Physical and Engineering Sciences},
  volume={478},
  number={2258},
  pages={1--20},
  year={2022}
}

@article{Deboeuf2024,
  author  = {Deboeuf, St\'ephanie and Proti\`ere, Suzie and Katzav, Eytan},
  title   = {Yin-yang spiraling transition of a confined buckled elastic sheet},
  journal = {Physical Review Research},
  volume  = {6},
  number  = {1},
  pages   = {013100},
  year    = {2024}
}

@article{Abramian2026,
  author  = {Abramian, Ana\"is and De Brion, Matthieu and Katzav, Eytan and Deboeuf, St\'ephanie},
  title   = {A force master curve for the wrinkle-to-spiral transition of confined sheets},
  journal = {Preprint hal-05663157},
  year    = {2026}
}

@article{Wang2026,
  author  = {Wang, Jiayu and Lachat, Sara and Abramian, Ana\"is and Deboeuf, St\'ephanie and Neukirch,
             S\'ebastien},
  title   = {Universal response curves for the confined elastica},
  journal={Journal of the Mechanics and Physics of Solids},
  year    = {2026},
  pages={106778},
  publisher={Elsevier}
}

@article{DeckelnickGrunau2007,
  author  = {Deckelnick, Klaus and Grunau, Hans-Christoph},
  title   = {Boundary value problems for the one-dimensional Willmore equation},
  journal = {Calculus of Variations and Partial Differential Equations},
  volume  = {30},
  number  = {3},
  pages   = {293--314},
  year    = {2007},
}

@article{MiuraYoshizawa2024,
  author  = {Miura, Tatsuya and Yoshizawa, Kensuke},
  title   = {Complete classification of planar $p$-elasticae},
  journal = {Annali di Matematica Pura ed Applicata},
  year    = {2024},
  note    = {arXiv:2203.08535},
}

@book{Newman1951,
  author    = {Newman, M. H. A.},
  title     = {Elements of the Topology of Plane Sets of Points},
  edition   = {Second},
  publisher = {Cambridge University Press},
  address   = {Cambridge},
  year      = {1951},
}

@article{Gale1979,
  author  = {Gale, David},
  title   = {The game of {H}ex and the {B}rouwer fixed-point theorem},
  journal = {Amer. Math. Monthly},
  volume  = {86},
  year    = {1979},
  number  = {10},
  pages   = {818--827},
}

@book{Hatcher2002,
  author    = {Hatcher, Allen},
  title     = {Algebraic Topology},
  publisher = {Cambridge University Press},
  address   = {Cambridge},
  year      = {2002},
}

@article{BourgainBrezisMironescu2000,
  author  = {Bourgain, Jean and Brezis, Ha{\"\i}m and Mironescu, Petru},
  title   = {Lifting in {S}obolev spaces},
  journal = {J. Anal. Math.},
  volume  = {80},
  year    = {2000},
  pages   = {37--86},
}

@article{Chakerian1962,
  author  = {Chakerian, Gulbank D.},
  title   = {An inequality for closed space curves},
  journal = {Pacific Journal of Mathematics},
  volume  = {12},
  number  = {1},
  pages   = {53--57},
  year    = {1962},
}

@incollection{Sullivan2008,
  author    = {Sullivan, John M.},
  title     = {Curves of finite total curvature},
  booktitle = {Discrete Differential Geometry},
  editor    = {Bobenko, Alexander I. and Schr\"oder, Peter and Sullivan, John M. and Ziegler, G\"unter M.},
  series    = {Oberwolfach Seminars},
  volume    = {38},
  pages     = {137--161},
  publisher = {Birkh\"auser},
  address   = {Basel},
  year      = {2008},
  note      = {arXiv:math/0606007},
}

@article{Fary1950,
  author  = {F\'ary, Istv\'an},
  title   = {Sur certaines in\'egalit\'es g\'eom\'etriques},
  journal = {Acta Universitatis Szegediensis. Acta Scientiarum Mathematicarum},
  volume  = {12},
  pages   = {117--124},
  year    = {1950},
}

@article{Miura2021,
  author  = {Miura, Tatsuya},
  title   = {Polar tangential angles and free elasticae},
  journal = {Mathematics in Engineering},
  volume  = {3},
  number  = {4},
  pages   = {Paper No. 034, 12 pp.},
  year    = {2021},
}

@article{CaffarelliFriedman1979,
  author  = {Caffarelli, Luis A. and Friedman, Avner},
  title   = {The obstacle problem for the biharmonic operator},
  journal = {Annali della Scuola Normale Superiore di Pisa, Classe di Scienze (4)},
  volume  = {6},
  number  = {1},
  pages   = {151--184},
  year    = {1979},
}

@article{DallAcquaDeckelnick2018,
  author  = {Dall'Acqua, Anna and Deckelnick, Klaus},
  title   = {An obstacle problem for elastic graphs},
  journal = {SIAM Journal on Mathematical Analysis},
  volume  = {50},
  number  = {1},
  pages   = {119--137},
  year    = {2018},
}

@article{Mueller2019,
  author  = {M\"uller, Marius},
  title   = {An obstacle problem for elastic curves: existence results},
  journal = {Interfaces and Free Boundaries},
  volume  = {21},
  number  = {1},
  pages   = {87--129},
  year    = {2019},
}

@article{Yoshizawa2021,
  author  = {Yoshizawa, Kensuke},
  title   = {A remark on elastic graphs with the symmetric cone obstacle},
  journal = {SIAM Journal on Mathematical Analysis},
  volume  = {53},
  number  = {2},
  pages   = {1857--1885},
  year    = {2021},
}

@article{DallAcquaMuellerOkabeYoshizawa2024,
  author  = {Dall'Acqua, Anna and M\"uller, Marius and Okabe, Shinya and Yoshizawa, Kensuke},
  title   = {An obstacle problem for the $p$-elastic energy},
  journal = {Calculus of Variations and Partial Differential Equations},
  volume  = {63},
  number  = {6},
  pages   = {Paper No. 145, 43 pp.},
  year    = {2024},
}

@article{Miura2016,
  author  = {Miura, Tatsuya},
  title   = {Singular perturbation by bending for an adhesive obstacle problem},
  journal = {Calculus of Variations and Partial Differential Equations},
  volume  = {55},
  number  = {1},
  pages   = {Art. 19, 24 pp.},
  year    = {2016},
}

@article{Miura2023,
  author  = {Miura, Tatsuya},
  title   = {{L}i--{Y}au type inequality for curves in any codimension},
  journal = {Calculus of Variations and Partial Differential Equations},
  volume  = {62},
  number  = {8},
  pages   = {Paper No. 216, 28 pp.},
  year    = {2023},
}

@article{LangerSinger1984,
  author  = {Langer, Joel and Singer, David A.},
  title   = {The total squared curvature of closed curves},
  journal = {Journal of Differential Geometry},
  volume  = {20},
  number  = {1},
  pages   = {1--22},
  year    = {1984},
}

@article{AvvakumovKarpenkovSossinsky2013,
  author  = {Avvakumov, Sergey and Karpenkov, Oleg and Sossinsky, Alexei},
  title   = {{E}uler elasticae in the plane and the {W}hitney--{G}raustein theorem},
  journal = {Russian Journal of Mathematical Physics},
  volume  = {20},
  number  = {3},
  pages   = {257--267},
  year    = {2013},
}

@article{MantegazzaPludaPozzetta2021,
  author  = {Mantegazza, Carlo and Pluda, Alessandra and Pozzetta, Marco},
  title   = {A survey of the elastic flow of curves and networks},
  journal = {Milan Journal of Mathematics},
  volume  = {89},
  number  = {1},
  pages   = {59--121},
  year    = {2021},
}

@article{BartelsWeyer2022,
  author  = {Bartels, S\"oren and Weyer, Pascal},
  title   = {Computing confined elasticae},
  journal = {Advances in Continuous and Discrete Models},
  volume  = {2022},
  pages   = {Paper No. 58},
  year    = {2022},
}

@article{GerlachReiterVonDerMosel2017,
  author  = {Gerlach, Henryk and Reiter, Philipp and von der Mosel, Heiko},
  title   = {The elastic trefoil is the doubly covered circle},
  journal = {Archive for Rational Mechanics and Analysis},
  volume  = {225},
  number  = {1},
  pages   = {89--139},
  year    = {2017},
}

@article{GerlachVonDerMosel2011,
  author  = {Gerlach, Henryk and von der Mosel, Heiko},
  title   = {What are the longest ropes on the unit sphere?},
  journal = {Archive for Rational Mechanics and Analysis},
  volume  = {201},
  number  = {1},
  pages   = {303--342},
  year    = {2011},
}

@article{BoueKatzav2007,
  author  = {Bou\'e, Laurent and Katzav, Eytan},
  title   = {Folding of flexible rods confined in {2D} space},
  journal = {Europhysics Letters (EPL)},
  volume  = {80},
  number  = {5},
  pages   = {54002},
  year    = {2007},
}

@article{StoopWittelHerrmann2008,
  author  = {Stoop, Norbert and Wittel, Falk K. and Herrmann, Hans J.},
  title   = {Morphological phases of crumpled wire},
  journal = {Physical Review Letters},
  volume  = {101},
  number  = {9},
  pages   = {094101},
  year    = {2008},
}

@article{DonatoGomesDeSouza2003,
  author  = {Donato, Cassia C. and Gomes, Marcelo A. F. and de Souza, Rubens E.},
  title   = {Scaling properties in the packing of crumpled wires},
  journal = {Physical Review E},
  volume  = {67},
  number  = {2},
  pages   = {026110},
  year    = {2003},
}

@article{VetterWittelStoopHerrmann2013,
  author  = {Vetter, Roman and Wittel, Falk K. and Stoop, Norbert and Herrmann, Hans J.},
  title   = {Finite element simulation of dense wire packings},
  journal = {European Journal of Mechanics A/Solids},
  volume  = {37},
  pages   = {160--171},
  year    = {2013},
}

@article{Odijk2006,
  author  = {Odijk, Theo},
  title   = {{DNA} confined in nanochannels: hairpin tightening by entropic depletion},
  journal = {The Journal of Chemical Physics},
  volume  = {125},
  number  = {20},
  pages   = {204904},
  year    = {2006},
}

@article{Chakerian1964,
  author  = {Chakerian, Gulbank D.},
  title   = {On some geometric inequalities},
  journal = {Proceedings of the American Mathematical Society},
  volume  = {15},
  number  = {6},
  pages   = {886--888},
  year    = {1964},
}
